\numberwithin{equation}{section}
\title[]{The Lip-lip condition on metric measure spaces}
\author{Jasun Gong}
\address{Jasun Gong \hfill\break\indent
Institute of Mathematics \hfill\break\indent
Aalto University \hfill\break\indent 
P.O. Box 11100  \hfill\break\indent
FI-00076 Aalto \hfill\break\indent
Finland
}
\email{jasun.gong@aalto.fi}
\date{14 August 2012}
\subjclass[2010]{53C23 (28A15, 30L05, 46E35, 58C20)}
\thanks{This research was supported by a grant from the Academy of Finland.}
\theoremstyle{plain}
\newtheorem{thm}{Theorem}[section]
\newtheorem{lemma}[thm]{Lemma}
\newtheorem{prop}[thm]{Proposition}
\newtheorem{cor}[thm]{Corollary}
\newtheorem{ques}[thm]{Question}
\theoremstyle{definition}
\newtheorem{defn}[thm]{Definition}
\newtheorem{rmk}[thm]{Remark}
\newtheorem{claim}[thm]{Claim}
\renewcommand{\d}{\delta}
\newcommand{\e}{\epsilon}
\renewcommand{\H}{\mathcal{L}}
\newcommand{\id}{\operatorname{id}}
\newcommand{\Lip}{\operatornamewithlimits{Lip}}
\newcommand{\lip}{\operatornamewithlimits{lip}}
\newcommand{\N}{\mathbb{N}}
\newcommand{\R}{\mathbb{R}}
\newcommand{\Tan}{{\rm Tan}}
\newcommand{\To}{\operatornamewithlimits{\longrightarrow}}
\newcommand{\U}{\Upsilon}
\newcommand{\wslim}{\operatornamewithlimits{{\rm w}^*{\rm lim}}}
\newcommand{\wsto}{\stackrel{*}{\rightharpoonup}}
\newcommand{\x}{\textbf{x}}
\begin{document}
\maketitle

\begin{abstract}
On complete metric spaces that support doubling measures, we show that the validity of a Rademacher theorem for Lipschitz functions can be characterised by Keith's ``Lip-lip'' condition.  Roughly speaking, this means that at almost every point, the infinitesmal behavior of every Lipschitz function is essentially independent of the scales used in the blow-up at that point.  Moreover, the doubling property can be further weakened to a local hypothesis on the measure; we also present results in this direction.

Our techniques of proof are new and may be of independent interest.  They include an explicit use of coordinate charts for measurable differentiable structures, as well as a blow-up procedure on Euclidean spaces that preserves Weaver derivations.
\end{abstract}


\section{Introduction} \label{sect_intro}

\subsection{Rademacher-type theorems on metric spaces}

A well-known theorem of Rademacher \cite{Rademacher} states that Lipschitz functions on $\R^n$ are almost everywhere differentiable with respect to Lebesgue measure.  In recent years, however, there has been much interest in differentiability properties for Lipschitz maps on general metric spaces. We focus here on the case where the source is a metric space, while the target remains Euclidean.

The study of generalised differentiability has deep connections to other parts of mathematics.  Consider, for instance, the problem of characterising metric spaces that allow bi-Lipschitz embeddings into a Euclidean space, which has been met with partial success by means of Rademacher-type theorems for such spaces.  This was first observed by Pansu \cite{Pansu} for Carnot groups, by Semmes \cite{Semmes:2} for certain classes of metric topological manifolds, and then by Cheeger \cite{Cheeger} for a large class of metric spaces without any a priori smoothness.
Moreover, the later work of Cheeger and Kleiner \cite{Cheeger:Kleiner:2, Cheeger:Kleiner:3} shows that such differentiability and non-embedding theorems also hold true for Lipschitz mappings with Banach space targets, which in turn lead to new counter-examples in theoretical computer science \cite{Goemans}, \cite{Lee:Naor}.  

It is therefore a topic of independent interest to study differentiability problems in their own right.  Similar to these embedding problems, one may inquire as to which metric spaces satisfy a Rademacher-type theorem with respect to some Radon measure, in which case the space is said to support a {\em measurable differentiable structure}.  For a precise formulation of this property, see Definition \ref{defn_MDS}.

As indicated before, Cheeger \cite{Cheeger} proved the existence of (non-degenerate) measurable differentiable structures for a large class of metric spaces, specifically those that support doubling measures and a weak Poincar\'e inequality in terms of upper gradients (in the sense of Heinonen and Koskela \cite{Heinonen:Koskela}).  
This was later extended by Keith \cite[Thm 2.3.1]{Keith}, where the Poincar\'e inequality is replaced by one of its implications, called the {\em Lip-lip condition}. 
Roughly speaking, it asserts that at almost every point, the infinitesmal behavior of every Lipschitz function is essentially independent of the scales used in the blow-up at that point.

\begin{thm}[Cheeger]
Let $(X,d)$ be a metric space and let $\mu$ be a doubling measure on $X$.  If $X$ supports a weak $p$-Poincar\'e inequality, for some $p \in [1,\infty)$, then it supports a measurable differentiable structure.
\end{thm}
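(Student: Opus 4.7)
The plan is to reduce to Keith's extension, cited immediately below the theorem, by establishing the Lip-lip inequality: for every Lipschitz function $f\colon X\to\R$,
\[
\Lip(f,x) \leq K\,\lip(f,x) \qquad \text{for $\mu$-a.e.\ } x\in X,
\]
where $K$ depends only on $p$, the doubling constant of $\mu$, and the weak Poincar\'e constants. Once the Lip-lip inequality is in hand, Keith's theorem delivers the measurable differentiable structure on $X$, so no further work constructing charts, verifying uniqueness of the differential, or bounding chart dimensions is needed.

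To derive the Lip-lip inequality, fix a Lipschitz $f$ and a point $x$ which is simultaneously a Lebesgue point of $(\lip f)^p$ (such $x$ form a set of full $\mu$-measure by Lebesgue differentiation on the doubling measure $\mu$). For each small $r>0$, I would apply the weak $p$-Poincar\'e inequality on $B(x,r)$ using an upper gradient $g$ of $f$ that majorises $\lip f$ only slightly, obtaining
\[
\frac{1}{\mu(B(x,r))}\int_{B(x,r)} |f - f_{B(x,r)}|\,d\mu \;\leq\; Cr\left(\frac{1}{\mu(B(x,\lambda r))}\int_{B(x,\lambda r)} g^{p}\,d\mu\right)^{1/p}
\]
which in turn is bounded by $Cr(\lip(f,x)+o(1))$ as $r\to 0^+$ by the Lebesgue point property. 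A standard telescoping argument over dyadic scales $B(x,2^{-k}r)$, in the spirit of Heinonen--Koskela, upgrades this integral control to the pointwise estimate $|f(y)-f(x)|\leq C'r(\lip(f,x)+o(1))$ for every $y\in B(x,r)$. Taking the supremum over such $y$ and the $\limsup$ as $r\to 0^+$ yields $\Lip(f,x)\leq C'\lip(f,x)$, i.e.\ the Lip-lip inequality with $K=C'$.

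The main obstacle is constructing the upper gradient $g$ whose $L^{p}$-integrals are controlled pointwise by $\lip f$: the function $\lip f$ is a $\liminf$ of supremum quotients and is not manifestly an upper gradient, while $\Lip f$ (which always is an upper gradient) would only yield the trivial bound $\Lip f\leq C\Lip f$ at Lebesgue points. Cheeger's original resolution is a discrete-gradient construction that exploits the doubling property and maximal function estimates in an essential way, and this is precisely where the final constant $K$ acquires its dependence on doubling and Poincar\'e data. The subsequent telescoping step is standard once such a $g$ is available, and the reduction to Keith's theorem then finishes the proof.
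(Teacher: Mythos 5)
The paper does not prove this theorem; it is cited as background from \cite{Cheeger}, so your proposal should be read as an alternative route rather than compared against a proof in the text. Your overall strategy --- establish a Lip-lip inequality from the Poincar\'e inequality and then invoke Keith's Theorem \ref{thm_keith} --- is logically sound; indeed, the paper itself records in \S\ref{sect_intro} that the $p$-Poincar\'e inequality with a doubling measure implies the Lip-lip condition with constant $M=1$ (Cheeger's Cor.\ 6.36). This route is genuinely different from Cheeger's original proof, which constructs the measurable differentiable structure directly via generalized linear functions and dimension bounds without passing through the Lip-lip condition.

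However, the sketch of the intermediate step (PI $\Rightarrow$ Lip-lip) has two problems, one small and one substantial.

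First, the obstacle you identify is not where the difficulty lies: $\lip f$ \emph{is} an upper gradient of $f$ for every Lipschitz $f$, on any metric space, with no Poincar\'e or doubling hypothesis. Along an arc-length parametrized rectifiable curve $\gamma$, the metric speed $|\gamma'|(t)=1$ at a.e.\ $t$, and combining this with the intermediate value theorem one shows $|(f\circ\gamma)'(t)|\le\lip[f](\gamma(t))$ a.e.; integrating gives the upper-gradient inequality. So there is no need for the ``discrete-gradient construction'' you attribute to Cheeger, nor to approximate $\lip f$ from above by a genuine upper gradient.

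Second, and more seriously, the ``standard telescoping argument'' does not give the claimed pointwise estimate. What the Poincar\'e inequality plus doubling plus telescoping at dyadic scales yields is
$$
|f(x)-f(y)| \;\le\; C\,d(x,y)\Big( \big(M_{\lambda d(x,y)}\,g^p(x)\big)^{1/p} + \big(M_{\lambda d(x,y)}\,g^p(y)\big)^{1/p} \Big),
$$
where $M_R$ denotes the restricted maximal function. The first term does converge to $g(x)=\lip[f](x)$ at Lebesgue points of $g^p$, but the second term is evaluated at $y$, and $\sup_{y\in B(x,r)} M_{\lambda d(x,y)} g^p(y)$ is not controlled by $g^p(x)$ as $r\to 0$ --- small balls centered at nearby points $y$ can carry arbitrarily large averages of $g^p$ even when $x$ is a Lebesgue point. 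Thus $\Lip[f](x)\le C\lip[f](x)$ does not follow from this chain of estimates. The fact that Cheeger's Cor.\ 6.36 is true at all rests on the deeper identification $\Lip[f]=g_f$ ($g_f$ the minimal $p$-weak upper gradient; Cheeger's Thm.\ 4.38), which requires the Cheeger-energy/relaxed-gradient machinery and is not a consequence of integral-to-pointwise upgrading alone. Once $\Lip[f]=g_f$ is known, combining it with the lemma that $\lip f$ is an upper gradient gives $\Lip[f]=g_f\le\lip[f]$ a.e., and then Keith's theorem finishes the argument; but this is no shortcut, since Thm.\ 4.38 is itself a substantial part of Cheeger's paper.
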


\begin{thm}[Keith] \label{thm_keith}
Let $(X,d)$ be a locally compact metric space and let $\mu$ be a doubling measure on $X$.
If $(X,d,\mu)$ satisfies a Lip-lip condition with constant $M \geq 1$, that is, for all Lipschitz functions $f : X \to \R$ the inequality
\begin{equation} \label{eq_Liplip}
\Lip[f](x) \;\leq\; M \, \lip[f](x)
\end{equation}
holds $\mu$-a.e.\ $x \in X$,
then it supports a measurable differentiable structure.
\end{thm}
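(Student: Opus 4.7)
The plan is to produce the charts of a measurable differentiable structure by an exhaustion procedure, using the Lip-lip inequality as the bridge between infinitesimal ``best approximation'' (in the weaker $\lip$ sense) and honest first-order differentiability (in the $\Lip$ sense). Given a candidate chart $\phi = (\phi_1, \ldots, \phi_n) : X \to \R^n$, the key quantity I would study is the defect
$$D_\phi(f)(x) \;:=\; \inf_{a \in \R^n}\, \lip[f - a \cdot \phi](x),$$
for every Lipschitz $f : X \to \R$. A pair $(U,\phi)$ will be a chart precisely when $D_\phi(f) = 0$ $\mu$-a.e.\ on $U$ for every Lipschitz $f$, because then applying Lip-lip to $g := f - a(x) \cdot \phi$ forces $\Lip[g](x) = 0$, which unfolds to
$$f(y) \;=\; f(x) + a(x) \cdot (\phi(y) - \phi(x)) + o(d(x,y))$$
as $y \to x$. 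So the task reduces to building, on $\mu$-a.a.\ of $X$, Lipschitz maps $\phi$ with $D_\phi \equiv 0$.

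To make such a construction terminate, I would first bound a priori the admissible dimension $n$. The seminorm $f \mapsto \lip[f](x)$ (equivalently $\Lip[f](x)$, up to the factor $M$) descends to a genuine norm on a quotient of the space of germs of Lipschitz functions at $x$. The doubling property should yield an upper bound $N = N(\mu, M)$ on the dimension of this quotient by a packing/separation argument at a blow-up of $X$ at $x$: if $n$ Lipschitz functions have $\lip$-independent germs at $x$, one produces exponentially many nearly disjoint balls at small scales, which contradicts doubling for $n$ large. This gives the cap needed to stop the exhaustion.

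The exhaustion itself runs as follows. Start with the empty chart. Whenever the current $\phi$ fails, i.e.\ $D_\phi(f) > 0$ on a set of positive measure for some Lipschitz $f$, pass to a $\mu$-density point $x_0$ of that set and adjoin to $\phi$ a suitable Lipschitz function $\psi$ extracted from the infinitesimal data at $x_0$ -- this is where the Weaver-derivation viewpoint announced in the abstract would enter, to identify a direction along which $f$ is not yet approximated by $\phi$. The resulting chart $\phi' = (\phi, \psi)$ satisfies $D_{\phi'} < D_\phi$ on a neighborhood of $x_0$. Since the chart dimension is capped by $N$, this process terminates on all but a null set after countably many steps, producing charts that cover $X$ up to a $\mu$-null set.

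The main obstacle is the reduction step: starting from $D_\phi(f)(x_0) > 0$, one must exhibit a Lipschitz $\psi$ such that adjoining $\psi$ \emph{strictly} lowers the defect on a set of positive measure around $x_0$, not merely at $x_0$ itself. A naive choice is local and need not interact well with the already chosen coordinates; the linear structure of the defect also must be preserved under blow-up. This is precisely what the Euclidean blow-up / Weaver derivation machinery should accomplish: it provides an infinitesimal linear model of Lipschitz functions at a.e.\ point, and transports a strict infinitesimal improvement back to a neighborhood in $X$ via the Lip-lip inequality. A secondary, more technical issue is the measurable selection of the coefficient vector $a(x)$ for each Lipschitz $f$; I would handle this by working with a countable dense family of Lipschitz functions and diagonalising, exploiting the finite dimensionality already established.
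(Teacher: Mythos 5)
The paper does not prove Theorem~\ref{thm_keith}; it is quoted from Keith \cite{Keith} as a black box and invoked later in the proof that (\ref{thm_converse}.B) implies (\ref{thm_converse}.A). So there is no ``paper's own proof'' against which to compare, and the relevant comparison is with Keith's argument.

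Your sketch has the right high-level shape --- a dimension bound from doubling, plus the Lip-lip inequality to upgrade $\lip$-approximation to differentiability --- but the gaps you flag are exactly where the theorem lives, and as written the argument does not close them. First, the exhaustion step is not correct as stated: you pass to a single $\mu$-density point $x_0$, adjoin $\psi$ to lower the defect ``on a neighborhood of $x_0$,'' and then conclude that ``this process terminates on all but a null set after countably many steps.'' The dimension cap limits the number of adjoinings at a \emph{fixed} point, but different points require different $\psi$'s, and nothing in the argument says a \emph{countable} set of neighborhoods suffices to cover $X$ up to a null set. Keith and Cheeger avoid this precisely by working from the outset with a fixed countable family of Lipschitz functions (e.g.\ distance functions to a countable dense set) and partitioning $X$ measurably into pieces where a specific finite subtuple is maximal with respect to a $\lip$-independence notion; maximality plus Lip-lip then handles an arbitrary $f$ by approximation. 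The measurable-selection issue you relegate to a ``secondary, more technical'' footnote is therefore load-bearing, not secondary. Second, the reduction step itself is left open: you appeal to ``Euclidean blow-up / Weaver derivation machinery'' to produce $\psi$ and to transfer a strict infinitesimal improvement back to a set of positive measure, but those are the tools this paper develops for the \emph{converse} of Theorem~\ref{thm_keith} (starting from an already-existing MDS); they are not available as inputs when you are trying to build the MDS from scratch. Until that step is replaced by an actual mechanism (the maximal-tuple argument, or Cheeger's generalized-linear-function bound), the proposal is an outline rather than a proof.
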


(See \S\ref{subsect_lipconst} for definitions of $\Lip[f](x)$ and $\lip[f](x)$, the upper and lower pointwise Lipschitz constants of $f$ at $x$, respectively.)

For doubling measures, our main result is essentially a converse to Theorem \ref{thm_keith}.  Up to a measurable partition on a metric space, the Lip-lip condition is actually {\em necessary} for measurable differentiable structures to exist on that space.  This also settles a previous question by the author \cite[Ques 1.11]{Gong_diffstruct}.

\begin{thm} \label{thm_converse}
Let $(X,d)$ be a complete metric space with a doubling measure $\mu$.  The following conditions are equivalent:
\begin{enumerate}
\item[(\ref{thm_converse}.A)] $(X,d,\mu)$ supports an $N$-dimensional measurable differentiable structure, for some $N \in \N$;
\vspace{.025in}
\item[(\ref{thm_converse}.B)] There is a collection of measurable subsets $\{Z_n\}_{n=1}^\infty$ of $X$ and a sequence $(M_n)_{n=1}^\infty$ in $[1,\infty)$ with 
$
\mu(X \setminus \bigcup_{n=1}^\infty Z_n) = 0
$
and each $(Z_n,d,\mu)$ satisfies a Lip-lip condition with constant $M_n$.
\end{enumerate}
\end{thm}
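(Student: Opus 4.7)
The direction $(\ref{thm_converse}.B) \Rightarrow (\ref{thm_converse}.A)$ is essentially Theorem \ref{thm_keith} applied piece-by-piece: Keith's argument yields a (local) measurable differentiable structure on each $Z_n$, and these can be glued into a global structure on $X$ using the countable cover together with standard measurable-selection arguments. Some care is needed because the doubling property on $X$ does not literally descend to each $Z_n$; however, since one only applies doubling through its consequences on balls of $X$ (not intrinsic balls of $Z_n$), inspection of Keith's proof shows that his machinery still goes through.

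For the substantive direction $(\ref{thm_converse}.A) \Rightarrow (\ref{thm_converse}.B)$, suppose $(X,d,\mu)$ supports an $N$-dimensional MDS and, without loss of generality, work inside a single chart $(U,\varphi)$ with Lipschitz map $\varphi : U \to \R^N$. For each Lipschitz $f : X \to \R$ the MDS provides a $\mu$-measurable differential $df : U \to \R^N$, and the upper estimate
\[
\Lip[f](x) \;\leq\; \Lip[\varphi](x)\cdot |df(x)| \qquad \mu\text{-a.e.\ } x \in U
\]
is automatic. The heart of the theorem is the matching lower bound: one wants a $\mu$-measurable function $C:U\to(0,\infty)$ such that for \emph{every} Lipschitz $f$,
\[
\lip[f](x) \;\geq\; C(x)\,|df(x)| \qquad \mu\text{-a.e.\ } x \in U.
\]
Granting this, set $Z_n := \{ x \in U : \Lip[\varphi](x)/C(x) \leq n \}$; the two inequalities combine to give $\Lip[f](x) \leq n \, \lip[f](x)$ $\mu$-a.e.\ on $Z_n$, uniformly in $f$. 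A density-point argument for the doubling measure $\mu$ then converts this $X$-intrinsic inequality into the Lip-lip condition on $(Z_n,d,\mu)$, since at $\mu$-a.e.\ density point $x$ of $Z_n$ the pointwise Lipschitz constants $\Lip_{Z_n}[f](x)$ and $\lip_{Z_n}[f](x)$ coincide with their counterparts computed in $X$.

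The main obstacle is thus the uniform lower bound $\lip[f](x) \geq C(x)|df(x)|$. This is where the new technique flagged in the abstract enters: at $\mu$-a.e.\ $x \in U$, I would blow up the pushforward measure $\varphi_*\mu$ on $\R^N$ at $\varphi(x)$, obtaining a tangent measure on the Euclidean target, under which $f$ rescales to the linear functional $v \mapsto df(x) \cdot v$. The crucial structural input is that Weaver derivations associated with the MDS survive this blow-up on the Euclidean target, allowing one to import the fully developed differentiation theory on $\R^N$. Coupled with the uniqueness clause of the MDS (which prevents any ``missing direction'' in the blow-up without creating a spurious non-zero differential), this forces the blow-up to be rich enough: for $\mu$-a.e.\ $x$ there are scales $r_n \downarrow 0$ and test points $y$ with $d(x,y)\leq r_n$ along which $|\varphi(y)-\varphi(x)|$ is comparable to $d(x,y)$ and the increment $df(x) \cdot (\varphi(y)-\varphi(x))$ realises the full magnitude $|df(x)|\cdot d(x,y)$, up to a uniform factor. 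This step is the technical core; the rest of the argument is the measure-theoretic bookkeeping sketched above.
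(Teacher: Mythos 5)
Your high-level strategy for $(\ref{thm_converse}.A)\Rightarrow(\ref{thm_converse}.B)$ --- reduce to the Euclidean target via pushforward, blow up the pushforward measure, and use the rank of tangent Weaver derivations to rule out degeneracy --- is indeed the route the paper takes, and the reformulation in terms of a measurable $C(x)$ with $\lip[f](x)\geq C(x)|df(x)|$ is just a reparametrisation of the paper's contradiction argument ("if the Lip-lip constant is unbounded on a positive-measure set, get a degenerate tangent measure"). But there is a concrete gap in the step you flag as the technical core, and it is exactly the step the paper devotes a separate lemma to.

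The problem is that blowing up $\varphi_\#\mu$ at $\varphi(x)$ cannot, by itself, give a lower bound on $\lip[f](x)$ in $X$. Since $\varphi$ is $1$-Lipschitz (for distance-function coordinates), one has $\varphi(B(x,\rho)) \subseteq B(\varphi(x),\rho)\cap\varphi(X)$ but in general \emph{not} equality: there may be many $y$ with $d(x,y)$ of order $\rho$ whose images $\varphi(y)$ lie very close to $\varphi(x)$ (the pushforward measure does not remember the fibers of $\varphi$). Consequently
$$
\lip[h\circ\varphi](x) \;\leq\; \lip[h]\big(\varphi(x)\big),
$$
and the inequality can be strict or even degenerate. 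So the tangent-measure argument on $\big(\R^N,\varphi_\#\mu\big)$ produces a lower bound on $\lip[h](\varphi(x))$, which is the \emph{larger} of the two quantities --- this is the wrong direction. Your assertion that the rank of tangent derivations "forces the blow-up to be rich enough" to produce test points $y$ with $|\varphi(y)-\varphi(x)|\asymp d(x,y)$ is unjustified: the pushforward measure literally carries no information about $d(x,\cdot)$, so nothing in $\Tan(\varphi_\#\mu,\varphi(x))$ can certify such points exist.

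What is needed (and what the paper does, in Lemma~\ref{lemma_pullback}) is to \emph{lift} the chart coordinates to $\R^{N+1}$ by appending the distance function $g(y)=d(x,y)$ as an extra coordinate, setting $\zeta(y):=(\xi(y),d(x,y))$. With this extra coordinate one gets the exact identity $\zeta(B(x,\rho))=\zeta(X)\cap Q_{N+1}(\zeta(x),\rho)$, and hence $\lip[h\circ\zeta](x)=\lip[h](\zeta(x))$ on the nose; only then does the tangent-measure/rank argument on the Euclidean target transfer back to a Lip-lip bound on $X$. The blow-up then has to be performed on $\zeta_\#\mu$ in $\R^{N+1}$ and subsequently projected back to $\R^N$ to invoke the rank-$N$ statement --- an additional slicing step your sketch does not anticipate. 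Without the lifting, your proposed lower bound $\lip[f](x)\geq C(x)|df(x)|$ simply does not follow from the blow-up of $\varphi_\#\mu$. (Your treatment of $(\ref{thm_converse}.B)\Rightarrow(\ref{thm_converse}.A)$ is fine in spirit; for the record, the precise reason Keith's theorem still applies is that $\mu\lfloor_{Z_n}$ is chunky in the sense of Definition~\ref{defn_chunky}, not doubling.)
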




As a related phenomenon, Cheeger \cite[Cor 6.36]{Cheeger} has shown that for metric spaces equipped with doubling measures, the validity of a (weak) $p$-Poincar\'e inequality for some $p \in [1,\infty)$ implies a Lip-lip condition with constant $M=1$.  This motivates the following question, as suggested to the author by J.\ Jaramillo.

\begin{ques}
Are there examples of metric measure spaces that satisfy a Lip-lip condition with a constant $M$ strictly greater than $1$?  More concretely, are there examples of measures on $\R^N$ whose supports satisfy a Lip-lip condition with a constant $M$ strictly greater than $1$?
\end{ques}

For $N \leq 2$, a weaker result holds:\ there is a universal constant for the Lip-lip condition.  In fact, for low-dimensional measurable differentiable structures on metric spaces, the (full) converse to Keith's theorem holds:

\begin{cor} \label{cor_converse}
Let $\mu$ be a doubling measure on a complete metric space $(X,d)$.  If $(X,d,\mu)$ supports an $N$-dimensional measurable differentiable structure, for $N \leq 2$, then $X$ satisfies a Lip-lip condition with constant $M=\sqrt{N+1}$.
\end{cor}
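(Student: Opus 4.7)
The plan is to invoke the implication (\ref{thm_converse}.A) $\Rightarrow$ (\ref{thm_converse}.B) of Theorem \ref{thm_converse}, which supplies a countable measurable partition $\{Z_n\}$ of a full-measure subset of $X$ together with local Lip-lip constants $M_n$, and then to sharpen these constants uniformly to $\sqrt{N+1}$ when $N \le 2$.  Granting such a sharpening on each $Z_n$, the Lip-lip inequality follows at every density point, hence at $\mu$-almost every point of $X$.  It therefore suffices to work in a single chart $(U,\phi)$ of the given $N$-dimensional measurable differentiable structure and to show that for $\mu$-a.e.\ $x_0 \in U$ and every Lipschitz $f : X \to \R$ one has $\Lip[f](x_0) \le \sqrt{N+1}\,\lip[f](x_0)$.

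First I would reduce the problem from arbitrary Lipschitz $f$ to linear combinations of chart coordinates.  At $\mu$-a.e.\ $x_0$ the defining property of the MDS produces $a = d_\phi f(x_0) \in \R^N$ with
\begin{equation*}
f(y) - f(x_0) \;=\; a \cdot (\phi(y)-\phi(x_0)) + o(d(y,x_0)),
\end{equation*}
and since the error contributes nothing to $\Lip[f](x_0)$ or $\lip[f](x_0)$, both quantities coincide with the corresponding ones for the linearization $L_a(y) := a \cdot \phi(y)$.  The task thus reduces to bounding the Lip-lip ratio of $L_a$ uniformly in $a \in \R^N$.

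Next I would pass to Euclidean blow-ups of the chart image.  With
\begin{equation*}
S_r \;:=\; \bigl\{ (\phi(y) - \phi(x_0))/r : y \in \overline{B}(x_0,r) \bigr\} \;\subset\; \R^N
\qquad \text{and} \qquad
h_r(a) \;:=\; \sup_{v \in S_r} |a \cdot v|,
\end{equation*}
each $h_r$ is a centrally symmetric seminorm on $\R^N$, bounded above by $\mathrm{Lip}(\phi)\,|a|$; the infinitesimal non-degeneracy of $N$-dimensional charts in an MDS supplies a matching lower bound $h_r(a) \ge c(x_0)\,|a|$ for all sufficiently small $r$.  The required inequality now reads
\begin{equation*}
\limsup_{r \to 0^+} h_r(a) \;\le\; \sqrt{N+1}\;\liminf_{r \to 0^+} h_r(a), \qquad a \in \R^N \setminus \{0\},
\end{equation*}
which in turn is equivalent to the uniform comparison $h(a) \le \sqrt{N+1}\,h'(a)$ between any two subsequential limits $h,h'$ of the family $\{h_r\}$.

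The heart of the argument is this Euclidean geometric comparison in dimensions $N \le 2$.  For $N = 1$ every $h_r$ is a positive multiple of $|\cdot|$, so the comparison reduces to a scalar oscillation estimate which the MDS structure delivers with the constant $\sqrt{2}$.  For $N = 2$ one would appeal to a planar extremal argument, combining a John-type normalization of the pair $(h,h')$ with the rigidity inherited from the tangent cone of $X$ at $x_0$, to extract the sharp constant $\sqrt{3}$.  I expect this geometric extremal problem to be the main obstacle in carrying out the plan, and it is precisely where the restriction $N \le 2$ enters:\ the planar extremal argument does not extend to $\R^N$ for $N \ge 3$, which matches the open state of Jaramillo's question in higher dimensions.
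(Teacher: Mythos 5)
Your plan has a genuine gap, and in fact a false claim sits at the heart of it.  You assert that ``the infinitesimal non-degeneracy of $N$-dimensional charts in an MDS supplies a matching lower bound $h_r(a) \ge c(x_0)\,|a|$ for all sufficiently small $r$.''  This is not what the MDS condition gives.  Inequality \eqref{eq_lipderiv} (via Remark \ref{rmk_lipderiv}) relates $|{\bf D}f(x)|$ only to $\Lip[f](x) = \limsup_{r\to 0}L(f;x,r)$, i.e.\ to $\limsup_{r\to 0} h_r(a)$; it says nothing about $\liminf_{r\to 0} h_r(a) = \lip[a\cdot\phi](x_0)$.  If the claimed two-sided bound on $h_r$ at small scales were true, the Lip-lip condition with a uniform constant would follow immediately in \emph{every} dimension, contradicting the discussion around Jaramillo's question and making Theorem \ref{thm_converse} trivial.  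The whole difficulty is precisely that, without additional information, the measure could concentrate on arbitrarily thin slabs through $x_0$ so that $\lip$ is much smaller than $\Lip$; this is the porosity phenomenon driving Proposition \ref{prop_euclconverse}.

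The ingredient you are missing, and which carries the entire content of the corollary, is the rigidity theorem from \cite[Thm 1.2]{Gong_rigidity}:\ when $N_m \le 2$, the pushforward of a doubling measure by chart coordinates satisfies $\xi^m_\#\mu \ll \H^{N_m}$.  Once absolute continuity is available, Lemma \ref{lemma_euclLiplip} gives the Lip-lip condition with $M=1$ on each $\xi^m(X_m)$, and the lifted-coordinate pullback (Lemma \ref{lemma_pullback}) then produces the extra factor $\sqrt{N+1}$.  Your proposed ``John-type normalization of the pair $(h,h')$'' and ``planar extremal argument'' are measure-free devices; they cannot distinguish the absolutely continuous case from a pathological one, and no compactness or convex-geometry normalization alone can close this gap.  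The restriction $N\le 2$ enters not through planar geometry but because the absolute-continuity theorem is proved only for $N\le 2$ (and is open or false in higher dimensions), which is exactly what leaves Jaramillo's question open for $N\ge 3$.
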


This result relies crucially on the fact that pushforwards of the underlying measure into $\R^2$ must be absolutely continuous to Lebesgue measure \cite[Thm 1.2]{Gong_rigidity}.  For $N=1$, this is now standard; see, for example, \cite[p.\ 15]{Ambrosio:Kirchheim}.

\subsection{The use of local coordinates}

Measurable differentiable structures naturally extend the notion of smooth structures on manifolds.  Unlike the latter case, however, Definition \ref{defn_MDS} is rarely handled with explicit coordinate charts.

Existence proofs for such structures on general metric spaces, as first shown by Cheeger \cite{Cheeger}, are often analytic (and non-constructive) in nature.  Specifically they involve dimension bounds for classes of ``quasi-linear'' or generalised harmonic functions on weak tangents of the space.  For Riemannian manifolds with non-negative Ricci curvature, the same phenomena was previously observed by Yau \cite{Yau}, Colding and Minicozzi \cite{Colding:Minicozzi:2,Colding:Minicozzi:1}, Li \cite{Li}, and others.  For subsequent analogues in the metric space setting, see \cite{Keith}, \cite{Kleiner}, \cite{Kleiner:Mackay}, and the recent work \cite{Schioppa}.

In contrast, the proof of Theorem \ref{thm_converse} handles measurable differentiable structures by way of ``passing to local coordinates.''  To obtain Lip-lip conditions from such structures, one pushes forward the relevant data to charts, proves the theorem for Euclidean spaces, and then pulls back the results to the underlying metric space.

The novelty here is that {\em injectivity of coordinate maps is not necessary} to the argument.  It suffices instead to choose chart coordinates appropriate to the metric space and in some cases, to ``lift'' them in order to better fit the geometry.  For the case of doubling measures, coordinates can be chosen as distance functions; this was first observed by Keith \cite{Keith_distcoord} for the case of spaces supporting Poincar\'e inequalities and by Schioppa \cite{Schioppa} for the general case.

\subsection{Differentiability, porosity, and tangents}

One crucial tool in proving Theorem \ref{thm_converse} is a new characterisation of measurable differentiable structures on metric spaces with doubling measures \cite[Thm 1.6]{Gong_diffstruct}.  As formulated in Theorem \ref{thm_MDSchar}, such structures are equivalent to the existence of generalised differential operators --- more precisely, the {\em (metric) derivations} of Weaver \cite{Weaver} --- that satisfy a local-to-global inequality.

So by passing to local coordinates, we show that Lip-lip conditions on Euclidean spaces are roughly opposite to {\em porosity} conditions for the support $S$ of the (pushforward) measure:\ that is, at every point $a \in S$ and every scale there are holes near $a$, lying in $\R^n \setminus S$, and of comparable diameter as the given scale.
The previous characterisation of measurable differentiable structures, now treated as directional differentiability, exploits this porosity by means of ``blow-ups'' at measure density points.  We note that the connection between differentiability and porosity in Euclidean spaces has been studied by Preiss and Zaj\'i\v{c}ek \cite{Preiss:Zajicek:2,Preiss:Zajicek}. 
More recently, Bate and Speight \cite{Bate:Speight} showed that measures on spaces supporting measurable differentiable structures (or {\em Lipschitz differentiability spaces}, in their terminology) cannot be porous; see also Lemma \ref{lemma_MDSptdoubling}.

To run the blow-up procedures mentioned above, we require the notion of a {\em tangent measure}  from geometric measure theory \cite{Marstrand,Preiss,Mattila}, as well as adaptations of the techniques from measurable differentiable structures for them.  In particular, we introduce the notion of {\em tangent derivations}, whose dimension (or rank) as a module is preserved under blowups.  

\subsection{Stronger characterisations of differentiability}

Very recently, the author has learned about a new result by Bate \cite[Thm 8.10]{Bate} which characterises metric spaces supporting a measurable differentiable structure with respect to a Radon measure.  It is important to note that the result {\em does not require any additional hypotheses on the underlying measure}.  In particular, this generalises Theorem \ref{thm_converse} and his methods are independent of ours.

As a difference in terminology, in \cite[Defn 10.3]{Bate} the Lip-lip condition is defined in terms of a countable Borel (measurable) decomposition of $X$, instead of over the entire space $X$.  To keep the discussion self-contained, we formulate his result below in the latter sense.

\begin{thm}[Bate] \label{thm_bate}
Let $(X,d)$ be a metric space and let $\mu$ be a Radon measure on $X$.  Then $(X,d,\mu)$ has a nondegenerate measurable differentiable structure if and only if both of the following conditions hold:
\begin{enumerate}
\item[(\ref{thm_bate}.A)] 
The measure $\mu$ is pointwise doubling, in that $\mu$-almost every $x_0 \in X$ satisfies
$$
\limsup_{r \to 0} \frac{\mu(B(x_0,r))}{\mu(B(x_0,\frac{r}{2}))} \;<\; \infty.
$$
\item[(\ref{thm_bate}.B)] 
There exist a sequence $\{\d_i\}_{i=1}^\infty$ in $\R^+$ and $\mu$-measurable subsets $\{X_i\}_{i=1}^\infty$ of $X$ with 
$\mu(X \setminus \bigcup_{i=1}^\infty X_i) = 0$
and so that each $X_i$ satisfies a Lip-lip condition of the form \eqref{eq_Liplip} with constant $M = \d_i$.
\end{enumerate}
\end{thm}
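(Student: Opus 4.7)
The plan is to treat the two implications separately and, in each direction, reduce to a statement already established in this paper by means of a further countable decomposition.

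For the forward direction, assume $(X,d,\mu)$ admits a nondegenerate $N$-dimensional measurable differentiable structure. Condition (\ref{thm_bate}.A) follows directly from Lemma \ref{lemma_MDSptdoubling}, since the Bate--Speight obstruction shows that differentiability precludes porosity of the underlying measure, and porosity is precisely what obstructs pointwise doubling $\mu$-almost everywhere. For condition (\ref{thm_bate}.B), I would adapt the proof of Theorem \ref{thm_converse}. In the absence of a global doubling hypothesis, the first step is to decompose $X$ into countably many measurable pieces on which (i) some chart map is uniformly bi-Lipschitz on small balls, (ii) the pointwise doubling ratios $\mu(B(x,r))/\mu(B(x,r/2))$ are uniformly bounded at sufficiently small scales, and (iii) the $\mu$-density of the piece is close to $1$. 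On each such piece one pushes the differentiability data forward to $\R^N$ via the chart, applies the Euclidean blow-up procedure for tangent derivations introduced in this paper, and reads off a Lip-lip constant from the rank of the resulting derivation module; pulling back to $X$ yields the desired decomposition.

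For the backward direction, assume (\ref{thm_bate}.A) and (\ref{thm_bate}.B). The key construction is a further Egorov-type refinement of the sets $X_i$: for each $j,k \in \N$ set
\[
X_{i,j,k} \;:=\; \left\{ x \in X_i \,:\, \frac{\mu(B(x,r))}{\mu(B(x,r/2))} \leq j \text{ for every } r \in (0,1/k) \right\}.
\]
By (\ref{thm_bate}.A), $\mu$-almost every point of $X_i$ lies in some $X_{i,j,k}$. On each such piece the measure is quantitatively doubling at small scales and continues to satisfy a Lip-lip inequality with constant $\d_i$, which is exactly the hypothesis needed to invoke a localised version of Keith's proof of Theorem \ref{thm_keith}: the maximal-function estimates, Vitali-type covering lemmas, and density-point constructions used there require only quantitative doubling on small balls, not on the entire space. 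This produces a chart structure on each $X_{i,j,k}$, and the countable disjoint collection of these charts assembles into a measurable differentiable structure on $X$.

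The main obstacle is the backward direction, specifically the localisation of Keith's proof. Although the adaptation is plausible because the analytic tools involved are inherently local in scale, verifying that every appeal to global doubling in Keith's construction can genuinely be replaced by the small-scale version on $X_{i,j,k}$ requires a careful pass through his argument, in particular through the step where weak tangents are constructed and generalised linearity is extracted. Once that localisation is in place, amalgamating the per-piece differentiable structures is routine, since the defining properties of a measurable differentiable structure pass under countable measurable disjoint unions.
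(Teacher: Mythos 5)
Your forward direction has a genuine gap at the pullback step.  Condition~(\ref{thm_bate}.A) is correctly handled by Lemma~\ref{lemma_MDSptdoubling}.  For condition~(\ref{thm_bate}.B), however, your decomposition step~(i), requiring that ``some chart map is uniformly bi-Lipschitz on small balls,'' is not available from the MDS hypothesis alone.  Chart coordinates in a measurable differentiable structure are only Lipschitz and are in general non-injective; indeed, the distance-function coordinates \eqref{eq_distcoords} that the paper uses throughout (and which Corollary~\ref{cor_distcoords} shows exist without any doubling hypothesis) are fold maps.  The paper explicitly singles this out as the obstacle --- ``injectivity of coordinate maps is not necessary to the argument'' --- and circumvents it not by assuming bi-Lipschitz charts but by the lifting trick of Lemma~\ref{lemma_pullback}: one augments the chart $\xi$ with the extra coordinate $g(y)=d(x,y)$ to form $\zeta=(\xi,g)$, and the ball-to-cube identity \eqref{eq_ballcube}, namely $\zeta(B(x,\rho))=\zeta(X)\cap Q_{N+1}(\zeta(x),\rho)$, is what permits the comparison of $\lip[h\circ\zeta](x)$ and $\Lip[h\circ\zeta](x)$ with $\lip[h](\zeta(x))$ and $\Lip[h](\zeta(x))$.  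Without some substitute for \eqref{eq_ballcube}, the Euclidean Lip-lip estimate obtained from Proposition~\ref{prop_euclconverse} cannot be pulled back to $X$.  If bi-Lipschitz charts were freely available the whole problem would collapse; obtaining them requires either the theorem you are trying to prove or separate deep input, so this step is either circular or unjustified.

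Your backward direction, by contrast, is essentially the right argument: decompose via the pointwise doubling condition into pieces $X_{i,j,k}$ on which both the metric subspace and the restricted measure are quantitatively doubling at small scales, then apply Keith's theorem (in its chunky-measure form, with Lemma~\ref{lemma_ptdoubling} supplying exactly the required local doubling) to each piece and amalgamate.  This matches the strategy implicit in the paper (cf.\ the proof of (\ref{thm_converse}.B)~$\Rightarrow$~(\ref{thm_converse}.A) together with Lemma~\ref{lemma_relchunky}), so no further comment there.  To repair the forward direction, replace your step~(i) with Corollary~\ref{cor_distcoords} (distance-function coordinates) and the lifted map $\zeta$ of Lemma~\ref{lemma_pullback}; the remaining changes the paper makes --- establishing local finiteness of $\xi^m_\#\mu$ via distance coordinates in Lemma~\ref{lemma_MDSpushfwd}, and replacing global doubling by the countable decomposition \eqref{eq_doublingpieces} in the Vitali/partition-of-unity steps of Lemma~\ref{lemma_pushfwd} --- are then bookkeeping, since Theorem~\ref{thm_tanderiv} and Proposition~\ref{prop_euclconverse} already work for arbitrary Radon measures.
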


Inspired by Bate's work, we also discuss how the proof of Theorem \ref{thm_converse} extends to show the same result.  We also show a stronger characterisation of measurable differentiable structures in terms of derivations, given later as Proposition \ref{prop_MDSderivs}.

\subsection{Plan of the paper and acknowledgments}
Section \S\ref{sect_prelims} reviews standard facts about doubling measures, Lipschitz functions, and measurable differentiable structures; experts can skip this part.  We discuss derivations in Section \S\ref{sect_derivs} and give a quick proof of (\ref{thm_converse}.B) $\Rightarrow$ (\ref{thm_converse}.A).  Here we also discuss tangent measures from  geometric measure theory and present a new construction for derivations with respect to them.

Section \S\ref{sect_eucl} contains a version of Theorem \ref{thm_converse} for Euclidean spaces and its proof; the key step lies in reducing the class of admissible functions for the Lip-lip condition, thereby reducing it to a geometric problem.  The case of metric spaces with doubling measures is treated in Section \S\ref{sect_metric}, where we implement the idea of passing to local coordinates.  Section \S\ref{sect_bate} is a short appendix, where we discuss Bate's theorem and prove a new characteristion for measurable differentiable structures.

The author would like to thank David Bate, Estibalitz Durand Cartagena, Juha Kinnunen, and Marta Szuma\'nska for their helpful comments, as well as Lizaveta Ihnatsyeva, Jes\'us A.\ Jaramillo, and Pekka Pankka for discussions that led to improvements in this work.  He lastly acknowledges the organisers of the 6th ECM Satellite Conference on Fourier Analysis and Pseudo-Differential Operators, held at Espoo, Finland in June 2012, who provided a hospitable setting that facilitated some of these discussions.

\section{Preliminaries} \label{sect_prelims}

Here and everywhere, $(X,d)$ always denotes a metric space.  When the metric is understood, we write $X = (X,d)$.  A {\em metric measure space} $(X,d,\mu)$ simply refers a metric space $(X,d)$ with a fixed choice of a Radon measure $\mu$, that is:\ $\mu$ is Borel regular and all balls with positive radius have finite, positive $\mu$-measure.

\subsection{Lipschitz functions}

The Lipschitz constant of $f: X \to \R$ is denoted as
$$
L(f) \;:=\; \sup\left\{ \frac{|f(y) - f(x)|}{d(x,y)} \,:\, x \neq y \text{ in } X \right\}.
$$
and we will consider various classes of such functions:
\begin{eqnarray*}
\Lip(X) &:=& \{ f : X \to \R \,;\, L(f) < \infty \} \\
{\Lip}_b(X) &:=& \{ f \in \Lip(X) \,;\, \|f\|_\infty < \infty \}.
\end{eqnarray*}
We now recall that $\Lip_b(X)$ is not only a Banach space, but a dual Banach space \cite{Arens:Eells}; see also \cite{Weaver:alg}.

\begin{lemma}[Arens-Eells] \label{lemma_dualbanach}
If $X$ is a metric space, then 
$\Lip_b(X)$ is (isometrically isomorphic to) a dual Banach space with respect to the norm
$$
\|f\|_{\Lip} \;:=\; \max\{ L(f), \|f\|_\infty \}.
$$
Moreover, on bounded subsets of $\Lip_b(X)$, the topology of weak-$*$ convergence agrees with that of pointwise convergence.
\end{lemma}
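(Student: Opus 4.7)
The plan is to realise $\Lip_b(X)$ as a weak-$*$ closed subspace of an $\ell^\infty$-space, which automatically exhibits it as a dual Banach space. First I would check that $\|\cdot\|_{\Lip}$ makes $\Lip_b(X)$ a Banach space (the two pieces $L(f)$ and $\|f\|_\infty$ are each seminorms, and completeness follows because a Cauchy sequence converges uniformly on $X$, and passing to limits in the difference-quotient bound preserves the Lipschitz constant).

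Next, set $I = X \sqcup \{(x,y) \in X \times X : x \neq y\}$ and define
\[
\Phi : \Lip_b(X) \longrightarrow \ell^\infty(I), \qquad
\Phi(f)(x) = f(x), \quad \Phi(f)(x,y) = \frac{f(y)-f(x)}{d(x,y)}.
\]
A direct computation shows $\|\Phi(f)\|_{\ell^\infty(I)} = \max\{L(f), \|f\|_\infty\} = \|f\|_{\Lip}$, so $\Phi$ is a linear isometry. Now $\ell^\infty(I)$ is the dual of $\ell^1(I)$, and the image $\Phi(\Lip_b(X))$ is cut out by the linear relations
\[
\varphi(x,y) \,d(x,y) \;=\; \varphi(y) - \varphi(x) \qquad \bigl((x,y) \in I\setminus X\bigr),
\]
each of which is a weak-$*$ continuous condition, being a finite combination of point evaluations. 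Hence the image is weak-$*$ closed in $\ell^\infty(I)$. By the standard duality principle (a weak-$*$ closed subspace of a dual Banach space is itself canonically a dual, with predual the quotient of the original predual by the annihilator), $\Phi(\Lip_b(X))$ is isometrically a dual Banach space, and therefore so is $\Lip_b(X)$.

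For the second assertion, suppose $(f_n)$ is a bounded net in $\Lip_b(X)$, say $\|f_n\|_{\Lip} \leq C$. If $f_n \to f$ pointwise on $X$, then for each fixed $x, y \in X$ with $x\neq y$ one also has $\Phi(f_n)(x,y) \to \Phi(f)(x,y)$, so $\Phi(f_n) \to \Phi(f)$ pointwise on $I$. Since $\{\Phi(f_n)\}$ is bounded in $\ell^\infty(I)$ and point evaluations span a dense subspace of $\ell^1(I)$, a standard density argument gives weak-$*$ convergence $\Phi(f_n) \wsto \Phi(f)$, which transfers back to weak-$*$ convergence $f_n \wsto f$ in $\Lip_b(X)$ via the predual quotient. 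Conversely, weak-$*$ convergence implies convergence when tested against the (continuous linear) point evaluations $f \mapsto f(x) = \Phi(f)(x)$, hence pointwise convergence on $X$.

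The main subtle point is identifying the predual implicitly via the weak-$*$ closed embedding rather than constructing the Arens--Eells (Lipschitz-free) space by hand; the density of point-evaluations in $\ell^1(I)$ is what ensures that testing on these elements suffices to characterise weak-$*$ convergence on bounded sets.
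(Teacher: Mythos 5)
Your argument is correct, and it takes a genuinely different route from the one the paper leans on. You realise $\Lip_b(X)$ as a weak-$*$ closed subspace of $\ell^\infty(I)$ cut out by the relations $\varphi(x,y)d(x,y)=\varphi(y)-\varphi(x)$, and then invoke the standard fact that a weak-$*$ closed subspace $M\subseteq E^*$ is isometrically $(E/M_\perp)^*$, with the quotient weak-$*$ topology agreeing with the relative one; the second assertion then falls out because point evaluations on $I$ lie in $\ell^1(I)$ and are dense there. All of this checks out, including the isometry $\|\Phi(f)\|_{\ell^\infty(I)}=\|f\|_{\Lip}$ and the three-epsilon density step for bounded nets. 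The paper, by contrast, cites Arens--Eells/Weaver and, in \S\ref{subsubsect_predual}, builds an \emph{explicit} predual: after truncating the metric to $d_2=\min\{d,2\}$ and adjoining a base point $e$, it identifies $\Lip_b(X)\cong\Lip_0(X_2^+)\cong[AE(X_2^+)]^*$ where $AE(X_2^+)$ is the Arens--Eells space generated by ``molecules'' $\sum_i a_i(\chi_{\{x_i\}}-\chi_{\{y_i\}})$. Your construction produces the predual only implicitly as $\ell^1(I)/M_\perp$, which is shorter and avoids the $d_2$ truncation; the paper's explicit molecule description, however, is used concretely later in the proof of Claim \ref{claim_limitsoflimits}, where one approximates a general predual element $v$ by a finitely supported molecule $\tilde v$ and then estimates term by term. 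So both proofs are valid; yours is the slicker abstract argument, while the paper's gives the hands-on predual that the rest of the argument actually requires.
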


In order to exploit properties of the weak-star topology later, we now study an explicit predual space.  The discussion below essentially follows \cite[Chap.\ 2]{Weaver:alg}.

\subsubsection{A predual of $\Lip_b(X)$} \label{subsubsect_predual}
Given a metric space $X = (X,d)$, the function
$$
d_2(x,y) \;:=\; \min\{d(x,y),2\}
$$
is also a metric on $X$, which we write as $X_2 := (X,\rho_2)$.  By abstractly extending the space by one more point, written $X_2^+ \;:=\; X_2 \cup \{ e\}$, the metric also extends:
$$
d_2^+(x,y) \;:=\;
\begin{cases}
d_2(x,y), & \text{for } x \neq e \text{ and } y \neq e \\
2, & \text{for } x \neq y = e.
\end{cases}
$$
By \cite[Prop 1.7.1 \& 1.7.2]{Weaver:alg}, the space $\Lip_b(X)$ is isometrically isomorphic to
$$
{\Lip}_0(X_2^+) \;=\;
\{ f \in \Lip(X_2^+) \,;\, f(e) = 0 \}
$$
which is equipped with the Lipschitz constant (functional) as a norm:
\begin{equation} \label{eq_weirdnorm}
f \;\mapsto\; L(f|_{X_2^+}) \;:=\; 
\sup\left\{ \frac{|f(y)-f(x)|}{d_2^+(x,y)} \,;\, x,y \in X_2^+, x \neq y \right\}.
\end{equation}
It is clear that the inclusion map $f \in \Lip_b(X) \mapsto f \in \Lip_0(X_2^+)$ is well-defined.

Now define $\widetilde{AE}(X_2^+)$ as the set of so-called ``molecules'' \cite[Defn 2.2.1]{Weaver:alg} on $X_2^+$, i.e.\ functions supported on finite sets and of the form
\begin{equation} \label{eq_molecule}
m \,:=\,
\sum_{i=1}^n a_i (\chi_{\{x_i\}} - \chi_{\{y_i\}}), 
\end{equation}
for $(a_i)_{i=1}^n \subset \R$ and $(x_i)_{i=1}^n, (y_i)_{i=1}^n \subset X_2^+$. It admits a norm \cite[Cor 2.2.3(b)]{Weaver:alg}
$$
\|m\|_{AE} \;:=\; \inf\left\{
\sum_{i=1}^n |a_i| d_2^+(x_i,y_i) \,;\,
m \,=\, \sum_{i=1}^n a_i (\chi_{\{x_i\}} - \chi_{\{y_i\}})
\right\}
$$
and the {\em Arens-Eells space} $AE(X_2^+)$ of $X_2^+$ is defined as the norm-completion of $\widetilde{AE}(X_2^+)$ with respect to $\|\cdot\|_{AE}$.  It is thus a fact \cite[Thm 2.2.2]{Weaver:alg} that
\begin{equation} \label{eq_predual}
\big[ AE(X_2^+) \big]^* \;\cong\; {\Lip}_0(X_2^+) \;\cong\; {\Lip}_b(X)
\end{equation}
where the duality for $m \in \tilde{AE}(X_2^+)$ and $f \in \Lip_b(X)$ is given by
$$
\langle f, m \rangle \;:=\; \sum_{x \in X_2^+} m(x) \, f(x).
$$

\subsection{Differentiability on metric spaces} 
\label{subsect_lipconst}

Motivated by Rademacher's theorem, we now give a precise notion of differentiability on metric spaces.

\begin{defn} \label{defn_MDS}
Let $(X,d)$ be a metric space with a Radon measure $\mu$.

\begin{enumerate}
\vspace{.05in}
\item[(\ref{defn_MDS}.A)]
Let $\xi : X \to \R^N$ be Lipschitz and let $x \in X$.  
A function $f \in \Lip(X)$
is {\em differentiable at $x$ with respect to $\xi$} if there exists ${\bf v} \in \R^N$ so that
$$
\limsup_{y \to x} \frac{|f(y) - f(x) - {\bf v} \cdot\{ \xi(y) - \xi(x)\} |}{d(x,y)} \;=\; 0.
$$
\item[(\ref{defn_MDS}.B)]
A measurable subset $Y \subseteq X$ is a {\em chart (of differentiability)} if $\mu(Y) > 0$ and if there is a Lipschitz map $\xi: Y \to \R^N$,  called {\em (a choice of) coordinates} on $Y$,
with the following property:\ for every 
$f \in \Lip(X)$,
there is a unique measurable vectorfield ${\bf D}f : Y \to \R^N$ so that $f$ is differentiable at $\mu$-a.e.\ $x \in Y$, with ${\bf v} = {\bf D}f(x)$.

\vspace{.05in}
\item[(\ref{defn_MDS}.C)]
A space $(X,d,\mu)$ has a {\em measurable differentiable structure} (or {\em MDS}) if there is a collection of charts $\{X_m\}_{m=1}^\infty$, with coordinates $\xi^m : X \to \R^{N_m}$, so that
$$
\mu\Big( X \setminus \bigcup_{i=1}^\infty X_m \Big) \;=\; 0 
$$
in which case $\{(X_m,\xi^m)\}_{m=1}^\infty$ is called an {\em atlas} of $X$ and the associated vectorfields,  denoted by ${\bf D}^mf$, are called {\em measurable differentials} of $f$.  

Moreover, such a structure is called {\em $N$-dimensional} if $N = \sup_m N_m$ and it is {\em nondegenerate} if $N_m \geq 1$ holds for all $m \in \N$.
\end{enumerate}
\end{defn}

\begin{rmk}
For $N \in \N$, condition (\ref{defn_MDS}.C) is also known as a {\em strong measurable differentiable structure} in \cite{Keith}.
In contrast to other discussions \cite{Kleiner:Mackay}, \cite{Schioppa}, Definition \ref{defn_MDS} allows for infinite-dimensional measurable differentiable structures, or $N = \infty$, though each chart $X_m$ must still have a fixed dimension $N_m \in \N \cup \{0\}$.  Such spaces are also called {\em (Lipschitz) differentiability spaces} in \cite{Bate:Speight}, \cite{Bate}.
\end{rmk}

Related to the notion of measurable differentiable structure, the {\em variation} of $f: X \to \R$ at $x \in X$ is defined as
$$
L(f;x,r) \;:=\; \sup\Big\{ \frac{|f(y) - f(x)|}{r} \,:\, y \in \bar{B}(x,r) \Big\}
$$
and the {\em lower} and {\em upper pointwise Lipschitz constants} of $f$ at $x$ are defined as
\begin{eqnarray*}
\lip[f](x) &:=& \liminf_{r \to 0} L(f;x,r) \\
\Lip[f](x) &:=& \limsup_{r \to 0} L(f;x,r) \;=\; \limsup_{y \to x} \frac{|f(y) - f(x)|}{d(x,y)}.
\end{eqnarray*}
For $X = (\R^n,|\cdot|,\H^n)$, we have $\Lip[f](x) = |\nabla f(x)|$ whenever $\nabla f(x)$ is well-defined. 

\begin{rmk} \label{rmk_lipderiv1side}
Regarding differentiability and pointwise Lipschitz constants, first fix a Lipschitz map $\xi: X \to \R^N$.
\begin{enumerate}
\item[(\ref{rmk_lipderiv1side}.A)]
The differentiability of $f \in \Lip(X)$ at a point $x \in X$ with respect to $\xi$, in the sense of (\ref{defn_MDS}.A), is equivalent to the condition that
$$
\Lip[f - {\bf D}f(x) \cdot \xi](x) \;=\; 0.
$$
\item[(\ref{rmk_lipderiv1side}.B)]
Note that $f \mapsto \Lip[f](y)$ is a semi-norm when $y \in X$ is fixed.  It follows that if $f \in \Lip(X)$ is differentiable at $x \in X$ with respect to $\xi$, again in the sense of (\ref{defn_MDS}.A), then the following inequality holds:
$$
\Lip[f](x) \;\leq\;
L(\xi) \, |{\bf D}f(x)|.
$$
\end{enumerate}
\end{rmk}

%

Recalling Keith's theorem, the Lip-lip condition \eqref{eq_Liplip} with respect to a doubling measure on a metric space guarantees the existence of an MDS on that space.  Towards Theorem \ref{thm_converse}, however, we begin with spaces supporting such structures and then give a simpler criterion for checking the Lip-lip condition on them. 

\begin{lemma} \label{lemma_liplinear}
Let $\mu$ be a Radon measure on a metric space $(X,d)$ that satisfies the Lebesgue differentiation theorem.  If $(Y,\xi)$ is a chart of differentiability for $(X,d,\mu)$, then the following conditions are equivalent:\
\begin{enumerate}
\item[(\ref{lemma_liplinear}.A)] the subspace $(Y,d,\mu)$ satisfies the Lip-lip condition;
\item[(\ref{lemma_liplinear}.B)] inequality \eqref{eq_Liplip} holds $\mu$-a.e.\ on $Y$ for the sub-class of Lipschitz functions 
$$
\{ \ell \circ \xi \,:\, \ell : \R^{N_m} \to \R \, \text{ is affine } \}.
$$
\end{enumerate}
Moreover, the constants for \eqref{eq_Liplip} depend only on the chart $Y$.
\end{lemma}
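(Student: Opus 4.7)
The implication (\ref{lemma_liplinear}.A) $\Rightarrow$ (\ref{lemma_liplinear}.B) is immediate, since (\ref{lemma_liplinear}.B) restricts the Lip-lip inequality to a sub-class of Lipschitz functions. The substantive direction is (\ref{lemma_liplinear}.B) $\Rightarrow$ (\ref{lemma_liplinear}.A), and the plan is to exploit the chart structure: at almost every point of $Y$, any $f \in \Lip(X)$ agrees infinitesimally with $\ell_x \circ \xi$, where $\ell_x$ is the affine function $z \mapsto {\bf D}f(x) \cdot z$. Thus the Lip-lip inequality for such specific affine composites transfers to $f$ itself. The obstacle is that $\ell_x$ varies measurably with $x$, so a single application of (\ref{lemma_liplinear}.B) does not suffice; I will handle this by a countable dense approximation in the space of affine maps, using the semi-norm property of $\Lip$ and $\lip$ to pass to the limit.

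Fix $f \in \Lip(X)$. By the chart property, on a full-measure subset $Y_f \subseteq Y$ the measurable differential ${\bf D}f(x) \in \R^{N_m}$ exists, and by Remark \ref{rmk_lipderiv1side}, we have
$$
\Lip\!\big[f - {\bf D}f(x) \cdot \xi \big](x) \;=\; 0.
$$
Since $f \mapsto \Lip[f](y)$ is a semi-norm, this identity gives $\Lip[f](x) = \Lip[{\bf D}f(x) \cdot \xi](x)$. For $\lip$, note that $L(f - {\bf D}f(x) \cdot \xi;\,x,r) \to 0$ as $r \to 0$, and combining this with the subadditivity of $L(\cdot;x,r)$ and passing to $\liminf_{r \to 0}$ yields $\lip[f](x) = \lip[{\bf D}f(x) \cdot \xi](x)$ as well.

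Next, choose a countable dense subset $\{v_k\}_{k \in \N} \subseteq \R^{N_m}$ and set $\ell_k(z) := v_k \cdot z$. By (\ref{lemma_liplinear}.B), each function $\ell_k \circ \xi$ satisfies the Lip-lip inequality (with constant $M$) on a full-measure subset $Z_k \subseteq Y$. Define $Y^\ast := Y_f \cap \bigcap_k Z_k$, which also has full measure in $Y$. For any two vectors $v, w \in \R^{N_m}$, the identity $\Lip[(v - w)\cdot \xi](x) \leq L(\xi)|v-w|$ (and similarly for $\lip$) together with the semi-norm property gives
$$
\big| \Lip[v \cdot \xi](x) - \Lip[w \cdot \xi](x) \big| \;\leq\; L(\xi)\,|v - w|,
$$
with the same estimate for $\lip$. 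Hence both $v \mapsto \Lip[v \cdot \xi](x)$ and $v \mapsto \lip[v \cdot \xi](x)$ are Lipschitz continuous on $\R^{N_m}$, uniformly in $x$.

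Now fix $x \in Y^\ast$ and let $(v_{k(n)})_n$ approximate ${\bf D}f(x)$. Applying (\ref{lemma_liplinear}.B) along the sequence and passing to the limit via the uniform continuity above, we conclude
$$
\Lip\!\big[{\bf D}f(x)\cdot \xi\big](x) \;\leq\; M\, \lip\!\big[{\bf D}f(x) \cdot \xi\big](x),
$$
and combining with the first step yields $\Lip[f](x) \leq M\, \lip[f](x)$ at $\mu$-a.e.\ $x \in Y$. The constant obtained is exactly $M$, which only depends on the chart $Y$; the hypothesis that $\mu$ satisfies Lebesgue differentiation is used only to ensure that countable intersections of full-measure sets have full measure and that the chart-wise differential ${\bf D}f$ is well-defined $\mu$-a.e.
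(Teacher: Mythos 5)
Your proof is correct and uses the paper's core idea — transfer the Lip-lip inequality from $f$ to the affine composite ${\bf D}f(x)\cdot\xi$ by combining differentiability (vanishing of $L(f-{\bf D}f(x)\cdot\xi;x,r)$ as $r\to 0$) with subadditivity of $L(\cdot\,;x,r)$ — but fills in a measurability step the paper's brief proof leaves implicit: because the affine direction ${\bf D}f(x)$ varies with $x$, condition (\ref{lemma_liplinear}.B) cannot be applied pointwise without first intersecting the full-measure sets over a countable dense family of directions and passing to the limit via the uniform Lipschitz continuity of $v\mapsto\Lip[v\cdot\xi](x)$ and $v\mapsto\lip[v\cdot\xi](x)$, as you do, yielding the cleaner constant $M$ rather than the $M\sqrt{N}$ asserted in the paper. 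One small caveat: $\lip[\cdot](x)$ is not itself a semi-norm (the $\liminf$ is only superadditive), so the estimate $|\lip[v\cdot\xi](x)-\lip[w\cdot\xi](x)|\leq L(\xi)|v-w|$ should be obtained from the bound $|L(v\cdot\xi;x,r)-L(w\cdot\xi;x,r)|\leq L(\xi)|v-w|$ valid for each $r>0$ before taking $\liminf_{r\to 0}$, rather than from a reverse triangle inequality for $\lip$ directly; the conclusion is unchanged.
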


\begin{proof}
Clearly (\ref{lemma_liplinear}.A) implies (\ref{lemma_liplinear}.B), with the same constant $M$.  For the other direction, fix $\e > 0$ and choose a sequence $(r_j)_{j=1}^\infty \subset \R^+$ with $r_j \searrow 0$ and so that
$$
\lim_{j \to 0} L(f;y,r_j) \;\leq\; \lip[f](y) \;+\; \e
$$
holds.  Using Definition \ref{defn_MDS}, we then estimate
\begin{equation} \label{eq_lipsequence}
\left.
\hspace{.6in}
\begin{split}
\lip[f](y) + \e &\;\geq\;
\lim_{j \to \infty} L(f;y,r_j) \\ &\;=\;
\lim_{j \to \infty} \left\{
L(f;y,r_j) +
L\big( f - {\bf D}f(y) \cdot \xi; y, r_j \big)
\right\} \\ &\;\geq\;
\lim_{j \to \infty}
L\big( {\bf D}f(y) \cdot \xi; y, r_j \big) \\ &\;\geq\;
\lip[ {\bf D}f(y) \cdot \xi ](y).
\end{split}
\hspace{.1in}
\right\}
\end{equation}
So if (\ref{lemma_liplinear}.B) holds with constant $M$, then as $\e \to 0$, Condition (\ref{lemma_liplinear}.A) follows from (\ref{rmk_lipderiv1side}.B), with constant $M\sqrt{N}$.
\end{proof}

\begin{rmk} \label{rmk_liplinear}
In the proof above, note that the differentiability property (\ref{defn_MDS}.A) is used, but not the uniqueness of measurable differentials from (\ref{defn_MDS}.B).
\end{rmk}

\subsection{Measures of controlled growth}

Let $\mu$ be a {\em doubling} measure on $X$ -- that is, $\mu$ is Radon and 
there exists $\kappa \geq 1$ so that
\begin{equation} \label{eq_doubling}
0 \;<\; \mu(B(x,2r)) \;\leq\; \kappa \, \mu(B(x,r)) \;<\; \infty
\end{equation}
holds, for all $x \in X$ and $r \in (0,{\rm diam}(X))$.
Metric spaces with such measures are also known as {\em spaces of homogeneous type}, after Coifman and Weiss \cite{Coifman:Weiss}.

\begin{rmk} \label{rmk_doubling}
We briefly list several useful properties of such measures.
\begin{enumerate}
\item[(\ref{rmk_doubling}.A)]
If $\mu$ is doubling on $X$ with constant $\kappa$, then \eqref{eq_doubling} also holds for balls with any center in $B(x,2r)$.  Indeed, it is known that for each $R > 0$ we have
$$
\mu(B(x,R)) \;\leq\; \left( \frac{r}{2R} \right)^{\log_2(\kappa)} \mu(B(y,r))
$$
for all $y \in B(x,R)$ and all $0 < r < 2R$; see \cite[Eq.\ 4.16]{Heinonen:LLA}.

\vspace{.05in}
\item[(\ref{rmk_doubling}.B)]
If $\mu$ is doubling on $X$ with constant $\kappa$, then $(X,d)$ is also a {\em doubling space}; in other words, there exists $N = N(\kappa) \in \N$ so that every ball $B(x,r)$ in $X$ can be covered by $N$ balls with centers in $B(x,r)$ and with radius $\frac{r}{2}$.  In particular, every ball in $X$ is {\em totally bounded}, so if $X$ is complete, then closed balls in $X$ are compact.

Moreover, such measures $\mu$ have the Vitali covering property \cite{Coifman:Weiss} and therefore satisfy the {\em Lebesgue differentiation theorem}, that is:\
\begin{equation} \label{eq_lebdiff}
\frac{1}{\mu(B(x,r))} \int_{B(x,r)} h \,d\mu \;\to\; h(x)
\end{equation}
holds for all $h \in L^1(X,\mu)$, at $\mu$-a.e.\ $x \in X$.

\vspace{.05in}
\item[(\ref{rmk_doubling}.C)]
If $(X_m,\xi^m)$ is a chart on $(X,d,\mu)$ with $\mu$ doubling, then the components of $\xi^m$ can be chosen as distance functions \cite[Cor 6.30]{Schioppa}, i.e.\ there exist points $(z_i^m)_{i=1}^{N_m} \subset X$ so that
\begin{equation} \label{eq_distcoords}
\xi^m_i(x) \,:=\, d(x,z_i^m).
\end{equation}
and $\xi^m := (\xi^m_i)_{i=1}^{N_m}$ satisfies Definition \ref{defn_MDS} for all Lipschitz functions on $X$.
\end{enumerate}
\end{rmk}

More generally, Keith considers also chunky measures \cite[Defn 2.2.1]{Keith}.  On doubling metric spaces in the sense of (\ref{rmk_doubling}.B), the Lip-lip condition with respect to such measures is also sufficient for Rademacher-type theorems \cite[Thm 2.3.1]{Keith}.

\begin{defn}[Keith] \label{defn_chunky}
A Radon measure $\mu$ on $X$ is {\em chunky} if for $\mu$-almost every $x \in X$, there exist $(r_n)_{n=1}^\infty$ in $\R^+$ with $r_n \searrow 0$ and with the property that, for every $\e > 0$ there exists $N \in \N$ satisfying the inequality
$$
\mu( B(x,r_n) ) \;<\; N \, \mu( B(y, \e r_n) )
$$
for all $n \geq N$ and all $y \in B(x,r_n)$.
\end{defn}

It is clear from (\ref{rmk_doubling}.A) that every doubling measure is chunky.  The next lemma takes a similar direction, by combining some of the previous observations.

\begin{lemma} \label{lemma_relchunky}
Let $\mu$ be a doubling measure on $X$ and let $A \subseteq X$.  If $\mu(A) > 0$, then the restriction measure 
$\mu|_A(S) := \mu(A \cap S)$ is chunky.
\end{lemma}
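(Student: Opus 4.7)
My plan is to reduce chunkiness of $\mu|_A$ to the Lebesgue density theorem for $A$, then use the center-change form of doubling (Remark (\ref{rmk_doubling}.A)) to propagate the density estimate to all nearby points $y$.

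First, since $\mu|_A$ is supported on $A$, the condition ``for $\mu|_A$-a.e. $x \in X$'' reduces to ``for $\mu$-a.e. $x \in A$.'' By the Lebesgue differentiation theorem \eqref{eq_lebdiff} applied to $\chi_A \in L^1(X,\mu)$, the set
\[
A^* \,:=\, \Big\{ x \in A \,:\, \lim_{r \to 0} \frac{\mu(A \cap B(x,r))}{\mu(B(x,r))} = 1 \Big\}
\]
has full $\mu$-measure in $A$. I would fix any $x \in A^*$ and take a null sequence $r_n \searrow 0$ (for example $r_n = 1/n$).

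Next, given $\e > 0$, I want to show that for all sufficiently large $n$ and every $y \in B(x, r_n)$,
\[
\mu(A \cap B(y, \e r_n)) \,\geq\, \tfrac{1}{2}\, \mu(B(y, \e r_n)).
\]
I would argue by contradiction: if this fails, there exist $y_n \in B(x,r_n)$ with $\mu(B(y_n,\e r_n) \setminus A) \geq \tfrac{1}{2}\mu(B(y_n,\e r_n))$. Since $B(y_n,\e r_n) \subseteq B(x,(1+\e)r_n)$, the ``bad set'' $B(x,(1+\e)r_n) \setminus A$ inherits this mass. By the center-change doubling estimate (Remark (\ref{rmk_doubling}.A)), $\mu(B(y_n, \e r_n)) \geq c(\e,\kappa)\, \mu(B(x,(1+\e)r_n))$ for some constant $c(\e,\kappa) > 0$, so
\[
\mu\bigl( B(x,(1+\e)r_n) \setminus A \bigr) \,\geq\, \tfrac{c(\e,\kappa)}{2}\, \mu\bigl( B(x,(1+\e)r_n) \bigr),
\]
contradicting the density-one property of $A$ at $x$ along the null sequence $(1+\e)r_n$.

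Finally, combining this with doubling once more yields the chunky estimate: for $y \in B(x,r_n)$ and large $n$,
\[
\mu|_A(B(y,\e r_n)) \,\geq\, \tfrac{1}{2}\mu(B(y,\e r_n)) \,\geq\, \tfrac{c(\e,\kappa)}{2}\mu(B(x,r_n)) \,\geq\, \tfrac{c(\e,\kappa)}{2}\mu|_A(B(x,r_n)),
\]
so one may take $N := \lceil 2/c(\e,\kappa) \rceil + 1$. The main obstacle is the contradiction step above, because the density statement is only given at $x$ while the chunky condition requires control at every center $y \in B(x,r_n)$; the trick is to ``transfer'' the defect of $A$ near $y$ back into a defect near $x$ in a slightly fattened ball, which is exactly what the center-change form of doubling permits.
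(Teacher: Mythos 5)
Your proof is correct and uses essentially the same ingredients as the paper's: the Lebesgue differentiation theorem applied to $\chi_A$ to obtain density-one points of $A$, plus the center-change form of the doubling inequality from Remark (\ref{rmk_doubling}.A). The paper's version is terser---it asserts directly that $\mu|_A$ is locally doubling near density points and hence chunky---whereas your contradiction argument spells out the propagation step from the fixed density point $x$ to nearby centers $y \in B(x,r_n)$, which the paper leaves implicit.
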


\begin{proof}
Indeed, (\ref{rmk_doubling}.B) implies that for $\mu$-a.e.\ $x \in A$, there exists $\rho_x > 0$ so that
$$
\frac{\mu(A \cap B(x,r))}{\mu(B(x,r))} \;\geq\; \frac{1}{2}
$$
holds whenever $r \in (0,\rho_x)$, so $\mu|_A$ satisfies the doubling condition \eqref{eq_doubling} with constant $2\kappa$ in place of $\kappa$, for all balls with centers in $B(x,\frac{\rho_x}{2})$ and radii at most $\frac{\rho_x}{2}$.
In particular, $\mu|_A$ satisfies the property in (\ref{rmk_doubling}.A) and is therefore chunky.
\end{proof}

\section{Derivations, pushforwards, and Euclidean tangents} \label{sect_derivs}

We now consider generalised differential operators called {\em (metric) derivations}.  The following notion is due to Weaver \cite[Defn 21]{Weaver} and holds in the general setting of measure spaces that support {\em measurable metrics}.  For the specific setting of metric measure spaces, see \cite[\S13]{Heinonen}, \cite{Gong_rigidity}, \cite{Gong_diffstruct}, and \cite{Schioppa}.

\begin{defn}[Weaver] \label{defn_derivation}
Fix a Borel measure $\mu$ on a metric space $(X,d)$.  A {\em derivation}
$\d : {\Lip}_b(X) \to L^\infty(X,\mu)$
is a bounded linear operator that obeys
\begin{enumerate}
\item[(\ref{defn_derivation}.A)] the product rule:\ $\d (fg) \;=\; f \, \d g \,+\, g \,\d f$;
\item[(\ref{defn_derivation}.B)] weak continuity:\ if $(f_j)_{j=1}^\infty$ is bounded in $\Lip_b(X)$ and converges pointwise to $f$, then 
$(\d f_j)_{j=1}^\infty$ converges weak-star to $\d f$ 
in $L^\infty(X,\mu)$.
\end{enumerate}
The space of derivations on $(X,d,\mu)$ is denoted by $\U(X,\mu)$, and the operator norm of $\d \in \U(X,\mu)$ is denoted
$$
\|\d\|_{\rm op} \;:=\; \sup\left\{ \|\d f\|_{L^\infty(X,\mu)} \,:\, f \in {\Lip}_b(X),\, \|f\|_{\Lip} \leq 1 \right\}.
$$
\end{defn}

\subsection{Derivations} \label{subsect_derivsbasics}

Note that $\U(X,\mu)$ forms a module over $L^\infty(X,\mu)$ via the action
$$
(\lambda \, \d)f(x) \;:=\; \lambda(x) \, \d f(x),
$$
so notions of {\em linear independence}, {\em basis}, and {\em rank} are well-defined for derivations.  In particular, characteristic functions $\chi_A$ of positive $\mu$-measured subsets $A \subset X$ induce an action of {\em locality} \cite[Thm 29]{Weaver} on $\U(X,\mu)$.

\begin{lemma}[Weaver] \label{lemma_locality}
Let $(X,d,\mu)$ be a metric measure space with $A \subseteq X$.  Then
$$
\U(A,\mu) \;=\;
\left\{ \chi_A\d \,:\, \d \in \U(X,\mu) \right\}.
$$
\end{lemma}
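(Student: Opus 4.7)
The plan is to establish both inclusions, with the central tool being a \emph{locality property} for Weaver derivations: if $\d \in \U(X,\mu)$ and $f \in \Lip_b(X)$ vanishes on $A$, then $\d f = 0$ $\mu$-a.e.\ on $A$.  To prove this I would introduce the truncations $f_\eta(x) := \operatorname{sign}(f(x)) \max(|f(x)|-\eta, 0)$, which are uniformly bounded in $\Lip_b(X)$, converge pointwise to $f$, and vanish on the open neighborhood $U_\eta := \{|f| < \eta\} \supseteq A$ of $A$.  For the cutoff $g(x) := \min(d(x, X \setminus U_\eta), 1) \in \Lip_b(X)$, which is strictly positive on $U_\eta$ and zero outside, the identity $f_\eta g \equiv 0$ combined with the product rule (\ref{defn_derivation}.A) yields $g \, \d f_\eta = 0$ on $U_\eta$, hence $\d f_\eta = 0$ $\mu$-a.e.\ there and in particular on $A$.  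Weak-$*$ continuity (\ref{defn_derivation}.B) as $\eta \to 0$ then transfers the conclusion to $\d f$ itself, by testing against $L^1$-functions supported in $A$.

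For the inclusion $(\supseteq)$, given $\d \in \U(X,\mu)$ I would define the operator $\chi_A \d$ on $\Lip_b(A)$ by $(\chi_A \d)(f) := \chi_A \cdot \d\tilde f$, where $\tilde f \in \Lip_b(X)$ is a (truncated) McShane extension with $\|\tilde f\|_\Lip = \|f\|_\Lip$.  The locality lemma ensures well-definedness, since any two such extensions differ by a Lipschitz function vanishing on $A$.  Linearity, boundedness $\|\chi_A \d\|_{\rm op} \leq \|\d\|_{\rm op}$, and the product rule transfer directly through the extension.  Weak-$*$ continuity is the subtler point: given a bounded sequence $(f_j)$ in $\Lip_b(A)$ with $f_j \to f$ pointwise, extend to a uniformly bounded $(\tilde f_j) \subset \Lip_b(X)$; by Lemma~\ref{lemma_dualbanach} and weak-$*$ compactness, any subsequence has a further subsequence $\tilde f_{j_k}$ converging weak-$*$ (equivalently pointwise) to some $F \in \Lip_b(X)$ with $F|_A = f$.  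Well-definedness forces $\chi_A \d F = (\chi_A \d)(f)$ regardless of the cluster point $F$, and the standard subsequence argument in the metrizable weak-$*$ topology on the bounded set delivers $(\chi_A \d)(f_j) \to (\chi_A \d)(f)$ weak-$*$ in $L^\infty(A,\mu)$.

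For the reverse inclusion $(\subseteq)$, given $\d_A \in \U(A,\mu)$ I would define $\d : \Lip_b(X) \to L^\infty(X,\mu)$ by $\d f := \chi_A \cdot \d_A(f|_A)$, extended by zero off $A$.  Linearity is immediate; boundedness follows because restriction to $A$ does not increase the $\Lip_b$-norm; the product rule holds because $(fg)|_A = f|_A \cdot g|_A$; and weak-$*$ continuity is inherited from $\d_A$, since restriction preserves pointwise convergence and $\Lip_b$-bounds, while multiplication by $\chi_A$ is weak-$*$ continuous on norm-bounded subsets of $L^\infty$.  By construction $\chi_A \d = \d_A$, completing the reverse inclusion.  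I expect the main obstacle to be the weak-$*$ continuity step in $(\supseteq)$, since pointwise convergence of $(f_j)$ on $A$ does not canonically lift to pointwise convergence of extensions on all of $X$; the compactness-and-locality argument is precisely what bridges this gap.
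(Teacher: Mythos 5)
The paper does not prove this lemma; it states it with a citation to Weaver's Theorem 29 and moves on, so there is no in-paper proof to compare against. Your reconstruction is essentially the standard argument and it is correct. The core locality step -- truncate $f$ to $f_\eta$ so that it vanishes on an open neighbourhood $U_\eta \supseteq A$, choose a Lipschitz cutoff $g$ supported in $U_\eta$ and strictly positive there, apply the Leibniz rule to $f_\eta g \equiv 0$, and pass to the limit $\eta \to 0$ via weak-$*$ continuity -- is exactly how locality for Weaver derivations is established in the literature, and the two set inclusions via McShane extension and extension-by-zero are the natural consequences.

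One point worth flagging is the weak-$*$ compactness step in the $(\supseteq)$ direction. Extracting a weak-$*$ \emph{sequentially} convergent sub-subsequence of $(\tilde f_j)$ from a bounded set of $\Lip_b(X)$ requires the weak-$*$ topology to be metrizable on bounded sets, i.e.\ the predual $AE(X_2^+)$ to be separable, which in turn requires $X$ (or at least the support of $\mu$) to be separable. In the paper's applications this always holds (doubling or pointwise-doubling Radon measures have $\sigma$-compact, hence separable, support), so it is not a real gap, but you should state it. Alternatively, one can sidestep compactness altogether: after establishing locality, define $(\chi_A\delta)(f)$ via the \emph{canonical} truncated McShane extension $\tilde f(x) := \min\bigl(\|f\|_\infty,\, \max\bigl(-\|f\|_\infty,\, \inf_{a\in A}\{f(a)+L(f)d(x,a)\}\bigr)\bigr)$ and verify directly that this map $f \mapsto \tilde f$ sends bounded pointwise-convergent sequences in $\Lip_b(A)$ to bounded pointwise-convergent sequences in $\Lip_b(X)$ \emph{after} multiplying by $\chi_A$ (which is all one needs, by locality), but this requires a small amount of extra bookkeeping. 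Either route is fine; just be aware that the compactness shortcut carries a hidden separability hypothesis.

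The remaining verifications (well-definedness via locality, linearity, boundedness with the correct operator norm, the Leibniz rule by using $\tilde f\tilde g$ as an admissible extension of $fg$, and the $(\subseteq)$ inclusion with $\delta f := \chi_A\,\delta_A(f|_A)$) are all carried out correctly, including the observation that multiplication by $\chi_A$ and restriction of Lipschitz functions are weak-$*$ continuous.
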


As a result, for Radon measures $\mu$ on $X$, the action of $\d \in \U(X,\mu)$ on $f \in \Lip(X)$ is well-defined, in that on every ball $B \subset X$, we interpret $\d f$ as
$$
(\d f)|_B \;=\; \chi_B\d(f|B)
$$
This implies, moreover, that sharper estimates hold for $\d f(x)$.  Indeed, for every $f \in \Lip(X)$, $x \in X$, and $r > 0$, the auxiliary function
$$
f_r \;:=\; (f - f(x))\big|_{B(x,r)}
$$
satisfies $\|f_r\|_\infty \leq r$ and $L(f_r) \leq L(f)$ and $\d f_r = \d f$ on $B(x,r)$.  So for $\mu$-density points $x \in X$ and sufficiently small $r > 0$, we obtain
\begin{equation} \label{eq_derivop}
\left.
\hspace{.225in}
\begin{split}
|\d f(x)| &\;=\; 
|\d f_r(x)| \;\leq\; 
\|\d f_r\|_{L^\infty(X,\mu)} \\ &\;\leq\;
\|\d\|_{\rm op} \|f_r\|_{\Lip} \;=\;
\|\d\|_{\rm op} \max\{ \|f_r\|_\infty, L(f_r) \} \;\leq\;
\|\d\|_{\rm op} L(f).
\end{split}
\hspace{.025in}
\right\}
\end{equation}

What follows is a characterisation theorem for measurable differentiable structures from \cite[Thm 1.6]{Gong_diffstruct}; see also \cite[Thm 5.9]{Schioppa}.  The proof uses a rank bound for derivations with respect to doubling measures \cite[Lem 1.10]{Gong_diffstruct}, as stated below as a lemma.

\begin{lemma} \label{lemma_doublingrank}
Let $(X,d)$ be an $N$-doubling metric space for some $N \in \N$.  Then there exists $N = N(\kappa) \in \N$ so that $\U(X,\mu)$ has rank at most $N$, for every Radon measure $\mu$ on $X$.
\end{lemma}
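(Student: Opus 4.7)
The plan is to argue by contradiction. Suppose $\U(X,\mu)$ had rank strictly greater than some $N = N(\kappa)$ (to be specified from the metric doubling constant in (\ref{rmk_doubling}.B)). Then one can find derivations $\d_1,\ldots,\d_{N+1} \in \U(X,\mu)$ and Lipschitz functions $f_1,\ldots,f_{N+1} \in \Lip_b(X)$ for which the matrix-valued function $x \mapsto (\d_i f_j(x))_{i,j=1}^{N+1}$ has non-zero determinant on a positive-measure Borel set $A \subseteq X$. This is a standard module-theoretic step: $L^\infty$-linear independence, combined with the locality Lemma \ref{lemma_locality} and the weak continuity (\ref{defn_derivation}.B), allows me to realise the linear independence of the $\d_i$'s ``pointwise'' via a choice of test functions.

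Next I would fix a density point $x_0 \in A$ (which exists by Lebesgue differentiation \eqref{eq_lebdiff}), take $L^\infty$-linear combinations of the $\d_i$ by the entries of the inverse matrix (which are continuous near $x_0$ in the averaged sense), and normalise so that $\d_i f_j(x_0) = \delta_{ij}$, with $\|f_j\|_{\Lip}$ uniformly bounded. The pointwise estimate \eqref{eq_derivop} then shows that each evaluation $E_i : f \mapsto \d_i f(x_0)$ is a bounded linear functional on $\Lip_b(X)$ whose value, via locality, depends only on the germ of $f$ at $x_0$. The existence of $f_1,\ldots,f_{N+1}$ with $E_i(f_j) = \delta_{ij}$ forces $E_1,\ldots,E_{N+1}$ to be linearly independent as elements of $\Lip_b(X)^*$.

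The core geometric step is to extract a dimension bound for the span of the $E_i$ from the doubling condition. Using the Arens--Eells predual identification \eqref{eq_predual}, the weak-$*$ continuity (\ref{defn_derivation}.B) identifies each $E_i$ with a molecule $\eta_i \in AE(X_2^+)$, and locality lets me approximate $\eta_i$ in $\|\cdot\|_{AE}$ by molecules supported in $B(x_0,r)$ for arbitrarily small $r$. By (\ref{rmk_doubling}.B), $B(x_0,r)$ admits an $r/2^k$-net of cardinality at most $N(\kappa)^k$ for each $k \in \N$; after rescaling distances by $r^{-1}$, a ``molecular snap-to-net'' argument (replacing each mass at a point by mass at the nearest net point) approximates $\eta_i$ by molecules supported on the net, up to an error in $\|\cdot\|_{AE}$ that vanishes when $k$ is taken large. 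Passing to a pointed Gromov--Hausdorff limit along $r\to 0$ (using that doubling closed balls are compact, again by (\ref{rmk_doubling}.B)), the finite-dimensional tangential evaluations collapse to at most $N(\kappa)$ independent ones, contradicting $E_i(f_j) = \delta_{ij}$ with $N+1$ indices.

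The main obstacle is precisely this last rescaling/compactness step, because the Arens--Eells norm $\|\cdot\|_{AE}$ couples coefficient size and spatial diameter, whereas the pointwise bound \eqref{eq_derivop} only sees the Lipschitz seminorm $L(f)$ and not $\|f\|_\infty$. To balance the two, I would normalise coefficients of the approximating molecules by the local scale $r$ and exploit the fact that the doubling constant $\kappa$ is preserved under blow-up, so that the combinatorial bound $N(\kappa)^k$ transfers to the tangential limit without inflation. This rescaling is essentially the prelude to the tangent-derivation construction announced in \S\ref{sect_derivs}, and I expect that a direct adaptation of that framework would supply the required compactness.
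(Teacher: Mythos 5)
The paper does not prove this lemma internally; it cites \cite[Lem 1.10]{Gong_diffstruct}, which establishes the rank bound via distance-function test maps and a module-theoretic linear algebra argument that never leaves the $L^\infty$ setting. Your proposal is in the right spirit (control the ``infinitesimal dimension'' of the space by the doubling constant), but the specific mechanism has several gaps that would prevent it from going through as written.

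The first and most serious gap is that the evaluation $E_i \colon f \mapsto \d_i f(x_0)$ is not a well-defined functional on $\Lip_b(X)$. The object $\d_i f$ is an equivalence class in $L^\infty(X,\mu)$, and a ``density-point representative'' is only pinned down off a $\mu$-null set that depends on $f$; an uncountable intersection of such exceptional sets cannot be avoided. So a single $x_0$ that simultaneously works for every $f$ is not available, and the matrix normalisation $\d_i f_j(x_0) = \delta_{ij}$ (which is fine for the finitely many fixed $f_j$) does not produce honest linear functionals $E_i$ on all of $\Lip_b(X)$.

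The second gap is the claimed predual representation. Even if $E_i$ were well-defined, weak continuity (\ref{defn_derivation}.B) only gives $\d f_j \wsto \d f$ in $L^\infty(X,\mu)$; weak-$*$ convergence in $L^\infty$ does not control values at a fixed point (or even $\mu$-a.e.), so $E_i$ is not weak-$*$ continuous on $\Lip_b(X)$, and there is no reason for $E_i$ to lie in the predual $AE(X_2^+)$ via \eqref{eq_predual}. Moreover, elements of $AE(X_2^+)$ are norm-limits of molecules, not molecules, so the ``snap-to-net'' step would have to be run in the completion, which your sketch does not address. Finally, the rescaling/Gromov--Hausdorff compactness step at the end would require a tangent-derivation theory for general doubling metric spaces, which Section \S\ref{sect_derivs} of the paper develops only for Euclidean targets and explicitly declines to pursue in the general metric case; appealing to it here would be circular relative to the machinery actually available. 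A workable argument instead exploits locality (Lemma \ref{lemma_locality}) and the chain-rule-type control of derivations by distance functions, together with the net-cardinality bound from (\ref{rmk_doubling}.B), to deduce the rank bound without ever evaluating $\d f$ at a point.
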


\begin{thm} \label{thm_MDSchar}
Let $(X,d)$ be a metric space with a doubling measure $\mu$.  If $\{X_m\}_{m=1}^\infty$ are subsets of $X$ with $\mu(X \setminus \bigcup_{m=1}^\infty X_m) = 0$, then the following are equivalent:\
\begin{itemize}
\item[(\ref{thm_MDSchar}.A)]
$X$ supports an $N$-dimensional measurable differentiable structure for some $N \in \N$, with charts $\{(X_m,\xi^m)\}_{m=1}^\infty$;
\item[(\ref{thm_MDSchar}.B)]
for each $m \in \N$, there exist a constant $K_m \geq 1$ and a linearly independent set ${\bf d}^m = (\d_i^m)_{i=1}^{N_m}$ in $\U(X_m,\mu)$ with $N_m \in N$ and so that the inequality
\begin{equation} \label{eq_lipderiv}
K_m^{-1}\Lip[f](x) \;\leq\; |{\bf d}^mf(x)| \;\leq\; K_m \Lip[f](x)
\end{equation}
holds for all $f \in \Lip(X)$ at $\mu$-a.e.\ $x \in X_m$.
\end{itemize}
\end{thm}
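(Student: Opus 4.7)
The strategy is to handle the two implications separately: in one direction extract component derivations from the measurable differential, and in the other construct Lipschitz coordinates dual to the given derivations. For $(\ref{thm_MDSchar}.A) \Rightarrow (\ref{thm_MDSchar}.B)$, on each chart $(X_m, \xi^m)$ I would define the component operators $\d_i^m f(x) := e_i \cdot \mathbf{D}^m f(x)$ for $i = 1, \ldots, N_m$. The upper estimate in \eqref{eq_lipderiv} is immediate from Remark \ref{rmk_lipderiv1side}.B: $\Lip[f](x) \leq L(\xi^m)\,|\mathbf{d}^m f(x)|$. For the reverse estimate, the uniqueness half of Definition \ref{defn_MDS}.B makes $\mathbf{v} \mapsto \Lip[\mathbf{v}\cdot\xi^m](x)$ a norm on $\R^{N_m}$ at $\mu$-a.e.\ $x$, whose equivalence with the Euclidean norm yields a measurable constant $K(x)$ satisfying
$$
|\mathbf{D}^m f(x)| \;\leq\; K(x) \, \Lip[\mathbf{D}^m f(x) \cdot \xi^m](x) \;\leq\; K(x) \, \Lip[f](x),
$$
the second step using subadditivity of $\Lip[\cdot](x)$ together with differentiability. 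A countable measurable exhaustion of $\{K(x) \leq K_m\}$ refines the partition $\{X_m\}$ so that \eqref{eq_lipderiv} holds uniformly on each piece. With this two-sided bound in hand, each $\d_i^m$ is a bounded linear map $\Lip_b(X) \to L^\infty(X_m, \mu)$; the Leibniz rule is inherited from the product rule for differentials, and weak-$*$ continuity follows from uniform $L^\infty$-boundedness of $(\d_i^m f_j)$, a Banach--Alaoglu extraction, and the identification of every weak-$*$ cluster point with $\d_i^m f$ via uniqueness of the differential. Linear independence of $(\d_i^m)_{i=1}^{N_m}$ in the module $\U(X_m, \mu)$ is immediate since $\d_i^m \xi_j^m(x) = \delta_{ij}$.

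For $(\ref{thm_MDSchar}.B) \Rightarrow (\ref{thm_MDSchar}.A)$, the plan is to produce Lipschitz coordinates dual to the given derivations on a measurable refinement of each $X_m$. Since $X$ is separable by Remark \ref{rmk_doubling}.B, enumerate a countable family $\{g_k\}_{k\in\N} \subset \Lip_b(X)$ rich enough to detect the pointwise functionals $\d_i^m|_x$ -- for instance, rational linear combinations of distance functions to a countable dense subset, whose sufficiency follows from the Arens--Eells predual structure of \S\ref{subsubsect_predual} together with the weak-$*$ continuity axiom. Module independence of $(\d_i^m)_i$ implies pointwise linear independence of the functionals $\d_i^m|_x$ at $\mu$-a.e.\ $x \in X_m$ (otherwise a measurable selection on the set of dependence produces nontrivial $L^\infty$-coefficients annihilating the tuple), and Lemma \ref{lemma_doublingrank} keeps $N_m$ finite; hence there exists some $N_m$-tuple $(g_{k_1}, \ldots, g_{k_{N_m}})$ for which the matrix $A(x) := [\d_i^m g_{k_j}(x)]$ is invertible at a.e.\ $x$. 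Partitioning $X_m$ by first such tuple and by the threshold $|\det A(x)| \geq 1/\ell$ gives a countable measurable decomposition $X_m = \bigsqcup_k X_m^k$ on whose pieces a fixed coordinate map $\xi^{m,k}$ yields an invertible $A(\cdot)$. For $f \in \Lip(X)$ and $x \in X_m^k$, setting $\mathbf{D}f(x) := A(x)^{-1}\mathbf{d}^m f(x)$ and applying \eqref{eq_lipderiv} to $g_x := f - \mathbf{D}f(x) \cdot \xi^{m,k}$ gives
$$
\Lip[g_x](x) \;\leq\; K_m \, |\mathbf{d}^m g_x(x)| \;=\; 0,
$$
which is differentiability at $x$; the same inequality applied to the difference of two candidate differentials yields uniqueness, and measurability of $\mathbf{D}f$ follows from measurability of the partition and of $A(\cdot)$.

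The main obstacle I anticipate is the weak-$*$ continuity verification in the first direction: pointwise convergence of a bounded Lipschitz sequence $(f_j)$ does not immediately transfer to pointwise convergence of $(\mathbf{D}^m f_j)$, so the argument must pass through weak-$*$ compactness in $L^\infty$ and a closing-the-loop identification of every cluster point with $\mathbf{D}^m f$, using uniqueness of the differential pointwise a.e. A secondary subtlety in the second direction is justifying that the countable family $\{g_k\}$ actually suffices to produce an invertible matrix on a full-measure subset; this is where the duality $AE(X_2^+)^* \cong \Lip_b(X)$ and the density of distance functions play their role in reducing to a countable exhaustion rather than a general measurable selection.
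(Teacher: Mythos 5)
The paper does not give a self-contained proof of Theorem~\ref{thm_MDSchar}; it cites \cite[Thm 1.6]{Gong_diffstruct}, with Remark~\ref{rmk_lipderiv} stating the structure (take $\mathbf{d}^m := \mathbf{D}^m$, prove \eqref{eq_lipderiv} from uniqueness, then check weak continuity) and with the proof of Proposition~\ref{prop_MDSderivs} indicating how the argument really goes. Your overall skeleton --- identify $\mathbf{d}^m$ with $\mathbf{D}^m$ in one direction, build coordinates out of distance functions and a dual basis in the other --- is broadly the same as the cited approach, and your $(\ref{thm_MDSchar}.B) \Rightarrow (\ref{thm_MDSchar}.A)$ sketch is a reasonable, if differently packaged, version of the piecewise-distance-approximation argument in \cite{Gong_diffstruct}: you threshold on $|\det A(x)|$ where the paper exhausts by $\varepsilon$-nets and the functions $f_\varepsilon$ of \eqref{eq_piecewise}, and both reduce to countably many Lipschitz comparison functions with the derivations evaluated against them.

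The genuine gap is in the weak-$*$ continuity step of $(\ref{thm_MDSchar}.A) \Rightarrow (\ref{thm_MDSchar}.B)$, which you flag as the main obstacle but do not actually close. Banach--Alaoglu gives a weak-$*$ cluster point $g$ of $(\mathbf{D}^m f_j)$, but uniqueness of the differential does not identify $g$ with $\mathbf{D}^m f$: uniqueness is a statement about a \emph{fixed} function ($\mathbf{D}^m f$ is the only a.e.\ vectorfield making $f$ differentiable), while what you need is a \emph{closure} property (if $f_j \to f$ boundedly and pointwise and $\mathbf{D}^m f_j \wsto g$, then $g$ realizes the differentiability of $f$). These are not the same, and the gap is not cosmetic. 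Consider $h_j := f_j - f \to 0$ pointwise with $\|h_j\|_{\Lip}$ bounded: one would need $\mathbf{D}^m h_j \wsto 0$, but nothing in the definition of differentiability forces this, since $\Lip[h_j](\cdot)$ need not go to zero even weakly. The paper's route (explained for Proposition~\ref{prop_MDSderivs} and used implicitly here through the citation) is precisely designed to supply this closure: one shows $f \mapsto \mathbf{D}^m f$ extends continuously to a Haj\l{}asz--Sobolev space $H^{1,p}$ via \cite[Thm 9]{Franchi:Hajlasz:Koskela}, whose proof rests on Lipschitz partitions of unity, i.e.\ exactly where the doubling hypothesis is consumed. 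This is why the paper states that ``the doubling condition is used only to check that each component of $\mathbf{D}^m$ is weakly continuous'' --- and it is why your doubling-free ``uniqueness closes the loop'' step cannot be right as written; if it were, the characterisation would hold for arbitrary Radon measures, whereas even Proposition~\ref{prop_MDSderivs}, which works hard to weaken the hypothesis, still needs pointwise doubling and the same Sobolev-space machinery. To repair your argument, replace the abstract cluster-point identification with a Mazur's-lemma / reflexive-$L^p$ scheme (cf.\ Step~\textbf{II} of the proof of Lemma~\ref{lemma_MDSpushfwd} and the argument for Claim~\ref{claim_sublimit}), passing through strong $L^p$ convergence of convex combinations and the Franchi--Haj\l{}asz--Koskela inclusion to recover a.e.\ pointwise convergence of a subsequence of differentials to $\mathbf{D}^m f$.

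A secondary, smaller point: in $(\ref{thm_MDSchar}.B) \Rightarrow (\ref{thm_MDSchar}.A)$ the test function $g_x = f - \mathbf{D}f(x)\cdot\xi^{m,k}$ depends on $x$, so \eqref{eq_lipderiv} cannot be applied to $g_x$ at $x$ for uncountably many $x$ from a single exceptional null set; you must first apply \eqref{eq_lipderiv} to the countable family $\{\,f - \mathbf{q}\cdot\xi^{m,k} : \mathbf{q} \in \mathbb{Q}^{N_m}\,\}$ and then pass to general $\mathbf{v}$ by continuity of $\mathbf{v} \mapsto \Lip[f - \mathbf{v}\cdot\xi^{m,k}](x)$ at each fixed $x$. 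Your write-up omits this reduction, which is standard but necessary.
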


\begin{rmk} \label{rmk_lipderiv}
In the above theorem, the tuple of derivations agrees with the measurable differential, i.e.\ ${\bf d}^m := {\bf D}^m $, and the doubling condition is used only to check that each component of ${\bf D}^m $ is weakly continuous, hence a derivation.

Inequality \eqref{eq_lipderiv} in fact holds for all metric spaces supporting an MDS, even when the Radon measure $\mu$ is not doubling; for details, see \cite[Lem 5.1]{Gong_diffstruct}.
\end{rmk}

For completeness, we now sketch one of the implications in Theorem \ref{thm_converse}, since Theorem \ref{thm_keith} does not automatically apply to it.

\begin{proof}[Proof of (\ref{thm_converse}.B) $\Rightarrow$ (\ref{thm_converse}.A)]
Up to a subset of $\mu$-measure zero, the union of the subsets $\{Y_m\}_{m=1}^\infty$ covers $X$; without loss, each $Y_m$ has positive $\mu$-measure.
Since $\mu$ is doubling for some $\kappa \geq 1$, it follows by Lemma \ref{lemma_relchunky} that 
$\mu_m := \mu|_{Y_m}$ is chunky; in fact, the proof of that lemma shows that $\mu_m$ is locally doubling with constant $2\kappa$.

By hypothesis, each $Y_m$ satisfies a Lip-lip condition.  As indicated before, Keith's theorem applies to this case, so each $Y_m$ has an MDS with atlas $\{X_{ml}\}_{l=1}^\infty$.  Further applying Theorem \ref{thm_MDSchar}, each chart $X_{ml}$ supports a basis in $\U(X,\mu_l)$.  Because $\mu_m$ is locally doubling with constant $2\kappa$, a standard Vitali covering argument and Lemmas \ref{lemma_locality} and \ref{lemma_doublingrank} imply that the MDS on $X_{ml}$ is at most $N(\kappa)$-dimensional.

Thus the full union $\{ X_{ml} \}_{m,l=1}^\infty$ forms an atlas for $X$.
\end{proof}

\subsection{Pushforwards} \label{subsect_pushfwd}

For a Borel map $T : X \to Y$ between metric spaces, every Radon measure $\mu$ on $X$ admits a {\em pushforward measure} $T_\#\mu$ on $Y$, 
\begin{equation} \label{eq_pushfwdmeas}
T_\#\mu(A) \;:=\; \mu(T^{-1}(A))
\end{equation}
which is Radon and obeys the following transformation formula \cite[Thm 1.18 \& 1.19]{Mattila}, for all Borel $\varphi : Y \to \R$:
\begin{equation} \label{eq_transform}
\int_Y \varphi \, d(T_\#\mu) \;=\; \int_X (\varphi \circ T) \, d\mu
\end{equation}
As shown in \cite[Lem 2.17]{Gong_rigidity}, for every $\d \in \U(X,\mu)$ there is a unique {\em pushforward derivation} $T_\#\d \in \U(Y,\zeta_\#\mu)$ that is completely determined by the formula
\begin{equation} \label{eq_derivpushfwd}
\int_Y \psi \, [T_\#\d]f \, d(T_\#\mu) \;=\; 
\int_X (\psi \circ T) \, \d(f \circ T) \,d\mu
\end{equation}
for all $\psi \in L^1(Y,T_\#\mu)$ and $f \in \Lip(X)$, and the linear operator 
$$
\d \;\mapsto\; T_\#\d
$$
preserves linear independence \cite[Lem 2.18]{Gong_rigidity}. 

Moreover, $(T_\#\d)f \circ T$ and $\d(f \circ T)$ agree as dual elements acting on the class of composite functions
$\{ \psi \circ T : \psi \in L^1(Y,T_\#\mu)\}$.
For spaces supporting MDS's with $T = \xi^m$, however, they are equal in the usual sense.

\begin{lemma} \label{lemma_pushfwd}
Let $(X,d)$ be a metric space with doubling measure $\mu$.  If $X$ supports a measurable differentiable structure with charts $\{ (X_m,\xi^m) \}_{m=1}^\infty$, then
$$
[\xi^m_\#{\bf D}^m]f \circ \xi^m \;=\; {\bf D}^m (f \circ \xi^m)
$$
holds $\mu$-a.e.\ on each $X_m$ for all $f \in \Lip(X)$.
\end{lemma}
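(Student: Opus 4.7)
The plan is to first rewrite the defining identity \eqref{eq_derivpushfwd} of the pushforward using the transformation formula \eqref{eq_transform}. Componentwise, with $T = \xi^m$ and $\varphi = \psi \cdot [\xi^m_\# \d^m_i]f$ for $\psi \in L^1(\R^{N_m}, \xi^m_\#\mu)$, the left-hand side becomes an integral over $X_m$, so that
\begin{equation} \label{eq_zerofibertest}
\int_{X_m} (\psi \circ \xi^m) \bigl\{ ([\xi^m_\# \d^m_i]f) \circ \xi^m \,-\, \d^m_i(f \circ \xi^m) \bigr\} \, d\mu \;=\; 0.
\end{equation}
Since the functions of the form $\psi \circ \xi^m$ are exactly the $\sigma(\xi^m)$-measurable elements of $L^1(X_m,\mu)$, \eqref{eq_zerofibertest} expresses that $([\xi^m_\# \d^m_i]f) \circ \xi^m$ coincides with the $\mu$-conditional expectation of $\d^m_i(f \circ \xi^m)$ with respect to $\sigma(\xi^m)$.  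The lemma thus reduces to showing that $\d^m_i(f \circ \xi^m)$ is itself $\sigma(\xi^m)$-measurable modulo $\mu$-null sets.

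For $f \in C^1(\R^{N_m})$, ordinary differentiability of $f$ at every point, combined with Lipschitz continuity of $\xi^m$, gives
$$
|f(\xi^m(y)) - f(\xi^m(x)) - \nabla f(\xi^m(x)) \cdot (\xi^m(y) - \xi^m(x))| \;=\; o(d(x,y))
$$
as $y \to x$, so $x \mapsto \nabla f(\xi^m(x))$ is a measurable vectorfield satisfying condition (\ref{defn_MDS}.A) at every $x \in X_m$.  By the uniqueness clause of (\ref{defn_MDS}.B), it follows that $\mathbf{D}^m(f \circ \xi^m) = \nabla f \circ \xi^m$ holds $\mu$-a.e.\ on $X_m$, and this vectorfield is visibly $\sigma(\xi^m)$-measurable.

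To reach general $f \in \Lip(\R^{N_m})$, I would localize via Lemma \ref{lemma_locality} to a ball $B \subset X_m$ (compact with bounded $\xi^m$-image, by (\ref{rmk_doubling}.B)) and approximate $f$ on $\xi^m(B)$ by mollifications $f_k \in C^\infty(\R^{N_m})$ that are uniformly bounded, uniformly Lipschitz, and converge pointwise to $f$.  Then $f_k \circ \xi^m$ is bounded in $\Lip_b(B)$ and converges pointwise to $f \circ \xi^m$, so the weak continuity property (\ref{defn_derivation}.B) yields $\d^m_i(f_k \circ \xi^m) \wsto \d^m_i(f \circ \xi^m)$ in $L^\infty(B,\mu)$.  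Each approximant $\d^m_i(f_k \circ \xi^m) = \partial_i f_k \circ \xi^m$ is $\sigma(\xi^m)$-measurable by the smooth case.  Since the subspace $L^\infty(B, \sigma(\xi^m), \mu) \subset L^\infty(B,\mu)$ is weak-$*$ closed (being the annihilator in $L^\infty$ of the norm-closed $L^1$-subspace $\{\psi - \mathbb{E}[\psi \mid \sigma(\xi^m)] : \psi \in L^1(B,\mu)\}$), the limit $\d^m_i(f \circ \xi^m)$ is also $\sigma(\xi^m)$-measurable on $B$.  A countable covering of $X_m$ then extends this conclusion to all of $X_m$, and combining with \eqref{eq_zerofibertest} completes the proof.

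The step I expect to be most delicate is the passage from smooth to general Lipschitz $f$.  One cannot naively apply Rademacher's theorem on $\R^{N_m}$ to $f$ and pull back the identity $\nabla f \circ \xi^m$, because the pushforward $\xi^m_\#\mu$ need not be absolutely continuous with respect to Lebesgue measure when $N_m \geq 3$ (compare Corollary \ref{cor_converse}).  The weak-$*$ closure argument above circumvents this obstacle entirely, by exploiting only the $\sigma(\xi^m)$-measurability of smooth approximants and the weak-$*$ closedness of $L^\infty(B,\sigma(\xi^m),\mu)$, which is inherited from its natural predual.
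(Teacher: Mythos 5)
Your proposal is correct and takes a genuinely different route from the paper. The paper proves $\int_X h\,F_m\,d\mu = 0$ for all $h \in L^1(X,\mu)$ directly, where $F_m := {\bf D}^m(f\circ\xi^m) - [\xi^m_\#{\bf D}^m]f\circ\xi^m$: it approximates $h$ by a compactly supported Lipschitz $h'$ (needing Lipschitz partitions of unity and hence the doubling property), replaces $h'$ near $\mu$-a.e.\ $x$ by the pulled-back affine function $l^x_m\circ\xi^m$ (needing differentiability of $h'$), integrates the identity \eqref{eq_transform2} against these affine pullbacks, and glues via the Vitali covering theorem. Your approach is dual: rather than pushing test functions into the class $\{\psi\circ\xi^m\}$, you show that ${\bf D}^m(f\circ\xi^m)$ already lies in it, i.e.\ is $\sigma(\xi^m)$-measurable, whence it must coincide with its conditional expectation $[\xi^m_\#{\bf D}^m]f\circ\xi^m$. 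You obtain the $\sigma(\xi^m)$-measurability from the chain rule for smooth mollifications $f_k$ (using the uniqueness clause in (\ref{defn_MDS}.B) to get ${\bf D}^m(f_k\circ\xi^m) = \nabla f_k\circ\xi^m$), followed by weak continuity of the derivation and weak-$*$ closedness of $L^\infty(B,\sigma(\xi^m),\mu)$; the annihilator argument you give for weak-$*$ closedness is correct. This buys you something: your argument replaces Vitali covering and Lipschitz partitions of unity by purely functional-analytic tools, and seems to avoid the doubling hypothesis on $\mu$ except insofar as it is needed to guarantee $\xi^m(B)$ is bounded — which already follows from Lipschitzness of $\xi^m$ and boundedness of $B$. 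This observation is consonant with the paper's later appendix, where Lemma \ref{lemma_pushfwd} is revisited under weaker hypotheses; your route would streamline that extension. One minor point worth flagging: you should note explicitly, as you implicitly do, that the conditional expectation is taken on a ball $B$ of finite measure and that the $\sigma$-algebras are taken $\mu$-completed so the Doob--Dynkin identification of $\{\psi\circ\xi^m : \psi \in L^1(\xi^m_\#\mu)\}$ with the $\sigma(\xi^m)$-measurable $L^1$ functions goes through cleanly.
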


\begin{proof}
Fix $\d := {\bf D}^m _i$ for some $i \in \{1, \cdots, N_m\}$, where ${\bf D}^m  = ({\bf D}^m _1, \cdots , {\bf D}^m _{N_m})$.  The previous transformation formulas \eqref{eq_transform} and \eqref{eq_derivpushfwd} imply, in particular, that
\begin{equation} \label{eq_transform2}
\left.
\hspace{.45in}
\begin{split}
\int_X (\psi \circ \xi^m) \big( [\xi^m_\#\d]f \circ \xi^m\big) \,d\mu &\;=\; 
\int_Y \psi \, [\xi^m_\#\d]f \,d(\xi^m_\#\mu) \\ &\;=\; 
\int_X (\psi \circ \xi^m) \d(f \circ \xi^m) \,d\mu
\end{split}
\hspace{.45in}
\right\}
\end{equation}
holds for all $f, \psi \in \Lip(\R^{N_m})$, with $\psi$ compactly supported.  As a shorthand, put
\begin{eqnarray*}
F_m &:=& \d(f \circ \xi^m) - [\xi^m_\#\d]f \circ \xi^m, \\
c_m &:=& \big\| |\d(f \circ \xi^m)| + |[\xi^m_\#\d]f \circ \xi^m| \big\|_{L^\infty(X,\mu)}.
\end{eqnarray*}
Given $h \in L^1(X,\mu)$ and $\e > 0$, 
since $\mu$ is doubling,
there exists $h' \in \Lip_b(X)$, constructed via Lipschitz partitions of unity \cite[p.\ 1908]{Franchi:Hajlasz:Koskela}, so that
\begin{equation} \label{eq_approx2}
\|h-h'\|_{L^1(X,\mu)} \;<\; \frac{\e}{2c_m}.
\end{equation}
So for $\mu$-a.e.\ $x \in {\rm spt}(h')$ and for the affine function $l_m^x : \R^{N_m} \to \R$, given by
$$
l_m^x(z) \;:=\; 
h'(x) \,-\, {\bf D}^m h'(x) \cdot \big( z - \xi^m(x) \big),
$$
Equation \eqref{eq_transform2} and condition (\ref{rmk_lipderiv1side}.A) imply that, for sufficiently small $r = r(x) > 0$ and for the $L^1$-test function $\psi := \chi_{B(x,r)} (\ell^x_m \circ \xi^m)$, we have
\begin{eqnarray*}
\left| \int_{B(x,r)} h' F_m \,d\mu \right| &\leq&
\int_{B(x,r)} \big| (h' - l_m^x \circ \xi^m) \, F_m \big| \,d\mu \,+\,
\left| \int_{B(x,r)} (l_m^x \circ \xi^m) \, F_m \,d\mu \right| \\ &=&
\int_{B(x,r)} \big| (h' - l_m^x \circ \xi^m) \, F_m \big| \,d\mu \,+\, 0 \\ &\leq&
c_m\sup_{B(x,r)}|h' - l_m^x \circ \xi^m| \, \mu(B(x,r)) \;\leq\;
\frac{\e}{2} \, \frac{\mu(B(x,r))}{\mu\big({\rm spt}(h')\big)}.
\end{eqnarray*}
Lastly, by Vitali's Covering Theorem the collection of balls 
$$
\left\{ B(x,\rho) \,;\, x \in {\rm spt}(h'),\, 0 < \rho < r(x) \right\}
$$
contains a pairwise-disjoint sub-collection, denoted by $\{ B(x_i,r_i) \}_{i=1}^\infty$, so that
$$
\mu\Big( {\rm spt}(h') \setminus \bigcup_{i=1}^\infty B(x_i,r_i) \Big) \;=\; 0
$$
and hence the mean-value estimate becomes
\begin{equation} \label{eq_approx3}
\left| \int_X h' F_m \,d\mu \right| \;\leq\;
\sum_{i=1}^\infty \left| \int_{B(x_i,r_i)} h' F_m \,d\mu \right| \;\leq\;
\sum_{i=1}^\infty \frac{\e}{2} \frac{\mu(B(x_i,r_i))}{\mu\big({\rm spt}(h')\big)} \;\leq\;
\frac{\e}{2}.
\end{equation}
Since $\e > 0$ was arbitrary, the lemma follows from combining \eqref{eq_approx2} and \eqref{eq_approx3}.
\end{proof}

\subsection{Tangent measures and derivations}

Before moving to proofs of the main result and auxiliary lemmas,  we introduce a new construction for derivations in $\R^n$, as inspired by the work of Marstrand \cite{Marstrand} and Preiss \cite{Preiss}.
To begin, recall that for bounded domains $\Omega \subset \R^n$, the Riesz representation theorem states that the Banach dual of $C_b(\Omega)$, the class of bounded continuous functions on $\Omega$, consists of signed measures on $\Omega$ under the total variation norm:
$$
\|\mu\|_{\rm op} \;:=\;
\sup\left\{ 
\int_\Omega \varphi \,d\mu \;;\; \varphi \in C_b(\Omega), \, \|\varphi\|_\infty \,\leq\, 1
\right\}
$$
As a result, the class of Radon measures on $\Omega$ has a natural weak-star topology.

\begin{defn} \label{defn_tanmeas}
Let $\mu$ be a Radon measure on a bounded domain $\Omega \subset \R^n$ and let $a \in \Omega$.  A measure $\nu$ on $\R^n$ is called a {\em tangent measure} of $\mu$ at $a$, denoted $\nu \in \Tan(\mu,a)$, if there exist $(c_j)_{j=1}^\infty, (r_j)_{j=1}^\infty \subset \R^+$ with $r_j \searrow 0$ and so that
$$
\nu \;=\;
\wslim_{j \to \infty} c_j\big(T_{a,r_j}\big)_\#\mu, \, \text{ where } \,
T_{a,r}(x) \;:=\; \frac{x-a}{r}
$$
and where the limit is taken in the weak-star topology of signed measures.

A {\em tangent derivation} of $\mu$ at $a$ is a derivation in $\U(\R^n,\nu)$, for some $\nu \in \Tan(\mu,a)$.
\end{defn}

It is known \cite[Chap.\ 14]{Mattila} that if $\mu$ is Radon, then so is any $\nu \in \Tan(\mu,a)$.

Just as tangent measures arise from ``zooming in'' a measure at a fixed point, tangent derivations arise from the same zooming process at the same point.

\begin{thm} \label{thm_tanderiv}
Let $\Omega$ be a bounded domain in $\R^n$ and let $\mu$ be a Radon measure supported in $\Omega$.  If $a \in \Omega$ is a $\mu$-density point and if $\nu \in \Tan(\mu,a)$, 
then there exists a linear operator
$
T_a: \U(\Omega,\mu) \to \U(\R^n,\nu)
$
so that $\d \neq 0$ implies $T_a\d \neq 0$.
\end{thm}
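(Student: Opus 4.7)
The plan is to construct $T_a\d$ through a blow-up procedure at $a$. For each $j$, I would define the rescaled pushforward derivation $\d_j := r_j(T_{a,r_j})_\#\d \in \U(\R^n,\mu_j)$, where $\mu_j := c_j(T_{a,r_j})_\#\mu$. The factor $r_j$ exactly compensates for $L(T_{a,r_j}) = r_j^{-1}$, so $\|\d_j\|_{\rm op} \leq \|\d\|_{\rm op}$ uniformly in $j$. For each $f \in \Lip_b(\R^n)$, I would view $\d_j f \cdot \mu_j$ as a signed measure on $\R^n$; the bound $|\d_j f \cdot \mu_j|(K) \leq \|\d\|_{\rm op}\|f\|_{\Lip}\,\mu_j(K)$ together with $\mu_j \wsto \nu$ gives uniform total-variation control on every compact $K \subset \R^n$.

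Next, I would apply weak-$*$ compactness in the dual of $C_c(\R^n)$, combined with a diagonal argument over a countable $\|\cdot\|_{\Lip}$-dense subset of $\Lip_b(\R^n)$, to extract a single subsequence along which $\d_j f \cdot \mu_j \wsto \sigma(f)$ for every $f$ in that dense family. Passing to the limit in the total-variation estimate yields $|\sigma(f)| \leq \|\d\|_{\rm op}\|f\|_{\Lip}\nu$, so $\sigma(f) \ll \nu$ with Radon--Nikodym density in $L^\infty(\nu)$; define $T_a\d(f) := d\sigma(f)/d\nu$ and extend by norm continuity. Linearity and the product rule transfer to the limit because the factors in $\Lip_b$ are bounded and uniformly continuous on compacta, while weak-$*$ continuity of $T_a\d$ follows from a \emph{second} diagonal argument interlacing pointwise convergence of bounded sequences in $\Lip_b(\R^n)$ with the $j$-limit. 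Since $\U(\Omega,\mu)$ has finite rank by Lemma \ref{lemma_doublingrank}, fixing a basis lets me refine the subsequence once more so that the construction is simultaneously valid for every basis element, thereby producing a single linear operator $T_a$.

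For the preservation of nonzeroness, suppose $\d \neq 0$, so some $f \in \Lip_b(\Omega)$ has $\d f \not\equiv 0$; by Lebesgue differentiation there is a positive-$\mu$-measure set of Lebesgue points $a$ with $\d f(a) \neq 0$. Plugging the $j$-dependent rescaled functions $f_j(y) := r_j^{-1}(f(a+r_j y) - f(a))$ (truncated outside a ball containing ${\rm spt}(\psi)$) into the locality of $\d$ and \eqref{eq_derivpushfwd} yields
\begin{equation*}
\int_{\R^n} \psi \, \d_j f_j \, d\mu_j \;=\; c_j \int_\Omega (\psi \circ T_{a,r_j}) \, \d f \, d\mu \;\xrightarrow[j\to\infty]{}\; \d f(a)\int_{\R^n}\psi \, d\nu
\end{equation*}
for every $\psi \in C_c(\R^n)$, using Lebesgue differentiation for $\d f \in L^\infty(\Omega,\mu)$ and $\mu_j \wsto \nu$. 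The $f_j$ are uniformly $L(f)$-Lipschitz and bounded, so an Arzel\`a--Ascoli subsequence (absorbed into the diagonal extraction defining $T_a$) gives $f_j \to f_\infty$ uniformly on compacta for some $f_\infty \in \Lip_b(\R^n)$; weak-$*$ continuity of $T_a\d$ then forces $T_a\d\,f_\infty = \d f(a) \neq 0$ in $L^\infty(\nu)$, whence $T_a\d \neq 0$. The main obstacle is the simultaneous management of two limits --- the subsequence in $j$ defining $T_a$ and the pointwise convergence of the rescaled test functions $f_j \to f_\infty$ --- which is precisely why the weak-$*$ continuity axiom in Definition \ref{defn_derivation} is indispensable here.
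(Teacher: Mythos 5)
Your construction of the rescaled pushforward derivations $\d_j := r_j(T_{a,r_j})_\#\d$ and the appeal to weak-star compactness of the signed measures $\d_jf\,d\mu_j$ coincide with the paper's Step~1. The central gap is in how you pass from a single $f$ to a well-defined operator: you propose a diagonal extraction over ``a countable $\|\cdot\|_{\Lip}$-dense subset of $\Lip_b(\R^n)$,'' followed by ``extension by norm continuity.'' But $\Lip_b(\R^n)$ is not norm-separable --- for disjointly supported unit bump functions $f_n$ and arbitrary $A\subseteq\N$, the family $g_A := \sum_{n\in A}f_n$ is uncountable with pairwise $\|\cdot\|_{\Lip}$-distance at least $1$ --- so no such countable dense family exists and there is nothing to extend from. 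The paper's proof is structured precisely to avoid this: it first shows (Step~2, via the Chain Rule, Lemma~\ref{lemma_chainrule}) that on $C^1\cap\Lip$ the weak-star sublimit is a full limit, independent of subsequence, and then extends to general Lipschitz $f$ by mollification combined with an explicit predual argument in the Arens--Eells space $AE(X_2^+)$ (Step~3). The relevant separability is that of the predual, exploited through weak-star convergence, not norm density of Lipschitz functions.

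Your nonvanishing argument has a distinct problem. Evaluating the $j$-dependent rescaled functions $f_j$ against the $j$-dependent derivations $\d_j$ gives only the diagonal limit $\int\psi\,\d_jf_j\,d\mu_j \to \d f(a)\int\psi\,d\nu$; this does not compute $[T_a\d]f_\infty$, which is defined by sending $k\to\infty$ in $\d_kf_\infty\,d\mu_k$ with $f_\infty$ \emph{fixed}. The weak-continuity axiom of Definition~\ref{defn_derivation} alone does not license interchanging these two limits; you would need a Moore--Osgood type uniformity in $j$, which you have not supplied. A cleaner route --- and the one the paper effectively uses in Corollary~\ref{cor_tanderivrank} --- is to test directly on the coordinate functions: by the Chain Rule, $\d\ne 0$ forces $\d x_i\not\equiv 0$ for some $i$, and at a $\mu$-Lebesgue point $a$ of $\d x_i$ with $\d x_i(a)\neq 0$, a computation identical in spirit to yours yields $[T_a\d]x_i = \d x_i(a)\neq 0$ $\nu$-a.e., with no double-limit interchange at all. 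Finally, you assert weak-star continuity of $T_a\d$ ``from a second diagonal argument'' without any detail; this is not routine and occupies the paper's Step~4 with an explicit $\e/3$ estimate.
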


To prove the theorem, we will require an auxiliary result, called a ``Chain Rule'' for derivations \cite[Lem 2.19]{Gong_rigidity}.

\begin{lemma} \label{lemma_chainrule}
Let $\nu$ be a Radon measure on $\R^n$.  For every $f \in \Lip(\R^n)$, there is a $\nu$-measurable ${\bf v}_f = (v_f^i)_{i=1}^n : \R^n \to \R^n$ with each $v_f^i \in L^\infty(\R^n,\mu)$ and so that
$$
\d f \;=\;
{\bf v}_f \cdot \d\id_{\R^n} \;=\;
\sum_{i=1}^n v_f^i \, \d x_i.
$$
holds, for all $\d \in \U(\R^n,\nu)$.  If $f \in C^1(\R^n)$, then ${\bf v}_f = \nabla f$.
\end{lemma}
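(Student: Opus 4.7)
The plan is to prove the identity in two stages: first for smooth functions (where ${\bf v}_f$ must coincide with $\nabla f$), then for general Lipschitz functions via mollification combined with weak-star compactness in $L^\infty(\R^n,\nu)$. For the smooth case, applying the product rule (\ref{defn_derivation}.A) to $1 = 1 \cdot 1$ forces $\d(1) = 0$, so $\d$ annihilates constants. An induction on degree using only the Leibniz rule then yields $\d p = \nabla p \cdot \d\id_{\R^n}$ for every polynomial $p$, where each coordinate derivation $\d x_i$ lies in $L^\infty(\R^n,\nu)$ with norm at most $\|\d\|_{\rm op}$ by \eqref{eq_derivop}, interpreting the unbounded $x_i$ as a Lipschitz function via the locality-based extension of $\d$. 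For $f \in C^1 \cap \Lip_b$, approximate $f$ on each compact set by polynomials $p_k$ with $p_k \to f$ and $\nabla p_k \to \nabla f$ pointwise and with uniformly bounded Lipschitz norms; weak-star continuity (\ref{defn_derivation}.B) then propagates the polynomial identity through the limit to yield $\d f = \nabla f \cdot \d\id_{\R^n}$.

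For general $f \in \Lip(\R^n)$, set $f_\e := f \ast \rho_\e$ for a standard mollifier $\rho_\e$. Then $f_\e$ is smooth with $L(f_\e) \leq L(f)$, and $f_\e \to f$ locally uniformly; by the smooth case, $\d f_\e = \nabla f_\e \cdot \d\id_{\R^n}$. The family $\{\nabla f_\e\}_{\e>0}$ is uniformly bounded in $L^\infty(\R^n,\nu)^n$ by $L(f)$, so Banach-Alaoglu provides a subsequence $\e_k \to 0$ for which $\nabla f_{\e_k}$ converges weak-star to some ${\bf v}_f \in L^\infty(\R^n,\nu)^n$. Crucially this extraction uses only $f$ and $\nu$, with no derivation involved. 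Fixing this subsequence, for any $\d \in \U(\R^n,\nu)$ weak-star continuity gives $\d f_{\e_k} \to \d f$ weak-star in $L^\infty$, while $\nabla f_{\e_k} \cdot \d\id_{\R^n} \to {\bf v}_f \cdot \d\id_{\R^n}$ weak-star as well (since each $\d x_i \in L^\infty$ is fixed, pairing the product with a test function $g \in L^1$ reduces to pairing $\partial_i f_{\e_k}$ against the fixed $g\,\d x_i \in L^1$). Uniqueness of weak-star limits then forces $\d f = {\bf v}_f \cdot \d\id_{\R^n}$.

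The main obstacle is ensuring that ${\bf v}_f$ is a universal witness valid for every derivation simultaneously, rather than varying with the particular $\d$ in play. This is resolved precisely by the ordering above: the subsequential weak-star limit of $\nabla f_\e$ is extracted \emph{before} any derivation enters the discussion, and since $L^\infty(\R^n,\nu)^n$ records only the data $f$ and $\nu$, the resulting ${\bf v}_f$ automatically validates the identity against every $\d$. A secondary technical point, that an unbounded $f$ yields an unbounded $f_\e$ so (\ref{defn_derivation}.B) does not apply verbatim, is handled by the locality-based extension of $\d$ to $\Lip(\R^n)$: work on each ball $B$ with a truncation of $f$ to a bounded Lipschitz function outside a slightly larger ball, run the argument there, and patch the resulting local vector fields into a single ${\bf v}_f$ via Lemma \ref{lemma_locality}.
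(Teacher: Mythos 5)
The paper does not give a proof of Lemma \ref{lemma_chainrule}; it cites the result from \cite[Lem 2.19]{Gong_rigidity}, so there is no in-text argument to compare yours against. That said, your proposal is a sound and fairly standard route, and the structural point you flag as the ``main obstacle'' is exactly the right one: extracting the weak-star sublimit ${\bf v}_f$ of $\nabla f_{\e_k}$ in $L^\infty(\R^n,\nu)^n$ \emph{before} any derivation enters makes ${\bf v}_f$ a function of $(f,\nu)$ alone, so a single witness validates the identity $\d f = {\bf v}_f \cdot \d\id_{\R^n}$ against every $\d \in \U(\R^n,\nu)$ by the $L^1$-pairing argument you describe. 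Two places deserve tightening. First, polynomials are not in $\Lip_b(\R^n)$, so the Leibniz-rule induction and the subsequent weak-star passage from polynomials to a $C^1$ function should both be carried out inside $\Lip_b(B)$ for a fixed ball $B$ (using Lemma \ref{lemma_locality} to identify $\U(B,\nu)$ with restrictions of derivations on $\R^n$) rather than on all of $\R^n$; your locality remark at the end should be understood as applying to these intermediate stages, not just to the final mollification of $f$. Second, the final ``patching'' is in fact vacuous: the mollification $f_\e = f * \rho_\e$ and hence $\nabla f_\e$ are globally defined even for unbounded Lipschitz $f$, so the weak-star sublimit ${\bf v}_f$ is already a single global element of $L^\infty(\R^n,\nu)^n$. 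Locality is only needed to verify that $\d f_{\e_k}|_B \wsto \d f|_B$, which follows by replacing $f$ with a bounded Lipschitz extension of $f|_{2B}$ (for which $f_\e$ agrees on $B$ once $\e$ is small), applying weak continuity there, and then restricting.
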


As a warning, the proof of Theorem \ref{thm_tanderiv} is long and involved, so it is split into four steps for the convenience of the reader.  Step 3 is the most technical part, but the idea is simple:\ the ``zooming in'' process for tangent measures can be unraveled into a ``zooming out'' process for Lipschitz functions, which in turn is compatible with the weak-star topology of $\Lip_b(\Omega)$.  A careful argument using the predual $AE(X_2)$ explicitly ensures uniformity of the zooming process, so $[T_a\d]$ will be well-defined whenever $\d \in \U(\Omega,\mu)$.

\begin{proof}[Proof of Theorem \ref{thm_tanderiv}]
For $\nu \in \Tan(\mu,a)$ and $j \in \N$, let $(c_j)_{j=1}^\infty, (r_j)_{j=1}^\infty$ be its associated parameters as in Definition \ref{defn_tanmeas}, and put 
$$
\nu_j \;:=\; c_j(T_{a,r_j})_\#\mu.
$$
Since $C_b(\Omega)$ is separable, the weak-star topology of Radon measures is metrizable, so the sequence $(\nu_j)_{j=1}^\infty$ must be bounded in the total variation norm.

\vspace{.05in}
\noindent
{\em Step 1:\ Defining $T_a\d$}.\
For each $\d \in \U(\R^n,\mu)$, Lemma \ref{lemma_pushfwd} implies that
$$
\d_j \;:=\; r_j(T_{a,r_j})_\#\d
$$
is well-defined in $\U(\R^n,\nu_j)$, and for each $f \in \Lip(\R^n)$, we obtain a signed measure
$$
d\nu_{\d,j}(x) \;:=\; \d_jf(x) \,d\nu_j(x)
$$
with uniform bounds for the total variation norm.  To see this, letting $\varphi \in C^0_c(\R^n)$ with $\|\varphi\|_\infty \leq 1$ and applying \eqref{eq_derivop}, we estimate
\begin{eqnarray}
\left| \int_{T_{a,r_j}^{-1}(\Omega)} \varphi\, d\nu_{\d,j} \right| &=&
\left| \int_{T_{a,r_j}^{-1}(\Omega)} \varphi\, \d_jf \,d\nu_j \right| \;=\;
\notag
c_jr_j \left| \int_\Omega (\varphi \circ T_{a,r_j}) \d(f \circ T_{a,r_j}) \,d\mu \right| \\ &\leq&
\notag
c_jr_j \|\d(f \circ T_{a,r_j})\|_{L^\infty(X,\mu)} \int_\Omega |\varphi \circ T_{a,r_j}| \,d\mu \\ &\leq&
\label{eq_tanderiv1}
r_j \|\d\|_{\rm op} \, L(f \circ T_{a,r_j}) \int_{T_{a,r_j}^{-1}(\Omega)} |\varphi|\, c_j d(T_{a,r_j})_\#\mu \\ &\leq&
\notag
\|\d\|_{\rm op} \, L(f) \, r_j \, L(T_{a,r_j}) \int_{T_{a,r_j}^{-1}(\Omega)} |\varphi| \,d\nu_j \\ &\leq&
\label{eq_tanderiv2}
\|\d\|_{\rm op} \, L(f) \, \sup_j \|\nu_j\|_{\rm op} \;<\; \infty
\end{eqnarray}
where \eqref{eq_tanderiv1} follows from boundedness of $\d$ and \eqref{eq_tanderiv2} follows from $L(T_{a,r_j}) = r_j^{-1}$; taking suprema over $\|\varphi\|_\infty \leq 1$, we obtain the desired norm bound.

By weak-star compactness of signed Radon measures, there is a convergent subsequence $(\nu_{\d,j_k})_{k=1}^\infty$ with a weak-$*$ limit $\nu_\d$.  By similar estimates as above, 
$$
h \;\mapsto\; \int_{\R^n} h \,d\nu_\d
$$
is a well-defined element of $[L^1(\R^n,\nu)]^*$; since $\nu$ is Radon, we have 
$$
L^\infty(\R^n,\nu) \;\cong\; [L^1(\R^n,\nu)]^*
$$
and thus there is a unique $\lambda_{\d,f} \in L^\infty(\R^n,\nu)$ that satisfies
$d\nu_\d = \lambda_{\d,f} d\nu$.
The operator $[T_a\d] : \Lip(\R^n) \to L^\infty(\R^n,\nu)$ is thereby defined as
\begin{equation} \label{eq_tanderiv}
[T_a\d]f \;:=\; \lambda_{\d,f}.
\end{equation}
\noindent
{\em Step 2:\ For smooth $f$, sublimits are limits}.\
By iterating the argument in Step 1 with $h = x_i$ for $i = 1, 2, \ldots n$ and taking nested subsequences of $(j_k)_{k=1}^\infty$, we obtain well-defined functions $\{ ([T_a\d])x_i \}_{i=1}^n$ in $L^\infty(\R^n,\nu)$ via a {\bf fixed} subsequence of $(r_j)_{j=1}^\infty$.  With abuse of notation, the same symbols $(r_j)$ will denote this subsequence.  We also write ${\bf x} = \id_{\R^n}$ for short.

For $g \in C^1(\R^n)$, the Chain Rule (Lemma \ref{lemma_chainrule}) implies that
$\d_jg = \nabla g \cdot \d_j\x$
and hence, by approximation of $L^1(\R^n,\nu)$ with continuous functions, we have
\begin{equation} \label{eq_smoothchainrule}
[T_a\d] g \;=\; \nabla g \cdot [T_a\d]\x
\end{equation}
As a result, the RHS is independent of the choice of subsequence $(\nu_{\d,j_k})_{k=1}^\infty$ taken in the construction of $[T_a\d] g$.  It is not only a weak-star sublimit, but a full limit:
$$
[T_a\d] g \, d\nu \;=\; \wslim_{j \to \infty} \, \d_jg \, d\nu_j.
$$
As a consequence, $T_a\d$ is linear on $C^1(\R^n) \cap \Lip(\R^n)$, since
\begin{eqnarray*}
[T_a\d](g_1 + g_2) \, d\nu &=& 
\wslim_{j \to \infty} \, \d_j(g_1 + g_2) \, d\nu_j \\ &=&
\Big( \wslim_{j \to \infty} \, \d_jg_1 \, d\nu_j \Big) \,+\,
\Big( \wslim_{j \to \infty} \, [T_a\d] g_2 \, d\nu_j \Big) \\ &=&
([T_a\d] g_1 \,+\, [T_a\d] g_2) \, d\nu
\end{eqnarray*}
holds, under the topology of signed measures,
and it similarly satisfies the Leibniz rule for the same subclass of functions.

\vspace{.05in}
\noindent
{\em Step 3:\ Sublimits are always limits}.\
For nonsmooth $f \in \Lip(\R^n)$, let $t > 0$ and consider smooth, symmetric mollifiers $\eta_t : \R^N \to [0,\infty)$, supported on $\bar{B}(0,t)$, and put $f_t := f * \eta_t$.  Clearly $(f_t)_{t > 0}$  converges uniformly to $f_0 := f$, as
\begin{equation} \label{eq_unifconv}
|f(x) - f_t(x)| \;\leq\;
\int_{\R^n} |f(x) - f(y)| \eta_t(y) \,dy \;\leq\;
\sup_{B(x,t)} |f - f(x)| \;\leq\; L(f)\,t.
\end{equation}
Moreover, the sequence is uniformly $L(f)$-Lipschitz, with norm bounds
$$
\|\nabla f_t\|_{L^\infty(\R^n,\nu)} \;\leq\; L(f_t) \;\leq\; L(f) \,<\; \infty
$$
for all $t > 0$, so by weak-$*$ compactness in $L^\infty(\R^n,\nu)$, there exist a subsequence $(t_i)_{i=1}^\infty$ and a vectorfield ${\bf v}_f : \R^n \to \R^n$ so that $\nabla f_{t_i} \wsto {\bf v}_f$ in $L^\infty(\R^n,\nu)$.

\begin{claim} \label{claim_sublimit}
A Chain Rule holds for $T_a\d$:\ i.e.\ for all $f \in \Lip(\R^n)$, we have
$$
[T_a\d] f \;=\; {\bf v}_f \cdot [T_a\d]\x \; \; \nu\text{-a.e.\ on } \R^n.
$$
\end{claim}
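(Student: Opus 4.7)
\medskip
\noindent\emph{Proof plan.} The strategy is to bootstrap Step 2's Chain Rule --- which holds on $C^1(\R^n) \cap \Lip(\R^n)$ --- up to all of $\Lip(\R^n)$ through the mollifications $(f_{t_i})_{i=1}^\infty$ already fixed above. Applying Step 2 to each $f_{t_i}$ yields
$$[T_a\d]f_{t_i} \;=\; \nabla f_{t_i} \cdot [T_a\d]\x \quad \text{in } L^\infty(\R^n,\nu).$$
Since $\nabla f_{t_i} \wsto {\bf v}_f$ in $L^\infty(\R^n,\nu)$ by the choice of the subsequence $(t_i)$, and since multiplication by the fixed element $[T_a\d]\x \in L^\infty(\R^n,\nu)$ preserves weak-$*$ convergence of $L^\infty$-sequences under pairing with any $\varphi \in L^1(\R^n,\nu)$, the right-hand side above converges weak-$*$ to ${\bf v}_f \cdot [T_a\d]\x$. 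The claim therefore reduces to proving that $[T_a\d]f_{t_i} \wsto [T_a\d]f$ in $L^\infty(\R^n,\nu)$.

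For this reduction, fix $\varphi \in C^0_c(\R^n)$ and let $(j_k)$ be a subsequence along which $\d_{j_k}f\,d\nu_{j_k} \wsto [T_a\d]f\,d\nu$, as furnished by Step 1. For each $i$, decompose
$$\int \varphi\,\d_{j_k}f\,d\nu_{j_k} \;=\; \int \varphi\,\d_{j_k}f_{t_i}\,d\nu_{j_k} \,+\, \int\varphi\,\d_{j_k}(f - f_{t_i})\,d\nu_{j_k}.$$
Letting $k \to \infty$ with $i$ fixed, Step 2 identifies the first summand as $\int \varphi\,\nabla f_{t_i}\cdot [T_a\d]\x\,d\nu$, which then tends to $\int\varphi\,{\bf v}_f\cdot [T_a\d]\x\,d\nu$ as $i \to \infty$ by the discussion of the previous paragraph. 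The left-hand side tends to $\int\varphi\,[T_a\d]f\,d\nu$ by definition of the sublimit. The identification $[T_a\d]f = {\bf v}_f\cdot [T_a\d]\x$, tested against arbitrary $\varphi \in C^0_c(\R^n)$, therefore follows once the error $\int\varphi\,\d_{j_k}(f - f_{t_i})\,d\nu_{j_k}$ is shown to vanish as $i\to\infty$ \emph{uniformly in} $k$.

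The main obstacle is precisely this uniform smallness of the error, and it is what motivates the hint about the predual. The naive bound from \eqref{eq_tanderiv2} only gives
$$\Bigl|\int \varphi\,\d_{j_k}(f - f_{t_i})\,d\nu_{j_k}\Bigr| \;\leq\; \|\varphi\|_\infty \,\|\d\|_{\rm op}\, L(f - f_{t_i}) \,\sup_k \|\nu_{j_k}\|_{\rm op},$$
which is uniformly controlled in $k$ but does not vanish with $i$. To extract true smallness, one invokes the predual description of \S\ref{subsubsect_predual}: since $f - f_{t_i} \to 0$ uniformly by \eqref{eq_unifconv} while remaining bounded in $\Lip_b(\R^n)$, it converges weak-$*$ to zero in $\Lip_b(\R^n) \cong [AE((\R^n)_2^+)]^*$, so by the weak continuity property (\ref{defn_derivation}.B), each $\d_{j_k}$ sends $f - f_{t_i}$ to an element that is weak-$*$ small in $L^\infty(\R^n,\nu_{j_k})$. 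The role of the predual is to make this smallness uniform across $k$: by representing $f - f_{t_i}$ via finitely-supported molecules of $\widetilde{AE}((\R^n)_2^+)$ whose $AE$-norms are controlled by $\|f - f_{t_i}\|_\infty + L(f - f_{t_i})\,\mathrm{diam}(\mathrm{supp}\,\varphi)$, and then pulling them back through the scalings $T_{a,r_{j_k}}$ --- whose Lipschitz constants $r_{j_k}^{-1}$ cancel the factor $r_{j_k}$ appearing in $\d_{j_k}$, exactly as in the passage from \eqref{eq_tanderiv1} to \eqref{eq_tanderiv2} --- one obtains a bound on the error that depends on $i$ but not on $k$, and that tends to zero with $i$. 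Letting $i \to \infty$ in the decomposition then completes the identification of $[T_a\d]f$ with ${\bf v}_f \cdot [T_a\d]\x$, $\nu$-a.e.\ on $\R^n$.
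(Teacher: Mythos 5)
Your high-level plan agrees with the paper's: reduce Claim~\ref{claim_sublimit}, via \eqref{eq_smoothchainrule}, to showing $[T_a\d]f_{t_i} \wsto [T_a\d]f$ in $L^\infty(\R^n,\nu)$, by commuting the two limits ($j\to\infty$ and $t_i\to 0$). You also correctly diagnose where the difficulty lies: after rewriting $\int\varphi\,\d_{j_k}(f-f_{t_i})\,d\nu_{j_k} = \int_\Omega \psi_k\,\d(F_{0,k}-F_{i,k})\,d\mu$ with $\psi_k := c_{j_k}(\varphi\circ T_{a,r_{j_k}})$ and $F_{i,k} := r_{j_k}(f_{t_i}\circ T_{a,r_{j_k}})$, the test function $\psi_k$ changes with $k$, so the naive operator-norm bound only yields $\|\d\|_{\rm op} L(f - f_{t_i})$, which does not vanish with $i$. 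Up to here you are aligned with the paper.

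The predual argument you propose to close this gap does not work as stated, and the confusion is visible in the phrase ``representing $f - f_{t_i}$ via finitely-supported molecules of $\widetilde{AE}((\R^n)_2^+)$.'' Lipschitz functions belong to the \emph{dual} $[AE(X_2^+)]^*$, not to $AE(X_2^+)$; molecules live in the predual. What can be put in the predual is the object $\d_*\psi_k$, where $\d_* : L^1(\Omega,\mu) \to AE(\Omega_2^+)$ is the preadjoint of the weak-$*$-continuous map $\d$. One then has $\int\psi_k\,\d(F_{0,k}-F_{i,k})\,d\mu = \langle \d_*\psi_k,\, F_{0,k}-F_{i,k}\rangle$, and $\|\d_*\psi_k\|_{AE}$ is uniformly bounded since $\|\psi_k\|_{L^1(\Omega,\mu)}$ is. But boundedness in $AE$ is not enough: the pairing $\sup_{\|v\|_{AE}\le 1}|\langle v, g\rangle|$ \emph{is} the full $\Lip$-norm of $g$, so uniform smallness over a bounded family $\{\d_*\psi_k\}$ again requires $L(F_{0,k}-F_{i,k})$ small, which is false. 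To get smallness from $\|F_{0,k}-F_{i,k}\|_\infty \le L(f)t_i$ via molecule approximation, you would need the approximating molecules of $\d_*\psi_k$ to have $\ell^1$-coefficient mass bounded uniformly in $k$, and nothing in the construction provides that.

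The paper resolves this differently and you should note the contrast. Instead of estimating the $k$-indexed error term directly, it introduces the doubly-indexed sequence $F_{i,m} \in \Lip_b(\Omega)$, extracts weak-$*$ sublimits $F_i \in \Lip_b(\Omega)$ along nested subsequences, and in Claim~\ref{claim_limitsoflimits} uses the predual $AE$ and the \emph{uniform-in-$m$} estimate \eqref{eq_unifconv2} to prove $F_i \wsto F_0$ in $\Lip_b(\Omega)$ — a statement at the level of the fixed space $(\Omega,\mu)$ and the fixed derivation $\d$, where weak continuity applies cleanly. This weak-$*$ convergence of $\d F_i$, $\d F_{0,m}$, $\d F_{i,m}$ is then upgraded, via Mazur's lemma (convex combinations $\tilde F_i$) and Egorov's theorem, to almost-uniform convergence of the derivatives; it is \emph{this} upgrade, not the predual alone, that lets the final estimate tolerate the $m$-dependent test functions $\psi_m$ (whose $L^1$-norms stabilize near $\|\varphi\|_{L^1(\R^n,\nu)}$). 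Your proposal never passes to the function level in $\Lip_b(\Omega)$ and omits both Mazur and Egorov, which is precisely where the uniformity is actually obtained.
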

Equivalently by \eqref{eq_smoothchainrule}, it suffices to show that in $L^\infty(\R^N,\nu)$,
\begin{equation} \label{eq_tanchainrule}
[T_a\d] f \;=\;
\wslim_{i \to \infty} \, [T_a\d] f_{t_i}.
\end{equation}
To this end, for $t \geq 0$ and $j \in \N$, we estimate
$$
r_j\left|f_t(T_{a,r_j}(x)) \,-\, f_t(T_{a,r_j}(y))\right| \;=\;
r_j\left|f_t\Big(\frac{x-a}{r_j}\Big) \,-\, f_t\Big(\frac{y-a}{r_j}\Big)\right| \;\leq\;
L(f) \, |x-y|
$$
so the sequence $\{ r_j (f_t \circ T_{a,r_j}) \}_{j=1}^\infty$ is $L(f)$-Lipschitz for every $t$, and hence bounded in $\Lip_b(\Omega)$.  Moreover, since $T_{a,r_j}: \R^n \to \R^n$ is bi-Lipschitz, it is clear that
\begin{equation} \label{eq_zoomoutfn}
\d\left[ r_j (f_t \circ T_{a,r_j}) \right] \;=\;
r_j \, [(T_{a,r_j})_\#\d]f_t \circ T_{a,r_j} \;=\;
(\d_jf_t) \circ T_{a,r_j}.
\end{equation}
Fixing $f_{t_0} := f$ for now, by Lemma \ref{lemma_dualbanach} and weak-star compactness of $\Lip_b(\R^n)$
there exists a subsequence of functions
$$
F_{0,k} \;:=\; r_{j_k} (f_{t_0} \circ T_{a,r_{j_k}})
$$
that converges to some $F_0$ in $\Lip_b(\Omega)$.  Similarly, from $\{r_{j_k} (f_{t_1} \circ T_{a,r_{j_k}})\}_{k=1}^\infty$ there is a weak-star convergent subsequence $\{F_{1,m}\}_{m=1}^\infty$ with limit $F_1$ in $\Lip_b(\Omega)$.

Proceeding by induction, there is a countable collection of weak-star convergent sequences $\{F_{i,m}\}_{m=1}^\infty$ with limit functions $F_i \in \Lip_b(\Omega)$, where $i = 0, 1, 2, \ldots$ and where the indices $m$ of the sequence $\{F_{i,m}\}_{m=1}^\infty$ arise from the indices $m'$ of the previous sequence $\{F_{i-1,m'}\}_{m'=1}^\infty$.  
\begin{claim} \label{claim_limitsoflimits}
$F_i \wsto F_0$ holds in $\Lip(\Omega)$.
\end{claim}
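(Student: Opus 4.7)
The plan is to prove the stronger statement that $F_i = F_0$ identically for every $i \ge 1$, which trivially yields the claimed weak-star convergence. Two ingredients drive the argument:\ the nested subsequence structure of the inductive construction, and the uniform mollification estimate $\|f_t - f\|_\infty \le L(f)\,t$ from \eqref{eq_unifconv}. The shrinking scale factor $r_j$ in front of each $F_{i,m}$ will annihilate the mollification error in the limit, forcing every weak-star limit $F_i$ to coincide with $F_0$.

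First I would unwind the bookkeeping of the induction. By construction the indices $m$ defining $\{F_{i,m}\}_m$ are drawn from those defining $\{F_{i-1,m'}\}_{m'}$, so iterating back to $i=0$ shows that the sequence $(F_{i,m})_m$ is indexed by some subsequence $(r_{j_{k(i,m)}})_m$ of $(r_{j_k})_k$. Consequently the companion sequence $\{r_{j_{k(i,m)}}\,(f\circ T_{a,r_{j_{k(i,m)}}})\}_m$ is a genuine subsequence of $\{F_{0,k}\}_k$ and hence still converges weak-star in $\Lip_b(\Omega)$ to $F_0$. Both sequences lie in a common bounded subset of $\Lip_b(\Omega)$:\ each member is $L(f)$-Lipschitz, as in the computation preceding \eqref{eq_zoomoutfn}, and since $\Omega$ is bounded the $L^\infty$ norm on $\Omega$ is controlled by $L(f)\,\mathrm{diam}(\Omega) + r_{j_{k(i,m)}}|f_{t_i}(0)|$, which stays bounded for small $r_{j_{k(i,m)}}$. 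Lemma \ref{lemma_dualbanach} therefore identifies weak-star convergence with pointwise convergence on these sequences.

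Fix $x \in \Omega$. Pointwise we then have
\[
F_i(x) \,=\, \lim_{m\to\infty} r_{j_{k(i,m)}}\, f_{t_i}\bigl(T_{a,r_{j_{k(i,m)}}}(x)\bigr), \qquad F_0(x) \,=\, \lim_{m\to\infty} r_{j_{k(i,m)}}\, f\bigl(T_{a,r_{j_{k(i,m)}}}(x)\bigr),
\]
and \eqref{eq_unifconv} bounds the pointwise difference of the two sequences by $r_{j_{k(i,m)}}\,L(f)\,t_i$, which tends to $0$ as $m\to\infty$ since $r_{j_{k(i,m)}}\to 0$ while $t_i$ is fixed. Hence $F_i(x) = F_0(x)$ and, $x$ being arbitrary, $F_i \equiv F_0$, so in particular $F_i \wsto F_0$. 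The main obstacle is really bookkeeping rather than analysis:\ one must track carefully which subsequence is used at each inductive stage so that the companion sequence (with $f$ in place of $f_{t_i}$) is a true subsequence of $\{F_{0,k}\}_k$, and verify the uniform Lipschitz and $L^\infty$ bounds that unlock Lemma \ref{lemma_dualbanach}. Once these are in hand, the multiplicative damping by $r_{j_{k(i,m)}}$ dissolves the mollification error $t_i$ in the limit, and the equality $F_i = F_0$ drops out.
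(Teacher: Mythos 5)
Your proof is correct and in fact establishes the stronger statement that $F_i \equiv F_0$ for every $i$, which trivially implies the claimed weak-star convergence. The decisive observation is that you retain the multiplicative factor $r_m$ when estimating the pointwise difference, giving $|F_{i,m}(x)-F_{0,m}(x)| \le r_m\,\|f_{t_i}-f\|_\infty \le r_m\,L(f)\,t_i$, which tends to $0$ as $m\to\infty$ with $i$ fixed because $r_m\to 0$. The paper's estimate \eqref{eq_unifconv2} instead drops the $r_m$ factor (using $r_m\le 1$ for large $m$) and settles for the bound $L(f)\,t_i$, which is uniform in $m$ but does not vanish for fixed $i$; the paper therefore needs the subsequent $\e/4$-argument with molecules in the predual $AE(X_2^+)$ to pass to weak-star convergence. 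Your argument bypasses the predual machinery entirely: the nested-subsequence bookkeeping ensures that the companion sequence $(F_{0,k(i,m)})_m$ is a genuine subsequence of $(F_{0,k})_k$ and hence, by Lemma~\ref{lemma_dualbanach}, converges pointwise to $F_0$; the sharper bound then forces $F_i(x)=F_0(x)$ for all $x$. Both routes are valid; yours is more elementary and reveals that the iterated subsequence construction actually pins each $F_i$ to $F_0$ exactly rather than merely up to a weak-star limit.
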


Indeed, for any $m \in \N$ with corresponding radii $r_m > 0$, inequality \eqref{eq_unifconv} gives
\begin{equation} \label{eq_unifconv2}
|F_{i,m} \,-\, F_{0,m}| \;\leq\;
r_m\,|(f_{t_i} \,-\, f) \circ T_{a,r_m}| \;\leq\;
\|f_{t_i} - f\|_\infty \;\leq\; L(f) \, t_i,
\end{equation}
so $F_{i,m} \to F_{0,m}$ is {\em uniformly} convergent in $\Omega$.  With the predual $AE(X_2^+)$ defined as in \S\ref{subsubsect_predual} and given $v \in AE(X_2^+)$ and $\e > 0$, choose $\tilde{v} \in \widetilde{AE}(X_2^+)$ of the form
$$
\tilde{v} \;=\; \sum_{i=1}^n a_i \, (\chi{\{x_i\}} - \chi{\{y_i\}})
$$
and which satisfies the norm bound
$\|v - \tilde{v}\|_{AE} < \frac{\e}{8L(f)}$.
Choose $i \in \N$ so that
$$
t_i \;<\; \Big(4L(f)\sum_{x \in X}|\tilde{v}(x)|\Big)^{-1}\e
$$
from which it follows from \eqref{eq_unifconv2} and the duality $\Lip_b(X) \cong [AE(X_2^+)]^*$ that
\begin{eqnarray*}
\sup_m |\langle \tilde{v}, F_{i,m} \,-\, F_{0,m} \rangle| &\leq&
\sum_{x \in X} |\tilde{v}(x)| \left\{ \sup_m \big| F_{i,m}(x) \,-\, F_{0,m}(x) \big| \right\} \\ &\leq&
L(f) t_i \sum_{x \in X} |\tilde{v}(x)| \;<\; \frac{\e}{4}
\end{eqnarray*}
With $i$ now fixed, now choose $m \in \N$ sufficiently large so that
$$
|\langle \tilde{v},\, F_i - F_{i,m} \rangle| \;\leq\; \frac{\e}{4} \text{ and }
|\langle \tilde{v},\, F_{0,m} - F_0 \rangle| \;\leq\; \frac{\e}{4}
$$
and hence Claim \ref{claim_limitsoflimits} follows from the above estimates and the Triangle inequality:
\begin{eqnarray*}
|\langle v,\, F_i - F_0 \rangle| &\leq&
|\langle v- \tilde{v},\, F_i - F_{i,m} \rangle| \,+\,
|\langle \tilde{v},\, F_i - F_{i,m} \rangle| \\ && \,+\,
|\langle \tilde{v},\, F_{i,m} - F_{0,m} \rangle| \,+\,
|\langle \tilde{v},\, F_{0,m} - F_0 \rangle| \\ &\leq&
\|v- \tilde{v}\|_{AE} \|F_i - F_{i,m}\|_{\Lip} \,+\, \frac{3\e}{4} \;\leq\; \e.
\end{eqnarray*}
Invoking weak continuity, each sequence $\{\d F_{i,k}\}_{k=1}^\infty$ converges weak-star to $\d F_i$ in $L^\infty(\R^n,\mu)$ and in turn, $\{\d F_i\}_{i=1}^\infty$ converges weak-star to $\d F_0$.  

Since $\mu$ is Radon and $\Omega$ is bounded, we have that for each $p \in (1,\infty)$,
$$
L^\infty(\Omega,\mu) \;\subset\; L^p(\Omega,\mu)
$$
and that $L^{p'}(\Omega,\mu)$ is dense in $L^1(\Omega,\mu)$, for $p' := \frac{p}{p-1}$.  It follows that the above sequences also converge weakly in $L^p(\Omega,\mu)$; by reflexivity for $1 < p < \infty$ and Mazur's lemma, there exist convex combinations $\{\d\tilde{F}_i\}_{i=1}^\infty$ of $\{\d F_i\}_{i=1}^\infty$ that converge in $L^p$-norm to $\d F_0$, so a subsequence (denoted with the same symbols) converges pointwise $\mu$-a.e.\ on $\Omega$.  The same functional analysis argument applies to each $i \in \N$, so there exist convex combinations $\{\d\tilde{F}_{i,m}\}_{m=1}^\infty$ which contain subsequences that converge $\mu$-a.e.\ on $\Omega$ to $\d\tilde{F}_i$.  

Let $\psi \in C_c(\Omega)$ and $\e > 0 $ be given and put $C_\psi := \|\psi\|_{L^1(\Omega,\mu)}$ for short.
By Egorov's theorem, apart from a subset $E \subset \Omega$ of $\mu$-measure at most 
$$
\mu(E) \;\leq\; \frac{\e}{16\|\psi\|_\infty\|\d\|_{\rm op}\, L(f)}
$$
the convergence $\d\tilde{F}_i \to \d F_0$ is uniform on $\Omega \setminus E$.  Choosing $i \in \N$ so that
$$
\| \d(F_0 - \tilde{F}_i) \|_{L^\infty(\Omega \setminus E,\mu)} \;\leq\; 
\frac{\e}{8C_\psi}
$$
with \eqref{eq_derivop} we may now estimate as follows:
\begin{eqnarray*}
\left| \int_{\R^n} \psi \, \d(F_0 - \tilde{F}_i) \,d\mu \right| &\leq&
\left| \int_{E} |\psi| \, \d(F_0 - \tilde{F}_i) \,d\mu \right| \,+\, 
\int_{\R^n \setminus E} |\psi| \, |\d(F_0 - \tilde{F}_i)| \,d\mu \\ &\leq&
2\|\psi\|_\infty\|\d\|_{\rm op}\, L(f) \, \mu(E) \,+\, 
C_\psi \| \d(F_0 - \tilde{F}_i) \|_{L^\infty(\Omega \setminus E,\mu)} \\ &\leq&
\frac{\e}{4}.
\end{eqnarray*}
The Egorov argument also applies to $\d\tilde{F}_{0,m} \to \d\tilde{F}_0$ and to $\d\tilde{F}_{i,m} \to \d\tilde{F}_i$, so with appropriate subsets $E_0, E_i \subset \Omega$ of small $\mu$-measure, we analogously obtain
$$
\left| \int_{\R^n} \psi \, \d(\tilde{F}_0 - \tilde{F}_{0,m}) \,d\mu \right| \;\leq\; \frac{\e}{8} 
\, \text{ and } \,
\left| \int_{\R^n} \psi \, \d(\tilde{F}_i - \tilde{F}_{i,m}) \,d\mu \right| \;\leq\; \frac{\e}{8}.
$$
So to prove Claim \ref{claim_sublimit}, let $\varphi \in C_c(\R^n)$ be arbitrary and choose $m \in \N$ so that, with the identity \eqref{eq_zoomoutfn}, we have
\begin{eqnarray*}
\left| \int_{\R^n} \varphi ([T_a\d] f - [T_a\d]\tilde{f}_{t_i}) d\nu \right| &\leq&
\left| \int_{T_{a,r_j}^{-1}(\Omega)} \varphi \, \d_m(f - \tilde{f}_{t_i}) d\nu_m \right| \,+\, \frac{\e}{4} \\ &=&
\left| c_m \int_\Omega (\varphi \circ T_{a,r_m}) \, \d(\tilde{F}_{0,m} - \tilde{F}_{i,m}) d\mu \right| \,+\, \frac{\e}{4}.
\end{eqnarray*}
where $(r_m)_{m=1}^\infty$ is the iterated subsequence of radii, associated to the construction of the $\{F_{i,m}\}$.  Putting $\psi_m := c_m (\varphi \circ T_{a,r_m})$ and recalling Definition \ref{defn_tanmeas}, by choosing $m$ larger as necessary, we have
$$
\frac{1}{2}\|\varphi\|_{L^1(\R^n,\nu)} \;\leq\;
\|\psi_m\|_{L^1(\Omega,\mu)} \;\leq\; 2\|\varphi\|_{L^1(\R^n,\nu)}
$$
Thus the previous estimates, with $\psi_m$ in place of $\psi$, come together as
\begin{equation*}
\begin{split}
&
\left| \int_{\R^n} \varphi ([T_a\d] f - [T_a\d]\tilde{f}_{t_i}) d\nu \right| \;\leq\;
\left| c_m \int_{\R^n} (\varphi \circ T_{a,r_m}) \, \d(\tilde{F}_{0,m} - \tilde{F}_{i,m}) d\mu \right| \,+\, \frac{\e}{4} \\
& \hspace{.4in} \;=\;
\left| \int_{\R^n} \psi_m \Big\{
\d(\tilde{F}_{0,m} - \tilde{F}_{i,m}) \,+\,
\d(\tilde{F}_{0,m} - \tilde{F}_{i,m}) \,+\,
\d(\tilde{F}_{0,m} - \tilde{F}_{i,m}) \Big\} d\mu \right| \,+\, \frac{\e}{4} \\
& \hspace{.4in} \;\leq\;
\frac{\e}{4} \,+\, \frac{\e}{4} \,+\, \frac{\e}{4} \,+\, \frac{\e}{4} \;=\; \e.
\end{split}
\end{equation*}
Since $\e > 0$ was arbitrary, we conclude that
$$
\lim_{i \to \infty} \int_{\R^n} \varphi ([T_a\d] f - [T_a\d]\tilde{f}_{t_i}) d\nu \;=\; 0
$$
and since $C_c(\R^n)$ is dense in $L^1(\R^n,\nu)$, Claim \ref{claim_sublimit} follows, with a modified subsequence $\{\tilde{f}_{t_i}\}_{i=1}^\infty$ in place of the original $\{f_{t_i}\}_{i=1}^\infty$ for \eqref{eq_unifconv2}.

\vspace{.05in}
\noindent
{\em Step 4:\ Each $T_a\d$ is a derivation}.\
By similar arguments as in Step 2, each $T_a\d$ is linear and satisfies the Leibniz rule.

As for weak continuity, let $(f_n)_{n=1}^\infty$ be a bounded sequence in $\Lip_b(\R^n)$ that converges pointwise to $f$, and let $\psi \in L^1(\R^n,\nu)$ and $\e > 0$ be arbitrary.  Since continuous functions are dense in $L^1(\R^n,\nu)$, there exists $\varphi \in C_c(\R^n)$ so that
$$
\|\psi - \varphi\|_{L^1(\R^n,\nu)} \;\leq\; \frac{\e}{3 \, \sup_{j \in \N} \|\d(f_j - f)\|_{L^\infty(\R^n,\nu)}}
$$
and for sufficiently large $j \in \N$, we have
$$
\left| \int [T_a\d](f_n - f) \, \varphi \,d\nu \right| \;\leq\; 
\left| \int \d_j(f_n - f) \, \varphi \,d\nu_j \right| \,+\, \frac{\e}{3}.
$$
Since $\d_j$ is a derivation, we already have $\d_jf_n \wsto \d_jf$ in $L^\infty(\R^n,\nu_j)$, so choose $n \in \N$ sufficiently large as to guarantee
$$
\left| \int \d_j(f_n - f) \, \varphi \,d\nu_j \right| \;\leq\; \frac{\e}{3}.
$$
Combining the last three estimates, the Triangle inequality implies that
$$
\left| \int [T_a\d](f_n - f) \, \psi \,d\nu \right| \;\leq\; \e,
$$
so $T_a\d$ is weakly continuous.  The theorem follows.
\end{proof}

Lastly, we note that the rank of derivation modules does not decrease under the process of taking tangent measures.  This relies on a criterion for linear independence of derivations \cite[Lem 2.12]{Gong_diffstruct}, of which one version is stated below.

\begin{lemma} \label{lemma_jacobian}
Let $\mu$ be a Radon measure on $\R^n$ and fix $(\d_i)_{i=1}^n \subset \U(\R^n,\mu)$.  If ${\bf d} := (\d_i)_{i=1}^n$ is linearly independent then the matrix-valued function 
$$
{\bf d}{\bf x} \;:=\; [\d_ix_k]_{i,k=1}^n
$$
is $\mu$-a.e.\ non-singular on $\R^n$. Moreover,
there exists a linearly independent set $\hat{\bf d} := (\hat\d_i)_{i=1}^n$ in $\U(\R^n,\mu)$ with the same span as ${\bf d}$ and is orthogonal in that 
$$
\text{if } i \,\neq\, k, \text{ then } \hat\d_ix_k \,=\, 0.
$$
\end{lemma}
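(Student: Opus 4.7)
The strategy is to reduce both assertions to linear algebra by means of the Chain Rule (Lemma \ref{lemma_chainrule}), which says that every $\d \in \U(\R^n,\mu)$ is determined $\mu$-a.e.\ by the tuple $(\d x_1, \ldots, \d x_n)$.

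For the non-singularity assertion, I would argue by contradiction: suppose $M(x) := [\d_i x_k(x)]_{i,k=1}^{n}$ is singular on a Borel set $E$ with $\mu(E) > 0$.  A standard measurable-selection argument (e.g.\ selecting at each $x \in E$ a unit vector in the left-kernel of $M(x)$) produces $\lambda_1, \ldots, \lambda_n \in L^\infty(\R^n,\mu)$, supported on $E$ and not identically zero, with $\sum_{i=1}^n \lambda_i(x)\,(\d_i x_k)(x) = 0$ for every $k$ and $\mu$-a.e.\ $x$.  The derivation $\d' := \sum_i \lambda_i \d_i \in \U(\R^n,\mu)$ then annihilates every coordinate, so the Chain Rule forces $\d' f = \sum_k v_f^k\, \d' x_k = 0$ for all $f \in \Lip(\R^n)$.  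Hence $\d' = 0$, contradicting the linear independence of $(\d_i)_{i=1}^n$.

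For the orthogonalization, with $M$ now $\mu$-a.e.\ invertible, I would use the classical adjugate $B := \operatorname{adj}(M)$.  Its entries are polynomials in the $L^\infty$-entries of $M$ and hence are themselves bounded, so $\hat{\d}_i := \sum_{j=1}^n B_{ij}\,\d_j$ is well-defined in $\U(\R^n,\mu)$.  The identity $BM = (\det M)\,I$ yields $\hat{\d}_i x_k = (\det M)\,\delta_{ik}$, which supplies the required orthogonality.  Linear independence of $\hat{\bf d}$ follows by rerunning the argument above: any relation $\sum_i \nu_i \hat{\d}_i = 0$ forces $\nu B = 0$ $\mu$-a.e., while $\det B = (\det M)^{n-1} \neq 0$ $\mu$-a.e.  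For the same-span claim I would partition $\R^n$ into the measurable pieces $E_m := \{x : 2^{-m-1} \leq |\det M(x)| < 2^{-m}\}$, $m \in \mathbb{Z}$, on each of which $B^{-1} = M/\det M$ has bounded entries, and then invoke Weaver's locality principle (Lemma \ref{lemma_locality}) to express each $\chi_{E_m}\,\d_i$ as an $L^\infty$-combination of the $\hat{\d}_j$'s on $E_m$.

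I expect the main obstacle to be the same-span claim.  The adjugate construction delivers orthogonality for free, but recovering the original $\d_j$'s as bounded combinations of the $\hat{\d}_i$'s requires inverting $B$, and $B^{-1}$ is unbounded precisely where $\det M$ is small.  Patching the local inversions on the pieces $E_m$ into a genuine equality of $L^\infty(\R^n,\mu)$-submodules of $\U(\R^n,\mu)$---rather than a merely pointwise or piecewise equality---is the step that has no analogue in finite-dimensional linear algebra and is where locality does the essential work.
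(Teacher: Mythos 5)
Your argument for the first two assertions is sound. The contradiction proof of $\mu$-a.e.\ nonsingularity of ${\bf d}{\bf x}$ via measurable selection of a unit left-kernel vector, combined with the Chain Rule (Lemma \ref{lemma_chainrule}), is exactly the right mechanism: if $\d' := \sum_i\lambda_i\d_i$ kills every coordinate then it kills every Lipschitz function, contradicting linear independence. The adjugate construction $\hat\d_i := \sum_j B_{ij}\d_j$ with $B = \operatorname{adj}({\bf d}{\bf x})$ is a clean way to achieve orthogonality while keeping coefficients in $L^\infty$, and your verification of linear independence of $\hat{\bf d}$ is correct. (The paper does not give its own proof of this lemma --- it is cited from \cite[Lem 2.12]{Gong_diffstruct} --- so there is no argument to compare against directly.)

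The same-span claim, however, is where your proposal has a genuine gap, and you have correctly sensed it but misdiagnosed how it might be repaired. Locality (Lemma \ref{lemma_locality}) gives you that each $\chi_{E_m}\d_i$ lies in $\U(E_m,\mu)$, but it provides no mechanism for converting the family of bounded-on-$E_m$ coefficient functions $\chi_{E_m}B^{-1}$ into a \emph{single} element of $L^\infty(\R^n,\mu)^{n\times n}$: after summing over $m$ you simply recover $B^{-1}={\bf d}{\bf x}/\det({\bf d}{\bf x})$, which is essentially unbounded whenever $\det({\bf d}{\bf x})$ accumulates at $0$ on a positive-measure set. In fact, with the naive notion of span (finite $L^\infty$-linear combinations) the claim can actually \emph{fail}: take $\mu=\H^2\lfloor(0,1)^2$, $\d_1=\partial_1+\partial_2$, $\d_2=\partial_1+(1+x_1)\partial_2$, so $\det({\bf d}{\bf x})=x_1$; then $\hat\d_1=x_1\partial_1$, $\hat\d_2=x_1\partial_2$, and there is no bounded $a$ with $a\,x_1=1$ on $(0,1)$, so $\d_1\notin\operatorname{span}_{L^\infty}(\hat{\bf d})$. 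What your $E_m$-decomposition \emph{does} establish is that $\d_i$ lies in the weak-$*$ closure of $\operatorname{span}_{L^\infty}(\hat{\bf d})$ (since $\chi_{\cup_{m\le M}E_m}\d_i\wsto\d_i$ as $M\to\infty$), so if ``same span'' is interpreted as equality of weak-$*$-closed $L^\infty$-submodules the patching step goes through; but that topological upgrade is exactly the step you needed to make explicit, and ``locality does the essential work'' is not the right diagnosis --- weak-$*$ closure is. It is also worth noting that the downstream use of this lemma in Corollary \ref{cor_tanderivrank} invokes only orthogonality and linear independence, not equality of spans, so the parts of your proof that are complete suffice for the paper's purposes.
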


The next result follows purely from the locality property (Lemma \ref{lemma_locality}) and unraveling previous definitions. Since the discussion has been technical so far, the argument has been added here for clarity.

\begin{cor} \label{cor_tanderivrank}
Let $\mu$ be Radon on $\R^n$ and fix a $\mu$-density point $a \in \R^n$.  If $\nu \in \Tan(\mu,a)$ and if $\U(\R^n,\mu)$ has rank $n$, then $\U(\R^n,\nu)$ also has rank $n$.
\end{cor}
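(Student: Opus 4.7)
The plan is to produce $n$ linearly independent derivations in $\U(\R^n,\nu)$ by transporting a linearly independent system from $\U(\R^n,\mu)$ through the operator $T_a$ of Theorem \ref{thm_tanderiv}, then verifying linear independence via the Jacobian criterion of Lemma \ref{lemma_jacobian}.

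I would first apply Lemma \ref{lemma_jacobian} to extract an orthogonal linearly independent system $(\hat\d_i)_{i=1}^n \subset \U(\R^n,\mu)$ satisfying $\hat\d_i x_k = \delta_{ik}\,g_i$, where each $g_i \in L^\infty(\R^n,\mu)$ is non-zero $\mu$-a.e. Setting $\tilde\d_i := T_a\hat\d_i \in \U(\R^n,\nu)$, Theorem \ref{thm_tanderiv} guarantees each $\tilde\d_i$ is non-zero. To compute the Jacobian matrix of the new system, I would unravel the pushforward identity \eqref{eq_derivpushfwd}: the intermediate derivations $(\hat\d_i)_j := r_j(T_{a,r_j})_\#\hat\d_i \in \U(\R^n,\nu_j)$ satisfy the pointwise relation $(\hat\d_i)_j x_k(y) = \hat\d_i x_k(a+r_jy)$ at $\nu_j$-a.e.\ $y$, so when $i \neq k$ this vanishes identically. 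Passing to the weak-star limit defining $T_a\hat\d_i$ yields $\tilde\d_i x_k = 0$ in $L^\infty(\R^n,\nu)$ whenever $i \neq k$, so the Jacobian matrix $[\tilde\d_i x_k]$ is diagonal with entries $h_i := \tilde\d_i x_i$.

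It remains to show that each $h_i$ is non-zero $\nu$-a.e. By the Chain Rule (Lemma \ref{lemma_chainrule}) and the vanishing of the off-diagonal entries, $\tilde\d_i f = v_f^i h_i$ for every $f \in \Lip(\R^n)$; so if $h_i$ vanished on a Borel set $E$ with $\nu(E) > 0$, then Lemma \ref{lemma_locality} would give $\chi_E \tilde\d_i = 0$. To derive a contradiction I would use locality on the source side: for any such $E$, the preimage sets $T_{a,r_j}^{-1}(E)$ cluster at $a$ with positive $\mu$-measure by the $\mu$-density assumption, so restricting $\hat\d_i$ through characteristic functions of these preimages and transporting via $T_a$ recovers a non-trivial localization of $\tilde\d_i$ on $E$, contradicting $\chi_E \tilde\d_i = 0$ via the injectivity supplied by Theorem \ref{thm_tanderiv}. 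This locality-plus-blow-up step is the main obstacle, as it is exactly where locality (rather than pure injectivity of $T_a$) must be invoked to upgrade ``$T_a\hat\d_i \neq 0$'' to ``$h_i$ is non-zero $\nu$-a.e.''

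Granted the $\nu$-a.e.\ non-vanishing of each $h_i$, linear independence of $(\tilde\d_i)_{i=1}^n$ follows directly from the Chain Rule: any $L^\infty(\R^n,\nu)$-linear relation $\sum_i \lambda_i \tilde\d_i = 0$, evaluated on $x_k$, reduces to $\lambda_k h_k = 0$ $\nu$-a.e.\ and forces each $\lambda_k = 0$ $\nu$-a.e. Hence $\U(\R^n,\nu)$ has rank at least $n$. The matching upper bound is also immediate from the Chain Rule (Lemma \ref{lemma_chainrule}): for any $(n+1)$-tuple of derivations in $\U(\R^n,\nu)$, the associated $(n+1) \times n$ Jacobian matrix has $L^\infty$-linearly dependent rows, and the Chain Rule propagates this dependence to the derivations themselves. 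Thus $\U(\R^n,\nu)$ has rank exactly $n$.
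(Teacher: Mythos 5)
Your opening moves coincide with the paper's: apply Lemma \ref{lemma_jacobian} to orthogonalize to a system $(\hat\d_i)_{i=1}^n$ with $\hat\d_ix_k = 0$ for $i \neq k$, then show the off-diagonal entries of the transported Jacobian vanish by carrying $\hat\d_ix_k$ through the pushforward-and-limit construction of $T_a$. That part is sound.

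Where you diverge --- and where the gap sits --- is the diagonal step, and you flag it yourself as ``the main obstacle.'' Your plan is to assume $h_i := (T_a\hat\d_i)x_i$ vanishes on a set $E$ of positive $\nu$-measure, conclude via the Chain Rule and locality that $\chi_E T_a\hat\d_i = 0$, and then contradict this by ``restricting $\hat\d_i$ through characteristic functions of the preimages $T_{a,r_j}^{-1}(E)$ and transporting via $T_a$.'' This does not go through as described. Theorem \ref{thm_tanderiv} gives injectivity of $T_a$ as a map $\U(\Omega,\mu) \to \U(\R^n,\nu)$; it tells you $T_a\hat\d_i \neq 0$ globally, which is entirely consistent with $\chi_E T_a\hat\d_i = 0$ for a proper subset $E$ of positive measure. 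To upgrade global non-vanishing to $\nu$-a.e.\ non-vanishing, you would need a commutation relation of the shape $T_a(\chi_A\d) = \chi_{?}(T_a\d)$, but nothing of the sort is established --- and it is not even clear what it should say, since the candidate source sets $T_{a,r_j}^{-1}(E)$ change with $j$ and shrink to the point $a$, whereas $T_a$ is constructed as a single weak-star limit along the whole sequence. So the phrase ``recovers a non-trivial localization of $\tilde\d_i$ on $E$'' is a statement you would need to prove, and the tools cited (Lemma \ref{lemma_locality} and Theorem \ref{thm_tanderiv}) do not supply it.

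The paper's own route at this step is simpler and avoids any locality transport: after the orthogonalization, it multiplies each $\hat\d_i$ by $\chi_{\{\hat\d_ix_i > 0\}} - \chi_{\{\hat\d_ix_i < 0\}}$ so that $\hat\d_ix_i > 0$ holds $\mu$-a.e., and then obtains the $\nu$-a.e.\ positivity of $(T_a\hat\d_i)x_i$ from the same integral computation used for the off-diagonals. That sign-normalization device is the missing ingredient in your sketch; adopting it would let you bypass the locality-plus-blow-up construction entirely. Your final paragraph (the rank upper bound) is fine, although the paper simply cites this as a known fact rather than re-deriving it from the Chain Rule.
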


\begin{proof}
Let $(\hat\d_i)_{i=1}^n$ be a linearly independent set in $\U(\R^n,\mu)$ as in Lemma \ref{lemma_jacobian}.  We may assume that $\hat\d_ix_i > 0$ holds $\mu$-a.e.\ on $X$, by replacing each $\hat\d_i$ with
$$
\big(\chi_{ \{ \hat\d_ix_i > 0\} } - \chi_{ \{ \hat\d_ix_i < 0\} }\big)\hat\d_i
$$
as necessary.  Now let $\varphi \in C_c(\R^n)$ be non-negative; if $i \neq k$, then 
\begin{eqnarray*}
\int_{\R^n} \varphi \, (T_a\hat\d_i)x_k \, d\nu &=&
\lim_{j \to \infty} \int_{\R^n} \varphi \, r_j\big((T_{a,r_j})_\#\hat\d_i\big)x_k \, d(T_{a,r_j})_\#\mu \\ &=&
\lim_{j \to \infty} \int_{\R^n} (\varphi \circ T_{a,r_j}) \, r_j\hat\d_i\Big(\frac{x_k - a}{r_j}\Big) d\mu \\ &=&
\lim_{j \to \infty} \int_{\R^n} (\varphi \circ T_{a,r_j}) \, \hat\d_ix_k \, d\mu \;=\; 0,
\end{eqnarray*}
so by density of continuous functions in $L^1(\R^n,\nu)$, it follows that
$$
(T_a\hat\d_i)x_k \;=\; 0 \, \text{ $\mu$-a.e.\ on $\R^n$, whenever } \, i \neq k.
$$
The $\nu$-a.e.\ inequality $(T_a\hat\d_i)x_i > 0$ follows from a similar computation as above.
Note that if $\{\lambda_i\}_{i=1}^n \subset L^\infty(\R^n,\nu)$ satisfies $\sum_i \lambda_i (T_a\hat\d_i) = 0$, then for each $k$,
$$
0\;=\; \sum_i \lambda_i (T_a\hat\d_i)x_k \;=\;
\sum_i \lambda_i (T_a\hat\d_i)x_k \;=\;
\lambda_k \, (T_a\hat\d_k)x_k
$$
holds $\nu$-a.e.\, so $\lambda_k = 0$; as a result, $(T_a\hat\d_i)_{i=1}^n$ must be linearly independent.

It is already known that every set of $n+1$ derivations on $\R^n$ is linearly dependent for any Borel measure \cite[Lem 2.13]{Gong_diffstruct}, so the lemma follows.
\end{proof}

It would be interesting to study analogues of tangent derivations in the setting of general metric spaces, especially as some cases are known.  For instance, both the doubling condition and the Poincar\'e inequality persist under measured pointed Gromov-Hausdorff limits \cite{Cheeger}, a process which generalises the previous blow-up procedure on $\R^n$.  Cheeger's Rademacher theorem then applies to the limiting metric space and a standard argument ensures that the induced differentials are derivations \cite{Weaver}, \cite{Gong_diffstruct}, \cite{Schioppa}.

For the general case of metric spaces with an MDS, the main challenge would be to replace smooth functions in the above proof with a suitable class of Lipschitz functions whose measurable differentials are invariant under the ``zooming out'' process of weak-star limits. (We daren't pursue this here.)

\section{Lip-lip conditions on Euclidean spaces} \label{sect_eucl}

We begin with subsets of $\R^n$ and $n$-dimensional Lebesgue measure, denoted by $\H^n$.  The following result is folklore, but we include a proof for convenience.

\begin{lemma} \label{lemma_euclLiplip}
If $A \subseteq \R^n$ is Lebesgue measurable with $\H^n(A) > 0$, then the metric measure space $(A,|\cdot|,\H^n)$ satisfies a Lip-lip condition with constant $M = 1$.
\end{lemma}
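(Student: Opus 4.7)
The plan is to reduce the problem to (an application of) Rademacher's classical theorem on $\R^n$, by extending $f$ to all of $\R^n$ and then exploiting Lebesgue density of $A$ to compare $\lip[f](x)$ and $\Lip[f](x)$ with $|\nabla F(x)|$ of the extension.

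First I would note that any $f \in \Lip(A)$ admits a Lipschitz extension $F : \R^n \to \R$ with $L(F) = L(f)$, via the McShane formula. By Rademacher's theorem, $F$ is differentiable $\H^n$-a.e.\ in $\R^n$, and by the Lebesgue density theorem, $\H^n$-a.e.\ $x \in A$ is a point of density one of $A$.  Fix any such $x_0 \in A$.  Since $f = F|_A$, the trivial inequality
$$
\Lip[f](x_0) \;\leq\; \Lip[F](x_0) \;=\; |\nabla F(x_0)|
$$
follows immediately from differentiability of $F$ at $x_0$.

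The main point is the reverse-type inequality $\lip[f](x_0) \geq |\nabla F(x_0)|$.  Assuming $|\nabla F(x_0)| > 0$ (otherwise there is nothing to prove), set $v := \nabla F(x_0)/|\nabla F(x_0)|$.  For any small $\eta > 0$, a volume comparison in $B(x_0,(1+\eta)r)$ shows that if the density of $A$ at $x_0$ exceeds $1 - \eta^n/(1+\eta)^n$ at scale $(1+\eta)r$, then the ball $B(x_0 + rv, \eta r)$ must meet $A$; since $x_0$ is a density point, this holds for all sufficiently small $r > 0$, yielding a point $y_r \in A$ with $|y_r - (x_0 + rv)| < \eta r$ and $|y_r - x_0| \leq (1+\eta) r$.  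Using differentiability of $F$ at $x_0$ and the identity $|\nabla F(x_0)| = \nabla F(x_0) \cdot v$,
$$
|f(y_r) - f(x_0)| \;=\; |F(y_r) - F(x_0)| \;\geq\; r\,|\nabla F(x_0)| - \eta r\,|\nabla F(x_0)| - o(r),
$$
so that
$$
L\bigl(f;\,x_0,\,(1+\eta) r\bigr) \;\geq\; \frac{(1-\eta)\,|\nabla F(x_0)| - o(1)}{1+\eta}.
$$
Passing to $\liminf$ as $r \to 0$ through the scales $(1+\eta)r$ gives $\lip[f](x_0) \geq \frac{1-\eta}{1+\eta} |\nabla F(x_0)|$, and then $\eta \to 0$ gives $\lip[f](x_0) \geq |\nabla F(x_0)|$.

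Combining the two estimates yields $\Lip[f](x_0) \leq \lip[f](x_0)$ at $\H^n$-a.e.\ $x_0 \in A$, which is exactly \eqref{eq_Liplip} with $M=1$.  The only real obstacle is the density/volume argument producing $y_r \in A$ close to $x_0 + rv$; all other steps are standard applications of Rademacher's theorem and McShane extension.
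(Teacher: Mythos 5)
Your proof is correct and follows essentially the same route as the paper: McShane extension to $F\in\Lip(\R^n)$, Rademacher's theorem for $|\nabla F|$, $\Lip[f]\le|\nabla F|$ by restriction, and a Lebesgue-density argument producing points of $A$ near $x_0+r\,\nabla F(x_0)/|\nabla F(x_0)|$ to force $\lip[f]\ge|\nabla F|$. The only cosmetic differences are that you make the ball-volume comparison explicit (the paper just invokes the Lebesgue differentiation theorem to produce the $\mathbf{w}_j$'s) and you bound $L(f;x_0,s)$ from below at all small scales rather than first selecting a sequence of scales that nearly attains the $\liminf$ defining $\lip[f](x_0)$.
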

 
\begin{proof}
Let $f \in \Lip(A)$ be arbitrary.  If $F \in \Lip(\R^n)$ satisfies $F|_A = f|_A$, then
$$
\nabla f(x) \;:=\; \nabla F(x)
$$
is well-defined. 
Recalling that partial differential operators on $\R^n$ are derivations with respect to $\H^n$ \cite[Thm 37]{Weaver}, the locality property (Lemma \ref{lemma_locality}) implies that it is also independent of $F$, the choice of extension.  It is clear that
$$
\Lip[f|_A] \;\leq\;
\Lip[F|_A] \;\leq\;
|\nabla F| \;=\;
|\nabla f|
$$
holds a.e.\ on $A$.  Now fix $\e > 0$ and a Lebesgue point $x \in A$, and choose scales $(r_j)_{j=1}^\infty$ in $\R^+$ with $r_i \searrow 0$ so that
$$
\lim_{j \to \infty} 
L(f|_A;x,r_j) 
\;\leq\;
\lip[f|_A](x) \;+\; \e.
$$
Let ${\bf w}$ be a unit vector parallel to $\nabla F(x)$.  By the Lebesgue differentiation theorem, there exist $({\bf w}_j)_{j=1}^\infty \subset \bar{B}(0,1)$ so that $|{\bf w} - {\bf w}_j| \leq \e$ and $r_j{\bf w}_j \in A$.  Since $\nabla F(x)$ attains the maximal directional derivative of $F$ at $x$, we conclude that
\begin{eqnarray*}
|\nabla F(x)| &=&
\liminf_{j \to \infty} \frac{|F(x + r_j {\bf w}) - F(x)|}{r_j} \\ &\leq&
\liminf_{j \to \infty} \frac{|f(x + r_j {\bf w}_j) - f(x)|}{r_j} \,+\, \e \\ &\leq&
\liminf_{j \to \infty} \sup_{\bar{B}(x,r_j) \cap A} \frac{|f-f(x)|}{r_j} \,+\, \e  \;\leq\;
\lip[f|_A](x) + 2\e.
\end{eqnarray*}
The result follows from combining the above estimates and letting $\e \to 0$.
\end{proof}

It turns out that, up to measurable partitions, Lemma \ref{lemma_euclLiplip} also holds for general Radon measures $\nu$ on $\R^n$ that induce measurable differentiable structures.  Its proof uses Lemma \ref{lemma_liplinear} to reduce the class of admissible functions, so verifying the Lip-lip condition becomes a geometric problem.  More precisely, it suffices to study ``directions'' of differentiability at almost every point, and which of them attain the limits for $\lip[f](a)$ and $\Lip[f](a)$.

\begin{prop} \label{prop_euclconverse}
Let $\mu$ be a Radon measure on $\R^N$, let $S \subseteq \R^N$ be the support of $\mu$, and let $A \subseteq S$.  If $(A,|\cdot|,\mu)$ is a chart of differentiability for $S$, then there exist subsets $\{\mathcal{A}_n\}_{n=1}^\infty$ of $A$ so that $\mu(A \setminus \bigcup_n \mathcal{A}_n) = 0$ and for each $n \in \N$, we have
$$
\Lip[f](a) \;\leq\; n \, \lip[f](a)
$$
for all $f \in \Lip(\R^N)$ and for $\mu$-a.e.\ $a \in \mathcal{A}_n$.
\end{prop}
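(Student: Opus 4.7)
The plan is to combine Lemma~\ref{lemma_liplinear}, which reduces the Lip-lip check to affine functions of the chart coordinates, with the tangent-measure / tangent-derivation machinery of Section~\ref{sect_derivs}, which preserves the rank of derivations under blow-ups. The required partition then arises by thresholding a single pointwise ratio.

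Let $\xi : A \to \R^K$ denote chart coordinates on $A$. By Lemma~\ref{lemma_liplinear} (and its Moreover clause), it suffices to produce a Borel partition $\{\mathcal{A}_n\}$ of $A$ (up to a $\mu$-null set) on which the Lip-lip inequality holds, with some constant $C_n$, for the sub-class $\{\ell \circ \xi : \ell \text{ affine}\}$; that lemma then upgrades the inequality to the constant $C_n\sqrt{K}$ for all $f \in \Lip(\R^N)$, and the factor $\sqrt{K}$ may be absorbed into the index $n$ by relabeling. I therefore introduce the ratio
$$
\rho(a) \;:=\; \sup_{v \in S^{K-1}} \frac{\Lip[v \cdot \xi|_A](a)}{\lip[v \cdot \xi|_A](a)}
$$
(with the convention $0/0 := 1$), which is Borel measurable since $v \mapsto \Lip[v \cdot \xi|_A](a)$ is continuous and the supremum therefore reduces to one over a countable dense subset of $S^{K-1}$. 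The main step is to show $\rho(a) < \infty$ for $\mu$-a.e.\ $a$, after which $\mathcal{A}_n := \{a \in A : \rho(a) \leq n\}$ completes the proof.

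To prove $\rho < \infty$ $\mu$-a.e., I fix a $\mu$-density point $a \in A$ and argue by contradiction. Assume unit vectors $v_j$ and scales $r_j \searrow 0$ with
$$
\sup_{y \in A \cap B(a, r_j)} \frac{|v_j \cdot (\xi(y) - \xi(a))|}{r_j} \;\longrightarrow\; 0
$$
while $\Lip[v_j \cdot \xi|_A](a)$ stays bounded below. Passing to a subsequence, $v_j \to v \in S^{K-1}$, and by weak-star compactness of Radon measures (as in Theorem~\ref{thm_tanderiv}) extract a tangent measure $\nu \in \Tan(\mu|_A,a)$ arising as a limit of $c_j(T_{a,r_j})_\#\mu$. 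By Theorem~\ref{thm_MDSchar} the chart $A$ carries $K$ linearly independent derivations; Theorem~\ref{thm_tanderiv} together with Corollary~\ref{cor_tanderivrank} then furnishes $K$ linearly independent tangent derivations in $\U(\R^N,\nu)$. On the other hand, the vanishing-sup hypothesis forces the affine function $v \cdot z$ on the blow-up to have vanishing $\lip$ at the origin with respect to $\nu$. Combining the chain rule (Lemma~\ref{lemma_chainrule}) with the Jacobian criterion (Lemma~\ref{lemma_jacobian}) applied to these $K$ tangent derivations, the only way this can happen is through a linear dependence among them, yielding the contradiction.

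The principal obstacle is the last bridge: converting the metric-geometric statement ``$\lip$ vanishes in direction $v$ on the support of $\nu$'' into an algebraic statement of linear dependence of tangent derivations. This requires the operator bound \eqref{eq_derivop} applied on the tangent side to quantify how much ${\rm supp}(\nu)$ must spread in every direction the derivations detect, together with Lemma~\ref{lemma_pushfwd} to identify the chart's differentials with their pushforwards under $\xi$. Once $\rho < \infty$ $\mu$-a.e.\ is secured, the Borel sets $\mathcal{A}_n$ form the desired partition.
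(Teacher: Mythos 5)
Your proposal follows the paper's route closely: reduce to affine functions of the chart coordinates via Lemma~\ref{lemma_liplinear}, blow up at a density point to obtain a tangent measure, preserve the derivation-module rank via Theorem~\ref{thm_tanderiv} and Corollary~\ref{cor_tanderivrank}, and read off a contradiction from a degeneracy of the tangent measure. Thresholding by the single pointwise ratio $\rho(a)$ is a tidier re-packaging of the paper's nested sets $A_n$, and the ``bounded below'' requirement you impose on $\Lip[v_j\cdot\xi](a)$ is in fact automatic from \eqref{eq_lipderiv}, since $v_j$ is exactly the measurable differential of $v_j\cdot\xi$ and so $\Lip[v_j\cdot\xi](a) \geq K^{-1}|v_j| = K^{-1}$.

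The genuine gap is at what you yourself flag as the ``last bridge,'' and the proposed tools there do not close it. The vanishing-sup hypothesis forces strictly more than ``vanishing $\lip$ of $v\cdot z$ at the origin with respect to $\nu$.'' A $\lip$ condition at a single point is a local statement, while the rank of $\U(\R^N,\nu)$ is a module-theoretic, $\nu$-a.e.\ notion; a single-point assertion cannot by itself force a rank drop. Nor does \eqref{eq_derivop} combined with the chain rule and Jacobian criterion convert pointwise $\lip$-vanishing into linear dependence: that would need a local-to-global inequality of the form \eqref{eq_lipderiv} \emph{for the tangent measure} $\nu$, and precisely that is not available a priori. What the slab condition actually yields --- and what must be proved explicitly --- is the global statement that $\nu$ is supported in the hyperplane $\Pi_a := a + {\bf w}(a)^\perp$, which is the paper's Claim~\ref{claim_hyperplane}. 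The argument is that each blow-up $T_{a,r_j}$ sends the slab of relative thickness $\e_j \to 0$ about $\{v_j\cdot z = 0\}$ to a slab of vanishing absolute width, so every continuous test function supported at positive distance from $\Pi_a$ eventually integrates to zero against $\nu_j$ and hence against $\nu$. Once hyperplane support is in hand, the rank drop is immediate from locality (Lemma~\ref{lemma_locality}): $\U(\R^N,\nu)$ is then isomorphic to a derivation module over an isometric copy of $\R^{N-1}$, hence has rank at most $N-1$, contradicting Corollary~\ref{cor_tanderivrank}. The Jacobian criterion, which you propose instead, cannot supply this from a pointwise $\lip$ statement alone.
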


The proof splits into three parts.  At each point where Proposition \ref{prop_euclconverse} fails, ({\bf i}) the measure concentrates on slabs of arbitrarily small (relative) thickness.  As a result, ({\bf ii}) there must exist a tangent measure $\nu$ at that point that is supported on a hyperplane, so the rank of $\U(\R^N,\nu)$ must be at most $n-1$.  This leads to ({\bf iii}) a contradiction, since $\U(\R^N,\nu)$ must have rank $n$ by Corollary \ref{cor_tanderivrank}.

\begin{proof}
{\em Step} ({\bf 0}):\ {\em Setup}.\ 
From the chart condition on $A$ and Remarks \ref{rmk_lipderiv1side} and \ref{rmk_lipderiv}, there exists $K \geq 1$ so that, for all $f \in \Lip(\R^N)$ and for $\mu$-a.e.\ $a \in A$, we have
$$
\Lip[f](a) \;\leq\; K \, |{\bf D}f(a)|.
$$
Moreover, each component of $f \mapsto {\bf D}f$ is a derivation, so $\U(\R^N,\mu)$ has rank $N$.

Now suppose that there exists $h_1 \in \Lip(\R^N)$ so that 
$$
\Lip[h_1] \;>\; \lip[h_1]
$$
holds on a subset $A_1 \subseteq A$ with positive $\mu$-measure.  There are two cases:
\begin{itemize}
\item if $A_1$ satisfies a Lip-lip condition with $n = 2$, then the proof is complete;
\vspace{.025in}
\item otherwise, there exists $h_2 \in \Lip(\R^N)$ so that 
$$
\Lip[h_2] \;>\; 2 \lip[h_2]
$$
holds on a subset $A_2 \subseteq A_1$ with positive $\mu$-measure.
\end{itemize}
Iterating with $n = 1, 2, 3\ldots$ etc, either the Proposition holds true at some finite step, or there exist nested subsets $\{A_n\}_{n=1}^\infty$ of $A$ and $h_n \in \Lip(\R^N)$ so that
\begin{equation} \label{eq_contrary}
\Lip[h_n] \;>\; n \, \lip[h_n]
\end{equation}
holds $\mu$-a.e.\ on $A_n$, for all $n \in \N$.  By replacing $h_n$ with $L(h_n)^{-1}h_n$ as necessary, we further assume that $L(h_n) \leq 1$.  Now define the intersection
$$
A_\infty \;:=\; \bigcap_{n=1}^\infty A_n,
$$
so \eqref{eq_contrary} also holds $\mu$-a.e.\ on $A_\infty$, for each $n \in \N$.  

Now let $a \in A_\infty$ be a point of differentiability of $h_n$ for every $n \in \N$.  By the chart condition (\ref{defn_MDS}.B), this property applies to $\mu$-almost every point of $A_\infty$.

Let $\e \in \big(0, \frac{\Lip[h_n](a)}{n}\big)$ be given.

\vspace{.05in}
{\em Step} ({\bf i}):\ {\em Thin slabs}.\
For $n \in \N$, assume that $\Lip[h_n](a) > 0$. By Lemma \ref{lemma_liplinear} with $Y = A_n$ and $\xi = \id_{\R^N}$, inequality \eqref{eq_contrary} also holds for the function
$$
b \in \R^N \,\mapsto\, {\bf D}h_n(a) \cdot b \in \R
$$
at $a \in A_n$, so choose $(r_j)_{j=1}^\infty \subset \R^+$ with $r_j \searrow 0$ so that
\begin{eqnarray*}
\lim_{j \to \infty} \sup_{B(a,r_j)} \left| {\bf D}h_n(a) \cdot \frac{b-a}{r_j} \right|
&\leq&
\lip[h_n](a) \,+\, \e \\ &\leq&
\frac{1}{n}\Lip[h_n](a) \,+\, \e \;\leq\;
\frac{2}{n}\Lip[h_n](a).
\end{eqnarray*}
So for sufficiently large $j_n \in \N$, Remark (\ref{rmk_lipderiv1side}.B) implies the slab condition
\begin{eqnarray}
\left| {\bf D}h_n(a) \cdot (b-a) \right| &\leq& \notag
\sup_{B(a,r_j)} \left| {\bf D}h_n(a) \cdot \frac{b-a}{r_j} \right| r_j \;=\;
\frac{2( \Lip[h_n](a) + \e ) r_j}{n} \\ &\leq& \notag
\frac{3r_j}{n} \Lip[h_n](a) \;\leq\;
\frac{3K}{n} |{\bf D}h_n(a)| r_j \\
\label{eq_slab} \text{so }
\left| \frac{{\bf D}h_n(a)}{|{\bf D}h_n(a)|} \cdot (b-a) \right| &\leq&
\frac{3K}{M} r_j
\end{eqnarray}
holds for all $b \in B(a,r_j) \cap A_n$, whenever $j \geq j_n$; see Figure 1.

\begin{figure}[h]
\begin{pspicture}(-5,-1.25)(5,2)
\pscircle*[linecolor=gray](0,0){1.5}
\psarc*[linecolor=lightgray](0,0){1.5}{15}{165}
\psarc*[linecolor=lightgray](0,0){1.5}{195}{345}
\psline[linewidth=1pt]{->}(0,0)(0,1.8)\put(.1,1.8){$\frac{{\bf D}h_n(a)}{|{\bf D}h_n(a)|}$}
\pscircle*(0,0){.05}\put(-.3,-.1){$a$}
\psline[linewidth=.5pt]{<->}(3,-1.5)(3,0)\put(3.1,-.75){$r_j$}
\psline[linestyle=dashed](1.5,0)(2.9,0)
\psline[linestyle=dashed](0,-1.5)(2.9,-1.5)
\psline[linewidth=.5pt]{<->}(-3,-.375)(-3,0)\put(-3.9,-.3){$\frac{3K}{n}r_j$}
\psline[linestyle=dashed](-1.5,-.375)(-2.9,-.375)
\psline[linestyle=dashed](-1.5,0)(-2.9,0)
\end{pspicture}
\caption{The slab condition \eqref{eq_slab} for $B(a,r_j)$ in direction ${\bf D}h_n(a)$.}
\end{figure}
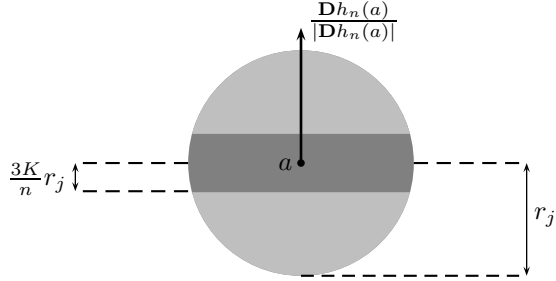

\vspace{.05in}
{\em Step} ({\bf ii}):\ {\em Tangent measures}.\
For the same point $a \in A_\infty$, the sequence of unit vectors
$\{ \frac{{\bf D}h_n(a)}{|{\bf D}h_n(a)|} ; n \in \N \}$
has a convergent subsequence which, with abuse of notation, we denote with the same symbols.

Call the limit ${\bf w}(a)$, and put $\rho_1 := r_{j_1}$ and by induction, for $M \in \N$ put 
$$
\rho_{n+1} \;:=\; \min\{\rho_n,r_{j_{n+1}}\} \, \text{ and } \,
c_n \;:=\; \frac{1}{\mu(B(a,\rho_n)}.
$$
For $\Omega := B(0,1)$, the sequence $\nu_n := c_n (T_{a,\rho_n})_\#\mu$ is norm-bounded, since
$$
\nu_n(B(0,1)) \;=\;
\frac{\mu\big(T_{a,\rho_n}^{-1}(B(0,1))\big)}{\mu(B(a,\rho_n))} \;=\;
\frac{\mu(B(a,\rho_n))}{\mu(B(a,\rho_n))} \;=\; 1,
$$
so up to a further subsequence, there is a Radon measure $\nu$ on $\Omega$ so that $\nu_n \wsto \nu$, hence $\nu \in \Tan(\mu,a)$.  Corollary \ref{cor_tanderivrank} and Lemma \ref{lemma_locality} therefore imply that $\U(\Omega,\nu)$ has rank $N$.

\begin{claim} \label{claim_hyperplane}
The measure $\nu|_\Omega$ is supported in the hyperplane 
$\Pi_a := a + {\bf w}(a)^\perp.$
\end{claim}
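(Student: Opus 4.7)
The plan is to use the slab condition \eqref{eq_slab} at each scale $\rho_n$ to force $\nu_n$ to concentrate in a slab orthogonal to ${\bf w}_n := {\bf D}h_n(a)/|{\bf D}h_n(a)|$ whose thickness vanishes, and then pass to the weak-star limit using ${\bf w}_n \to {\bf w}(a)$. The main preparatory step is a careful choice of $\rho_n$: one wants (a) each $\rho_n$ to lie in the specific subsequence $(r_j^{(n)})_j$ from Step~(i) applied to $h_n$, so that \eqref{eq_slab} actually applies at scale $\rho_n$ and produces a slab of thickness $\frac{3K}{n}\rho_n$; (b) $(\rho_n)$ strictly decreasing to $0$; and (c) by applying Lebesgue differentiation to each $\chi_{A_n}$ (so that $\mu$-a.e.\ $a$ is a density point of every $A_n$), one can also arrange that $\mu(B(a,\rho_n) \setminus A_n) < \mu(B(a,\rho_n))/n$. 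All three conditions can be met simultaneously by a diagonal-type selection, taking each $j_n$ large enough along $r_j^{(n)} \searrow 0$.

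Pushing \eqref{eq_slab} forward under $T_{a,\rho_n}$, the image of $B(a,\rho_n) \cap A_n$ lies in the slab $\{y \in B(0,1) : |{\bf w}_n \cdot y| \leq \frac{3K}{n}\}$, which gives
$$\nu_n\Big(\{y \in B(0,1) : |{\bf w}_n \cdot y| > \tfrac{3K}{n}\}\Big) \;\leq\; c_n\,\mu\big(B(a,\rho_n) \setminus A_n\big) \;<\; \tfrac{1}{n}.$$
For each fixed $\delta > 0$, the open set $U_\delta := \{y \in B(0,1) : |{\bf w}(a) \cdot y| > \delta\}$ is contained in $\{y : |{\bf w}_n \cdot y| > 3K/n\}$ for all sufficiently large $n$, since ${\bf w}_n \to {\bf w}(a)$ and $3K/n \to 0$. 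Weak-star lower-semicontinuity of Radon measures on open sets then yields
$$\nu(U_\delta) \;\leq\; \liminf_{n\to\infty} \nu_n(U_\delta) \;=\; 0.$$
Exhausting $\delta \searrow 0$ through a countable sequence gives $\nu(\{y \in B(0,1) : {\bf w}(a) \cdot y \neq 0\}) = 0$, so ${\rm spt}(\nu|_\Omega) \subseteq {\bf w}(a)^\perp$, which is $\Pi_a$ in the blowup coordinates.

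The main obstacle will be the preparatory step: conditions (a)--(c) impose three different constraints on a single sequence of radii $\rho_n$ --- a specific $h_n$-dependent subsequence dictated by the slab condition, monotonicity, and a $\mu$-density estimate --- and one has to verify that these can be satisfied together. Once this is set up, the slab estimate for $\nu_n$ and the weak-star limit passage are routine. A secondary subtlety is that the whole argument is carried out at a $\mu$-generic point of $A_\infty$ that is simultaneously a differentiability point of every $h_n$ and a density point of every $A_n$; existence of such $a$ follows from countably many almost-everywhere assertions.
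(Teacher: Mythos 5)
Your proof is correct and takes the same overall approach as the paper: push the slab condition forward under $T_{a,\rho_n}$, show that the rescaled measures $\nu_n$ concentrate near the limiting hyperplane, and pass to the weak-star limit. The one place you improve on the paper's exposition is the density bookkeeping: the paper asserts $\mu(B(a,\rho_n)\setminus\sigma_n)=0$ outright, which is stronger than what \eqref{eq_slab} actually supplies (the slab bound constrains only $b\in B(a,\rho_n)\cap A_n$, not the whole ball), and your explicit estimate $\mu(B(a,\rho_n)\setminus A_n)<\mu(B(a,\rho_n))/n$ --- available because $\mu$-a.e.\ $a\in A_\infty$ is a Lebesgue density point of every $A_n$, which needs only pointwise doubling via Lemma \ref{lemma_MDSptdoubling} --- is the correct way to close that gap, and the diagonal selection reconciling your conditions (a)--(c) works exactly as you describe.
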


From this and Lemma \ref{lemma_locality} it would follow that $\U(\R^N,\nu|_\Omega)$ and $\U(\Pi_a,\nu|_\Omega)$ are isomorphic as modules.  Since $\Pi_a$ is isometric to $\R^{N-1}$, we would obtain a contradiction, since $\U(\R^N,\nu|_\Omega)$ would have rank at most $N-1$ and hence $\mu(A_\infty) = 0$.

\vspace{.05in}
{\em Step} ({\bf iii}):\ {\em Putting it together}.\
To prove Claim \ref{claim_hyperplane}, note that Condition \eqref{eq_slab} and the convergence $\frac{{\bf D}h_n(a)}{|{\bf D}h_n(a)|} \to {\bf w}(a)$ imply that, for sufficiently large $n \in \N$,
\begin{eqnarray*}
\left| {\bf w}(a) \cdot (b-a) \right| &\leq&
\left| \left( \frac{{\bf D}h_n(a)}{|{\bf D}h_n(a)|} - {\bf w}(a) \right) \cdot (b-a) \right| \,+\,
\left| \frac{{\bf D}h_n(a)}{|{\bf D}h_n(a)|} \cdot (b-a) \right| \\ &\leq&
\e |b-a| \,+\, \frac{3K}{M}\rho_n \;\leq\;
\frac{3K+1}{M} \rho_n
\end{eqnarray*}
holds for all $b \in B(a,\rho_n) \cap A_\infty$, where we used the explicit bound $\e < \frac{\Lip[h_n](a)}{n}$ from before (and where $L(h_n) \leq 1$).  In particular, the slabs
\begin{equation} \label{eq_slabmass}
\sigma_n \;:=\; \left\{
b \in \R^N \,;\,
|{\bf w}(a) \cdot (b-a)| \,\leq\, \frac{3K+1}{n} \rho_n
\right\}
\end{equation}
satisfy $\mu(B(a,\rho_n) \setminus \sigma_n) = 0$, for all $n \in \N$.

So for any ball $\bar{B}(b,R)$ in $\Omega \setminus {\bf w}(a)^\perp$, let $\varphi \in C_c(\Omega)$ be supported in $ \Omega \setminus {\bf w}(a)^\perp$ and satisfy $\varphi|_{B(b,R)} = 1$.  Since $\Omega \setminus {\bf w}(a)^\perp$ is open, there exists $n \in \N$ so that 
\begin{eqnarray*}
{\rm dist}\left( {\rm spt}(\varphi),\, {\bf w}(a)^\perp \right) &\leq&
\frac{3K+1}{n}, \\
\text{so } \, {\rm dist}\left( {\rm spt}(\varphi \circ T_{a,\rho_n}),\, \Pi_a \right) &\leq&
\frac{3K+1}{n}\rho_n
\end{eqnarray*}
and hence ${\rm spt}(\varphi \circ T_{a,\rho_n}) \cap \sigma_n = \emptyset$.  Following Definition \ref{defn_tanmeas}, we compute
\begin{eqnarray*}
\nu(B(b,R)) \;\leq\;
\int_\Omega \varphi \,d\nu &=&
\lim_{n \to \infty} c_n \int_{B(a,\rho_n)} (\varphi \circ T_{a,\rho_n}) \,d\mu \\ &\leq&
\lim_{n \to \infty} \frac{1}{\mu(B(a,\rho_n))} \int_{\sigma_n} (\varphi \circ T_{a,\rho_n}) \,d\mu \;=\; 0.
\end{eqnarray*}
Since $B(b,R)$ was arbitrary, Claim \ref{claim_hyperplane} follows.

As for the subsets in the Proposition, take $\mathcal{A}_n := A_n \setminus A_{n+1}$.
\end{proof}

\begin{rmk} \label{rmk_diffnotchart}
At this stage, a few observations about the proof are in order.
\begin{enumerate}
\item[(\ref{rmk_diffnotchart}.A)]
Proposition \ref{prop_euclconverse} is not a {\em quantitative} result, in that the constants $M \in \N$ for the Lip-lip conditions on $\mathcal{A}_M$ cannot be computed explicitly from $A$.

If we knew as in Lemma \ref{lemma_euclLiplip} that $d\mu \ll d\H_n$, then upon reaching some {\em finite} constant $M \in (3K,\infty)$, the slab condition \eqref{eq_slab} would already contradict the Lebesgue differentiation theorem.  Put otherwise, the lack of quantitativity in Proposition \ref{prop_euclconverse} is due to the lack of explicit information about measures on $\R^N$ that induce an MDS.

\vspace{.05in}
\item[(\ref{rmk_diffnotchart}.B)]
It is worthwhile to note that {\em the uniqueness of differentials is not needed in the proof of Proposition \ref{prop_euclconverse}}.
Instead, it suffices that $\mu$-almost every $a \in A$ is a point of differentiability, where there exists ${\bf v} \in \R^n$ satisfying Condition (\ref{defn_MDS}.A)  for $a$, as well as by applying Remarks \ref{rmk_lipderiv1side} and \ref{rmk_liplinear}.
\end{enumerate}
\end{rmk}

\section{Metric spaces:\ from differentiability to Lip-lip conditions} \label{sect_metric}

\subsection{Push, lift, {\em then} pull}

For Riemannian manifolds, tangent vectors allow pushforwards via diffeomorphisms.  A similar phenomenon holds true for measurable differentiable structures on metric spaces.

\begin{lemma} \label{lemma_MDSpushfwd}
Let $(X,d)$ be a complete metric space with a doubling measure $\mu$.  If 
$\{ (X_m,\xi^m) \}_{m=1}^\infty$
is an atlas for $(X,d,\mu)$, then each pair $(\xi^m(X_m), \id_{\R^{N_m}})$ is a chart of differentiability for $(\xi^m(X), |\cdot|, \xi^m_\#\mu)$.
\end{lemma}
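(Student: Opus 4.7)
The plan is to verify Definition \ref{defn_MDS}.B for $(\xi^m(X_m), \id_{\R^{N_m}})$ by pushing forward the differentiation data from $(X_m, \xi^m)$. Given $g \in \Lip(\xi^m(X))$, I first extend it to $\tilde g \in \Lip(\R^{N_m})$ (by McShane) and take as candidate measurable differential
$$
{\bf D}g(\zeta) \;:=\; [\xi^m_\# {\bf D}^m]\tilde g(\zeta), \qquad \zeta \in \xi^m(X_m).
$$
Each component of $\xi^m_\#{\bf D}^m$ is a derivation in $\U(\xi^m(X), \xi^m_\#\mu)$ via the pushforward construction of \S\ref{subsect_pushfwd}, so ${\bf D}g$ is a measurable $\R^{N_m}$-valued map. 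Independence of the chosen extension follows by applying Lemma \ref{lemma_pushfwd} to $\tilde g_1 - \tilde g_2$ for any two extensions: since $(\tilde g_1 - \tilde g_2)\circ\xi^m$ vanishes identically on $X$, the measurable differential ${\bf D}^m((\tilde g_1 - \tilde g_2)\circ\xi^m)$ vanishes $\mu$-a.e.\ on $X_m$, hence $[\xi^m_\#{\bf D}^m](\tilde g_1 - \tilde g_2)$ vanishes $\xi^m_\#\mu$-a.e.\ on $\xi^m(X_m)$.

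Next I invoke Lemma \ref{lemma_pushfwd} directly to identify, for $\mu$-a.e.\ $x \in X_m$,
$$
{\bf D}^m(\tilde g \circ \xi^m)(x) \;=\; {\bf D}g(\xi^m(x)).
$$
Since $(X_m, \xi^m)$ is a chart, $\tilde g \circ \xi^m$ is Lipschitz-differentiable at $x$ with respect to $\xi^m$, with differential ${\bf v} := {\bf D}g(\zeta)$ where $\zeta := \xi^m(x)$. The remaining task is to upgrade this metric-space differentiability at $x$ to Euclidean differentiability of $g$ at $\zeta$ with respect to $\id_{\R^{N_m}}$.

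This upgrade is the hard part. The obstacle is that a sequence $\eta_n \to \zeta$ in $\xi^m(X)$ need not lift to any sequence $y_n \to x$ in $X$ with $\xi^m(y_n) = \eta_n$; consequently, the bound $|\tilde g(\eta_n) - \tilde g(\zeta) - {\bf v}\cdot(\eta_n - \zeta)| = o(d(y_n, x))$ coming from metric differentiability at $x$ does not by itself yield the desired $o(|\eta_n - \zeta|)$. My plan is to establish a fiber-selection statement at $\mu$-a.e.\ $x \in X_m$: for every $\eta \in \xi^m(X)$ sufficiently close to $\zeta$, there exists some preimage $y \in X$ with $d(y,x) \leq C(x)|\eta - \zeta|$. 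Such a selection encodes the effective non-degeneracy of coordinate maps on measurable differentiable structures --- a soft analogue of the bi-Lipschitz behavior packaged into the distance coordinates of Schioppa (\ref{rmk_doubling}.C) --- and once it is in hand the metric estimate at $x$ immediately yields the Euclidean one at $\zeta$.

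For uniqueness I use the asymmetry in the opposite direction. If some ${\bf v}'$ also realizes Euclidean differentiability of $g$ at $\zeta$, then for any $y \to x$ the elementary Lipschitz bound $|\xi^m(y) - \xi^m(x)| \leq L(\xi^m)\,d(y,x)$ shows that $\tilde g \circ \xi^m$ is also differentiable at $x$ with respect to $\xi^m$ with differential ${\bf v}'$. The uniqueness of measurable differentials in the chart $(X_m, \xi^m)$ then forces ${\bf v}' = {\bf D}^m(\tilde g \circ \xi^m)(x) = {\bf D}g(\zeta)$, completing the verification.
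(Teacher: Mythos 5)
Your high-level plan — push the differential forward, identify $[\xi^m_\#{\bf D}^m]$ with the candidate measurable differential, verify the differentiability and uniqueness conditions in Definition \ref{defn_MDS} directly — is the same broad route as the paper's proof, and your identification of the \emph{obstacle} is also the same one the paper has to face: differences $|\eta-\zeta|$ in the target need not control distances $d(y,x)$ in the source, so the chart estimate at $x$ does not by itself yield $o(|\eta-\zeta|)$. Where you diverge is in how the obstacle is supposed to be overcome. You sketch (but do not prove) a pointwise ``fiber-selection'' bound $d(y,x)\leq C(x)\,|\eta-\zeta|$, and I would caution that this is a genuinely stronger co-Lipschitz-type statement than what the paper actually needs or establishes; it is not what drops out of the chart hypothesis, and it is precisely to avoid proving such a bound for $\xi^m$ that the paper later introduces the \emph{lifted} coordinates $\zeta$ in Lemma \ref{lemma_pullback}, where the extra component $d(x,\cdot)$ makes the co-Lipschitz estimate trivial.

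The paper's own Step (I) argues differently: choosing $b_j\to a$ along nearly extremal scales $r_j$, taking any preimages $y_j\in\xi^{-1}(\{b_j\})$, and using that the $\xi^m$ are distance functions (Remark \ref{rmk_doubling}.C) together with completeness and doubling to extract a convergent subsequence $y_{j_k}\to x$ with $\xi(x)=a$; the needed inequality $\Lip[g](a)\leq\Lip[g\circ\xi](x)$ is then read off from the difference quotients along this subsequence. Crucially this is a \emph{compactness} argument, not a quantitative selection with a modulus $C(x)$, and the subsequent mollifier approximation in Step (II), combined with Lemma \ref{lemma_pushfwd} and Mazur's lemma, replaces the role played in your sketch by the McShane extension plus the unproved selection bound. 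So: your setup, your reduction to Lemma \ref{lemma_pushfwd}, and your uniqueness argument are all sound and match the paper; but the heart of the proof — how to transfer the metric-space differentiability at $x$ to Euclidean differentiability at $\xi(x)$ — remains undone in your proposal, and the specific fiber-selection statement you offer as a plan is both unproved and stronger than required, whereas the paper replaces it with the compactness-plus-distance-coordinates mechanism and with the lifting device of Lemma \ref{lemma_pullback} when quantitative control is truly needed.
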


As before, the proof proceeds in several steps.  First, ({\bf I}) we show Condition (\ref{thm_MDSchar}.B) holds for pushforward differentials, and then ({\bf II}) we verify Definition \ref{defn_MDS} directly.  To clarify, $\xi^m_\#\mu$ may be non-doubling, so Theorem \ref{thm_MDSchar} does not necessarily apply; we instead use smooth approximation of Lipschitz functions.

\begin{proof}
We work with one chart $X_m$ at a time.  To simplify notation, we therefore suppress the index $m$ and write $\xi = \xi^m$, $N = N_m$, and so on.

\vspace{.05in}

{\em Step} ({\bf I}):\ {\em verifying Condition (\ref{thm_MDSchar}.B)}.\
As indicated before in \S\ref{subsect_pushfwd}, by \cite[Lem 2.17]{Gong_rigidity} each component of $g \mapsto [\xi_\#{\bf D}]g$ is an element in $\U(\R^N,\xi_\#\mu)$.  So from the density of polynomials in $\Lip(\R^N)$ and Lemma \ref{lemma_pushfwd}, we conclude that the components of $\xi_\#{\bf D}$ form a basis of $\U(\R^N,\xi_\#\mu)$.

It remains to check the local-to-global inequality \eqref{eq_lipderiv}, so let $a \in \xi(X)$ and $\e > 0$ be given.  Choose $(r_j)_{j=1}^\infty \subset \R^+$ and $b_j \in B(a,r_j) \cap \xi(X)$, for each $j \in \N$, so that
$$
\Lip[g](a) -\e \;\leq\; 
\lim_{j \to \infty} L(g;\,a,r_j) \;=\;
\lim_{j \to \infty} \frac{|g(b_j) - g(a)|}{r_j}.
$$
Without loss, take preimages of $(b_j)$ that converge in $X$ to a preimage of $a$; to see this, letting $y_j  \in \xi^{-1}(\{b_j\})$ be arbitrary, the choice of coordinates \eqref{eq_distcoords} implies that $(y_j)_{j=1}^\infty$ is a bounded set, so by compactness there exists a convergent subsequence $(y_{j_k})_{k=1}^\infty$ with limit $x \in X$.  Continuity of distance functions then implies that
$$
\xi(x) \;=\;
\lim_{k \to \infty} \xi(y_{j_k}) \;=\;
\lim_{k \to \infty} b_{j_k} \;=\; a.
$$
Put $R_j := \min(r_j, |y_j-x|)$.  We now proceed to estimate
$$
\frac{|g(b_j) - g(a)|}{r_j} \;\leq\;
\frac{|(g \circ \xi)(y_j) - (g \circ \xi)(x)|}{R_j} \;\leq\;
L(g \circ \xi;\, x, R_j)
$$
which, combined with the previous estimates, further implies
$$
\Lip[g](a) - \e \;\leq\;
\lim_{j \to \infty} \frac{|g(b_j) - g(a)|}{r_j} \;\leq\;
\limsup_{j \to \infty} L(g \circ \xi;\, x,R_j) \;\leq\;
\Lip[g \circ \xi](x).
$$
The opposite inequality also holds; indeed, since $y \to x$ in $X$ implies $b \to a$, it follows that each $g \in \Lip(\R^N)$ satisfies
\begin{eqnarray*}
\limsup_{y \to x} \frac{|g(\xi(y)) - g(\xi(x))|}{d(x,y)} &=&
\limsup_{y \to x} \frac{|g(\xi(y)) - g(\xi(x))|}{d(x,y)} \, \frac{|\xi(y)-\xi(x)|}{|\xi(y)-\xi(x)|} \\ &\leq&
L(\xi) \, \limsup_{b \to a} \frac{|g(b) - g(a)|}{|b-a|}
\end{eqnarray*}
and if $a = b$, then the LHS is zero.  Letting $\e \to 0$, for $\mu$-a.e.\ $x \in X$ we have
\begin{equation} \label{eq_pullback}
\Lip[g](a) \;\leq\; 
\Lip[g \circ \xi](x) \;\leq\;
\sqrt{N} \Lip[g](a).
\end{equation}
This and Lemma \ref{lemma_pushfwd} imply Condition (\ref{thm_MDSchar}.B) on $\xi(X)$, with constant $\sqrt{N}K$.

\vspace{.05in}

{\em Step} ({\bf II}):\ {\em verifying Definition \ref{defn_MDS}}.\
For smooth $h \in \Lip(\R^N)$, the gradient $\nabla h$ is defined on all of $\R^N$ and satisfies Definition \ref{defn_MDS}.  Moreover, from the Chain Rule (Lemma \ref{lemma_chainrule}) and the pushforward formula (Lemma \ref{lemma_pushfwd}) it further follows that
$$
[\xi_\#{\bf D}]h(a) \;=\;
\nabla h(a) \cdot [\xi_\#{\bf D}]\id_{\R^N}(a) \;=\;
\nabla h(a) \cdot {\bf D}\xi(x) \;=\;
\nabla h(a)
$$
holds for $\xi_\#\mu$-a.e.\ $a \in \xi(X)$.

So for non-smooth $g \in \Lip(\R^N)$, let $t > 0$ and consider smooth, symmetric mollifiers $\eta_t : \R^N \to [0,\infty)$, supported on $\bar{B}(0,t)$, and put $h_t := g * \eta_t$.  Clearly $(h_t)_{t > 0}$ is uniformly Lipschitz and converges locally uniformly to $g$, so 
$$
[\xi_\#{\bf D}]h_t \wsto [\xi_\#{\bf D}]g \,\text{ in }\, L^\infty(\R^N,\xi_\#\mu)
$$
follows from Definition \ref{defn_derivation} and Lemma \ref{lemma_pushfwd}.  Fixing $p \in (1,\infty)$ and applying Mazur's lemma, the reflexivity of $L^p(\R^N,\xi_\#\mu)$, and the inclusion of spaces
$$
L^\infty_{\rm loc}(\R^N,\xi_\#\mu) \;\subset\; L^p_{\rm loc}(\R^N,\xi_\#\mu),
$$
then, up to taking convex combinations and subsequences of $(h_t)_{t > 0}$, we have
$$
[\xi_\#{\bf D}]h_t(a) \to [\xi_\#{\bf D}]g(a)
$$
pointwise for $\xi_\#\mu$-a.e.\ $a \in \xi(X)$.  Using \eqref{eq_pullback} at $\xi_\#\mu$-density points $a$, we have
\begin{equation*}
\begin{split}
& 
\Lip\left( g - [\xi_\#{\bf D}]g(a) \cdot \id_{\R^N} \right)(a) \\
& \hspace{1.in} \;\leq\;
\Lip( g - h_t )(a) \,+\,
\Lip( h_t - [\xi_\#{\bf D}]h_t(a) \cdot \id_{\R^N} )(a) \\ 
& \hspace{2.12in}
\,+\, \Lip( [\xi_\#{\bf D}](h_t - g)(a) \cdot \id_{\R^N} )(a) \\ 
& \hspace{1.in} \;\leq\;
\sqrt{N}\,|[\xi_\#{\bf D}](g-h_t)(a)| \,+\, 0 \,+\, N\,|[\xi_\#{\bf D}](g-h_t)(a)|
\end{split}
\end{equation*}
so the RHS vanishes as $t \to 0$.  The lemma follows.
\end{proof}

Proceeding with the analogy of Riemannian manifolds, recall that differential forms have natural pullbacks under smooth mappings.
With this in mind and the identity $\Lip[f] = |\nabla f|$ on $\R^n$, it is worth inquiring whether the Lip-lip condition is also preserved under pullback, in some reasonable sense.

For chart coordinates $\xi^m: X \to \R^{N_m}$ it is easy to show, from first principles, that pointwise Lipschitz constants in the target space $\R^{N_m}$ majorise those in $X$.  The converse is less clear.  To overcome this, we ``lift'' the coordinates to higher dimensions, so that the new geometry becomes more compatible with that of the source.  In particular, {\em quotients} of pointwise Lipschitz constants on the new target will be comparable to those on $X$.

\begin{lemma} \label{lemma_pullback}
Let $(X,d)$ be a complete metric space, let $\mu$ be a Radon measure on $X$, fix a finite set $\{x_i\}_{i=1}^N$ in $X$, and put
$$
\xi(y) \;:=\; ( \xi_i(y) )_{i=1}^N, \text{ where } \xi_i(y) \;:=\; d(x_i,y).
$$
Then for all $h \in \Lip(\R^{N+1})$ and $\mu$-a.e.\ $x \in X$, the inequality
$$
\frac{ \Lip[h \circ \zeta](x) }{ \lip[h \circ \zeta](x) } \;\leq\;
\sqrt{N+1} \, \frac{ \Lip[h](\zeta(x)) }{ \lip[h](\zeta(x)) } 
$$
holds, where $\zeta(y) := (\xi(y),d(x,y))$.
\end{lemma}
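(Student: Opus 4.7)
The plan is to use the fact that the extra coordinate $d(x,\cdot)$ appended to $\zeta$ produces a two-sided comparison at the base point $x$:
$$
d(x,y) \;\leq\; |\zeta(y) - \zeta(x)| \;\leq\; \sqrt{N+1}\, d(x,y)
$$
for every $y \in X$.  The upper estimate is the standard coordinatewise bound, since the $N$ components $\xi_i = d(x_i,\cdot)$ and the appended coordinate $d(x,\cdot)$ are each 1-Lipschitz, so $|\zeta(y) - \zeta(x)|^2 \leq (N+1) d(x,y)^2$.  The lower estimate comes from the appended coordinate alone:\ $|\zeta(y) - \zeta(x)| \geq |d(x,y) - 0| = d(x,y)$.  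This ``lift'' of $\xi$ by one coordinate is the whole point of the construction; without it, only the upper estimate would be available.

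From the upper bound I would first derive $\Lip[h \circ \zeta](x) \leq \sqrt{N+1}\, \Lip[h](\zeta(x))$ by splitting the difference quotient as
$$
\frac{|h(\zeta(y)) - h(\zeta(x))|}{d(x,y)} \;=\; \frac{|h(\zeta(y)) - h(\zeta(x))|}{|\zeta(y) - \zeta(x)|} \cdot \frac{|\zeta(y) - \zeta(x)|}{d(x,y)},
$$
bounding the second factor by $\sqrt{N+1}$ and using $\zeta(y) \to \zeta(x)$ as $y \to x$ to estimate the $\limsup$ of the first factor by $\Lip[h](\zeta(x))$.

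The main step is the complementary lower bound $\lip[h \circ \zeta](x) \geq \lip[h](\zeta(x))$, which is where the inequality $|\zeta(y) - \zeta(x)| \geq d(x,y)$ enters.  Observe that if $b = \zeta(y) \in \bar{B}(\zeta(x), r)$, then $d(x,y) \leq |b - \zeta(x)| \leq r$, so $y \in \bar{B}(x, r)$; hence $\bar{B}(\zeta(x), r) \cap \zeta(X) \subseteq \zeta(\bar{B}(x, r))$.  This yields
$$
L(h \circ \zeta;\, x, r) \;\geq\; \sup_{b \in \bar{B}(\zeta(x), r) \cap \zeta(X)} \frac{|h(b) - h(\zeta(x))|}{r}
$$
for every $r > 0$, and passing to the $\liminf$ gives the bound on $\lip$, with $\lip[h](\zeta(x))$ read (as in Lemma \ref{lemma_MDSpushfwd} and Proposition \ref{prop_euclconverse}) as the pointwise lower Lipschitz constant of $h$ relative to the support $\zeta(X)$ of $\zeta_\#\mu$.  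Dividing the two estimates yields the asserted ratio inequality.  The subtle point is that the single appended coordinate must simultaneously supply the lower comparison needed to transfer $\lip$ from $\R^{N+1}$ back to $X$ while adding only one to the dimension, which is what makes the constant $\sqrt{N+1}$ rather than something larger.
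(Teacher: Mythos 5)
Your proof is correct and captures exactly the paper's key idea: the appended coordinate $d(x,\cdot)$ forces the two-sided comparison $d(x,y) \leq |\zeta(y)-\zeta(x)| \leq \sqrt{N+1}\,d(x,y)$, so $\zeta^{-1}(\bar{B}(\zeta(x),r)) \subseteq \bar{B}(x,r)$ transfers $\lip$ upward (with the RHS read relative to $\zeta(X)$, as you correctly note), while the trivial Lipschitz bound on $\zeta$ transfers $\Lip$ downward with the factor $\sqrt{N+1}$. The paper organizes the same sandwich estimate via coordinate cubes $Q_{N+1}(\zeta(x),\rho)$, aiming for the exact identity $\zeta(B(x,\rho)) = \zeta(X) \cap Q_{N+1}(\zeta(x),\rho)$ and the two-sided equality $\lip[h\circ\zeta](x) = \lip[h](\zeta(x))$, but your ball-based one-sided inclusion is all the lemma actually requires and sidesteps the cube-versus-ball calibration step.
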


To fix notation, open cubes in $\R^n$, centered at $a = (a_1,\cdots,a_n)$, with edge length $\rho > 0$, and with faces orthogonal to the coordinate axes are denoted by
$$
Q_n(a,\rho) \;:=\; 
\left(a_1-\frac{\rho}{2},a_1+\frac{\rho}{2}\right) \times \cdots \times
\left(a_n-\frac{\rho}{2},a_n+\frac{\rho}{2}\right).
$$

\begin{proof}
As $\mu$ is Radon, assume $\mu(X) < \infty$. Fix a $\mu$-density point $x \in X$ and put
$$
g(y) \;:=\; d(x,y).
$$  
Since the components of $\xi$ are distance functions, for sufficiently small $\rho > 0$ the preimage of $Q_N(\xi(x),\rho)$ is a finite intersection of open annuli in $X$, each of thickness $\rho$, and hence a bounded open neighborhood of $\xi^{-1}(\{ \xi(x) \})$. It follows that
$$
\xi(B(x,\rho)) \;\subseteq\; Q_N(\xi(x),\rho)
$$
holds for all $x \in X$ and all $0 < \rho \leq \min\{ \xi_i(x) : 1 \leq i \leq N \}$ and hence
\begin{eqnarray*}
\zeta(B(x,\rho)) \;=\;
\xi(B(x,\rho)) \times g(B(x,\rho)) &\subseteq&
Q_N(\xi(x),\rho) \times [g(x)-\rho,g(x)+\rho] \\ &=&
Q_{N+1}(\zeta(x),\rho)
\end{eqnarray*}
On the other hand, for points $y \in \zeta^{-1}(Q_{N+1}(\zeta(x),\rho))$, it is clear that
$$
d(x,y) \;=\;
|g(y) - g(x)| \;\leq\;
|\zeta(y) - \zeta(x)| \;\leq\;
\rho
$$
which further implies the set inclusion
$$
\zeta^{-1}\big( Q_{N+1}(\zeta(x),\rho) \big) \;\subseteq\; B(x,\rho).
$$
It follows that the previous set inclusions reduce to an identity
\begin{equation} \label{eq_ballcube}
\zeta\big(B(x,\rho)\big) \;=\; \zeta(X) \cap Q_{N+1}(\zeta(x),\rho)
\end{equation}
and moreover, that $\zeta(x)$ is a $\zeta_\#\mu$-density point in $\R^{N+1}$.

By identifying $\R^N \times \{0\}$ with $\R^N$ and letting $\pi : \R^{N+1} \to \R^N$ denote orthogonal projection onto the first $N$ coordinates in $\R^{N+1}$, we see that
$$
\pi_\#(\zeta_\#\mu)(A) \;=\;
\zeta_\#\mu(A \times \R) \;=\;
\mu(\zeta^{-1}(A \times \R)) \;=\;
\mu(\xi^{-1}(A)) \;=\; \xi_\#\mu(A)
$$
holds, for all Borel sets $A \subset \R^N$, so by Borel regularity, we conclude that
\begin{equation} \label{eq_measpushdown}
\pi_\#(\zeta_\#\mu) \;=\; \xi_\#\mu.
\end{equation}
Letting $h \in \Lip(\R^{N+1})$ and $\e > 0$ be arbitrary, choose radii $(R_j)_{j=1}^\infty \searrow 0$ so that
\begin{eqnarray*}
\lip[h](\zeta(x)) \,+\, \e &\geq&
\liminf_{j \to \infty} \sup_{Q_{N+1}(\zeta(x),R_j)} \frac{|h - h(\zeta(x))|}{R_j} \\ &\geq&
\liminf_{j \to \infty} \sup_{\zeta(B(x,R_j))} \frac{|h - h(\zeta(x))|}{R_j} \\ &\geq&
\liminf_{j \to \infty} \sup_{B(x,R_j)} \frac{|h \circ \zeta - (h \circ \zeta)(x)|}{R_j} \;=\; 
\lip[h \circ \zeta](x)
\end{eqnarray*}
and choose radii $(r_j)_{j=1}^\infty \searrow 0$ and an index $j_0 \in \N$ so that, for all $j \geq j_0$, we have
\begin{eqnarray*}
\lip[h \circ \zeta](x) \,+\, \e &\geq&
\liminf_{j \to \infty} \sup_{B(x,r_j)} \frac{|h \circ \zeta - (h \circ \zeta)(x)|}{r_j} \\ &=&
\liminf_{j \to \infty} \sup_{\zeta(B(x,r_j))} \frac{|h - h(\zeta(x))|}{r_j} \\ &=&
\liminf_{j \to \infty} \sup_{\zeta(X) \cap Q_{N+1}(\zeta(x),r_j)} \frac{|h - h(\zeta(x))|}{r_j} \;\geq\;
\lip[h]( \zeta(x) ).
\end{eqnarray*}
Combining the above estimates and letting $\e \to 0$, it follows that
$$
\lip[h \circ \zeta](x) \;=\; \lip[h]( \zeta(x) )
$$
holds for all $h \in \Lip(\R^{N+1})$; the inequality
$$
\Lip[h \circ \zeta](x) \;\leq\; 
L(\zeta) \Lip[h](\zeta(x)) \;\leq\;
\sqrt{N+1} \Lip[h](\zeta(x)),
$$
however, is straightforward.
\end{proof}

\subsection{Slicing the tangent measures}

The proof of our main result, Theorem \ref{thm_converse}, follows that of Proposition \ref{prop_euclconverse}.  Namely, the process of taking tangent measures on the new target $\R^{N_m+1}$ corresponds to a similar process on $\R^{N_m} \times \{0\}$.

\begin{proof}[Proof of Theorem \ref{thm_converse}]
We show (\ref{thm_converse}.A) $\Rightarrow$ (\ref{thm_converse}.B).
As $\mu$ is doubling and hence Radon, assume $\mu(X_m) < \infty$ for all $m \in \N$.  As usual, we suppress the index $m$, so $X = X_m$, $\xi = \xi^m$, etc.  Fix a $\mu$-density point $x \in X$ and put
$$
g(y) \;:=\; d(x,y).
$$  
Assume all the notation and background from the proof of Lemma \ref{lemma_pullback}, so in particular, we write $\zeta(y) := (\xi(y),g(y))$.  Points in $\R^{N+1}$ are denoted as pairs
$$
(a,s) \in \R^N \times \R.
$$
For every $f \in \Lip(X)$, Lemma \ref{lemma_pullback} applies to the auxiliary function
$$
h_f(a,s) \;:=\; {\bf D}f(x) \cdot a,
$$
so if $f$ is differentiable at $x$ with respect to $\xi$ in the sense of (\ref{defn_MDS}.A), then $h_f$ is differentiable at $\zeta(x)$ with differential
$$
{\bf v} \;:=\; [\zeta_\#{\bf D}]h_f(\zeta(x)) \;=\; \big( {\bf D}f(x), 0 \big)
$$
and with respect to the identity map on the subset
$$
\zeta(X) \;=\; \xi(X) \times g(X) \;\subseteq\; \R^N \times \R.
$$
Supposing that the Lip-lip condition fails on all of $X$ --- that is, for each $n \in \N$ there exist a subset $Y_n \subset X$ with $\mu(Y_n) > 0$ and $(f_n)_{n=1}^\infty$ in $\Lip(X)$ so that
$$
\Lip[{\bf D}f_n \cdot \xi] \;=\;
\Lip[f_n] \;>\; 
n \, \lip[f_n] \;=\;
n \, \lip[{\bf D}f_n \cdot \xi]
$$
holds $\mu$-a.e.\ on $Y_n$ --- then an analogous condition holds $\zeta_\#\mu$-a.e.\ on $\zeta(Y_n)$, i.e.
$$
\Lip[h_{f_n}] \;>\; n \, \lip[h_{f_n}].
$$
Without loss, suppose that the subsets $\{Y_n\}_{n=1}^\infty$ are nested under inclusion.

Verily, by (\ref{rmk_diffnotchart}.B) the same argument in Step ({\bf i}) of Proposition \ref{prop_euclconverse} applies, with dimension $N+1$, $A_n := \zeta(Y_n)$, and $h_n := h_{f_n}$, and with $\zeta_\#\mu$ in place of $\mu$.  Putting 
$$
(a,s) \;:=\; (\xi(x),0) \;=\; \zeta(x)
$$
and with the same abuse of notation for subsequences, there is a limit
$$
\frac{[\zeta_\#{\bf D}]h_n(a,0)}{|[\zeta_\#{\bf D}]h_n(a,0)|} \;\To\; {\bf w}(a) \in \R^{N+1} \, \text{ as } \, n \to \infty
$$
as well as thicknesses $\rho_n > 0$, constants $c_n$, and slabs 
$$
\sigma_n \;:=\; \left\{
(b,t) \in \R^N \times \R \,;\,
\left|{\bf w}(a) \cdot \big((b,t)-(a,0)\big)\right| \,\leq\, \frac{3K+1}{n} \rho_n
\right\}
$$
as initially given in \eqref{eq_slabmass} and where $K \geq 1$ comes from Remark \ref{rmk_lipderiv1side}.  As before in Step ({\bf ii}), there exists a weak-star limit of probability measures
$$
\nu_n \;:=\; c_n (T_{(a,0),\rho_n})_\#(\zeta_\#\mu) \;\wsto\; \nu
$$
where the tangent measure $\nu \in \Tan(\zeta_\#\mu, (a,0))$ satisfies $\nu(\R^{N+1} \setminus \sigma_n) = 0$ for all $n$.  By construction, moreover, we have that
$$
\frac{({\bf D}f_n(x),0)}{|{\bf D}f_n(x)|} \;=\; 
\frac{[\zeta_\#{\bf D}]h_n(a,0)}{|[\zeta_\#{\bf D}]h_n(a,0)|} \;\To\; 
{\bf w}(a) \;=:\;
(\hat{\bf w}(a), 0)
$$
for some $\hat{\bf w}(a) \in \R^N$.  This means that the coordinate hyperplane $\R^N \times \{0\}$ is orthogonal to the hyperplane $(a,0) + {\bf w}(a)^\perp$ in $\R^{N+1}$--- where the mass of $\nu$ is supported --- and the slabs $\sigma_M$ intersect it in lower-dimensional slabs of the form
$$
\hat\sigma_n \;:=\; \left\{
b \in \R^N \,;\, 
\left| \hat{\bf w}(a) \cdot (b-a) \right| \,\leq\, \frac{3K+1}{n} \rho_n
\right\}.
$$
The rigid motions of projection and translation are almost commutative:
$$
\pi \circ T_{(a,0),\rho_n} \;=\; T_{a,\rho_n} \circ \pi.
$$
From the above identity and \eqref{eq_measpushdown} the probability measures $\hat\nu_n := \pi_\#\nu_n$  are supported in $\xi(X) \subset \R^N$ and obey
\begin{eqnarray*}
\hat\nu_n \;=\;
\pi_\#\nu_n &=& 
c_n \pi_\#(T_{(a,0),\rho_n})_\#(\zeta_\#\mu) \\ &=&
c_n (T_{a,\rho_n})_\#(\pi_\#(\zeta_\#\mu)) \;=\;
c_n (T_{a,\rho_n})_\#(\xi_\#\mu).
\end{eqnarray*}
Take a convergent subsequence and call the limit $\hat\nu$; in particular, $\hat\nu \in \Tan(\xi_\#\mu,a)$.

With the sub-slabs $\{\hat\sigma_n\}_{n=1}^\infty$ in place of the $\{\sigma_n\}_{n=1}^\infty$, an analogous argument as in Step ({\bf iii}) of Proposition \ref{prop_euclconverse} shows that $\hat\nu$ must be supported in the hyperplane $a + \hat{\bf w}(a)^\perp$ in $\R^N$, so the rank of $\U(\R^N,\hat\nu)$ is at most $N-1$.

This, of course, contradicts Theorem \ref{thm_tanderiv} and Lemma \ref{lemma_MDSpushfwd}.  As a result, the set $\bigcap_{n=1}^\infty Y_n$ must have zero $\mu$-measure.  Subdividing the chart $X = X_m$ into the subsets $Z_n := Y_n \setminus Y_{n+1}$, putting $M_n := n\sqrt{N+1}$ and invoking Lemma \ref{lemma_pullback}, then as in the final step of the proof of Proposition \ref{prop_euclconverse}, we see that with lifted coordinates $\zeta = \zeta^m$
$$
\frac{\Lip[f](x)}{\lip[f](x)} \;\leq\;
\frac{ \sqrt{N_m+1} \Lip[h_f](\zeta^m(x))}{ \lip[h_f](\zeta^m(x)) } \;\leq\; n\sqrt{N+1} \;=\; M_m
$$
holds for $\mu$-a.e.\ $x \in Z_n$, where now $N := \sup_n N_n < \infty$.
\end{proof}

Similarly to Proposition \ref{prop_euclconverse}, observe that Theorem \ref{thm_converse} is not a quantitative statement.  In the case of an $N$-dimensional MDS on $(X,d,\mu)$, for $N \leq 2$, the main result from \cite{Gong_rigidity} asserts that pushforwards of doubling measures $\mu$ on $X$ enjoy absolute continuity with Lebesgue measure, that is:\
$$
\xi^m_\#\mu \;\ll\; \H^{N_m}.
$$
Lemma \ref{lemma_euclLiplip} then applies, so each $\xi^m(X_m)$ satisfies a Lip-lip condition with constant $M = 1$.  Corollary \ref{cor_converse} follows with the same (remaining) argument as Theorem \ref{thm_converse}.  

\section{Appendix: Differentiability with minimal hypotheses} \label{sect_bate}

Inspired by Bate's result \cite{Bate} we now present an independent proof of Theorem \ref{thm_bate}, as well as a new characterisation of measurable differentiable structures on general metric spaces, without any additional assumptions on the underlying Radon measure.  The latter result is stated below, and generalises Theorem \ref{thm_MDSchar}.

\begin{prop} \label{prop_MDSderivs}
Let $X = (X,d)$ be a metric space and let $\mu$ be a Radon measure on $X$.  Then $(X,d,\mu)$ supports a non-degenerate measurable differentiable structure if and only if both of the following conditions hold:\
\begin{enumerate}
\vspace{.05in}
\item[(\ref{prop_MDSderivs}.A)] 
the measure $\mu$ is pointwise doubling, i.e.\ for $\mu$-a.e.\ $x \in X$, it holds that
$$
\limsup_{r \to 0} \frac{\mu(B(x,2r))}{\mu(B(x,r))} \;<\; \infty
$$
\item[(\ref{prop_MDSderivs}.B)]
there exist a collection of $\mu$-measurable subsets $\{X_l\}_{l=1}^\infty$ of $X$ and sequences $\{N_l\}_{l=1}^\infty \subseteq \N$ and $\{K_l\}_{l=1}^\infty \subseteq [1,\infty)$ so that for each $m \in \N$, there is a basis ${\bf d}^l = (\d_i^l)_{i=1}^{N_l}$ in $\U(X_m,\mu)$ so that for all $f \in \Lip(X)$, the local-to-global inequality \eqref{eq_lipderiv} holds $\mu$-a.e.\ on $X_l$.
\end{enumerate}
\end{prop}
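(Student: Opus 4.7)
\textbf{Forward direction ((MDS) $\Rightarrow$ (\ref{prop_MDSderivs}.A)+(\ref{prop_MDSderivs}.B)).}
Assume the atlas $\{(X_m,\xi^m)\}_{m=1}^\infty$. Condition (\ref{prop_MDSderivs}.A) is supplied directly by Lemma \ref{lemma_MDSptdoubling}. For (\ref{prop_MDSderivs}.B), I set $X_l := X_m$ and $\d_i^l := {\bf D}^m_i$. The local-to-global inequality \eqref{eq_lipderiv} holds on any MDS by \cite[Lem 5.1]{Gong_diffstruct} (cf.\ Remark \ref{rmk_lipderiv}), so the only nontrivial point is the weak-star continuity of ${\bf D}^m_i$, which in Theorem \ref{thm_MDSchar} was the unique place where global doubling was used. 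Pointwise doubling suffices: Radon regularity lets me cover $X_m$ up to a $\mu$-null set by a countable family of balls on which the doubling ratio is uniformly bounded, and on each such ball the Lipschitz partition-of-unity approximation of \cite[p.\ 1908]{Franchi:Hajlasz:Koskela} used in Lemma \ref{lemma_pushfwd} applies verbatim. Weak-star continuity follows by assembling these local statements through the locality Lemma \ref{lemma_locality}.

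\textbf{Backward direction ((\ref{prop_MDSderivs}.A)+(\ref{prop_MDSderivs}.B) $\Rightarrow$ MDS).}
Assume the basis ${\bf d}^l = (\d_i^l)_{i=1}^{N_l}$ on $X_l$ with \eqref{eq_lipderiv}. After further measurable decomposition via Lemma \ref{lemma_jacobian}, I may assume ${\bf d}^l$ is orthogonal with respect to some fixed Lipschitz coordinate $\xi^l : X \to \R^{N_l}$ whose Jacobian ${\bf d}^l\xi^l$ is $\mu$-a.e.\ nonsingular on $X_l$; such $\xi^l$ can be selected from a countable dense family in $\Lip_b(X)$ by a pigeonhole argument on the non-vanishing of the $(N_l \times N_l)$-minors. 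For each $f \in \Lip(X)$ the candidate differential ${\bf D}^l f : X_l \to \R^{N_l}$ is then defined as the unique $\mu$-measurable vectorfield solving
$$
\big({\bf d}^l\xi^l\big)(x) \cdot {\bf D}^l f(x) \;=\; {\bf d}^l f(x)
\qquad \text{for } \mu\text{-a.e.\ } x \in X_l.
$$
To realise these as actual differentials in the sense of (\ref{defn_MDS}.A), I want to show $\Lip[f - {\bf D}^l f(x) \cdot \xi^l](x) = 0$ at $\mu$-a.e.\ $x$. Formally this follows by plugging $g_x := f - {\bf D}^l f(x) \cdot \xi^l$ into \eqref{eq_lipderiv}, since ${\bf d}^l g_x(x) = 0$ by construction.

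\textbf{Main obstacle and resolution.}
The difficulty is that the $\mu$-null exceptional set in \eqref{eq_lipderiv} depends on the test function, whereas $g_x$ itself varies with $x$. My plan is a countable-density argument:\ fix a countable dense $Q \subset \R^{N_l}$ and apply \eqref{eq_lipderiv} to each $f_v := f - v \cdot \xi^l$ for $v \in Q$, to obtain a single full-measure subset $X_l^\circ \subseteq X_l$ on which \eqref{eq_lipderiv} holds simultaneously for every $v \in Q$. Intersect $X_l^\circ$ with the full-measure set of Lebesgue points of $x \mapsto {\bf D}^l f(x)$; the latter set has full $\mu$-measure precisely by the pointwise doubling hypothesis (\ref{prop_MDSderivs}.A), which guarantees the Lebesgue differentiation theorem $\mu$-a.e. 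For such $x$, choose $v_j \in Q$ with $v_j \to {\bf D}^l f(x)$, apply \eqref{eq_lipderiv} to each $f_{v_j}$, and pass to the limit using that $v \mapsto \Lip[f_v](x)$ and $v \mapsto |{\bf d}^l f_v(x)|$ are both Lipschitz in $v$ (with constants $L(\xi^l)$ and $|{\bf d}^l\xi^l(x)|$ respectively, the latter finite by \eqref{eq_derivop}). This yields $\Lip[g_x](x) \leq K_l |{\bf d}^l g_x(x)| = 0$, and hence the MDS with atlas $\{(X_l,\xi^l)\}_{l=1}^\infty$. This limiting step is the only place where (\ref{prop_MDSderivs}.A) enters essentially, and it is the delicate point of the whole argument.
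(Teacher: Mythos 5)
Your forward direction matches the paper's in outline: both invoke Lemma \ref{lemma_MDSptdoubling} for (\ref{prop_MDSderivs}.A), both observe that the only role of doubling in Theorem \ref{thm_MDSchar} was weak-star continuity of ${\bf D}^m_i$, and both replace it by a decomposition into pieces of locally bounded doubling ratio plus Lipschitz partitions of unity as in \cite{Franchi:Hajlasz:Koskela}. Your sketch is less precise than the paper's (which routes through Lemma \ref{lemma_ptdoubling} and spells out the two places doubling enters in \cite{Franchi:Hajlasz:Koskela}), but the idea is the same. Your backward direction, however, takes a genuinely different route from the paper's: rather than passing to $\e$-nets and piecewise-distance approximations \eqref{eq_piecewise} and deferring to \cite[Thm 1.6]{Gong_diffstruct}, you solve the linear system $({\bf d}^l\xi^l){\bf D}^l f = {\bf d}^l f$ and then run a countable-density argument over $v \in Q \subset \R^{N_l}$, exploiting the $v$-Lipschitz dependence of $\Lip[f - v\cdot\xi^l](x)$ and of $|{\bf d}^l(f - v\cdot\xi^l)(x)|$. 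This part is sound and is a pleasantly direct way to surmount the function-dependent null set.

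The real gap is the selection of $\xi^l$. You propose to pick $\xi^l$ "from a countable dense family in $\Lip_b(X)$," but $\Lip_b(X)$ is never norm-separable when $X$ is infinite (already $\Lip_b([0,1])$ contains an uncountable $1$-separated set), so no such family exists; and Lemma \ref{lemma_jacobian}, which you cite to orthogonalise ${\bf d}^l$, is stated and proved only on $\R^n$, not on general metric spaces, so it cannot be invoked here directly. What actually works, and what the paper does via Corollary \ref{cor_distcoords}, is to take the countable family to consist of distance functions $d(\cdot,y)$ for $y$ in a countable dense subset of $X$ (or an $\e$-net), and then argue that these suffice because the piecewise-distance approximations $f_\e$ converge weak-star to $f$ and derivations are weak-star continuous. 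Separability of $X$ — hence existence of the countable dense set — comes precisely from the pointwise doubling hypothesis (\ref{prop_MDSderivs}.A) through Lemma \ref{lemma_ptdoubling}. This also shows that your attribution of (\ref{prop_MDSderivs}.A) to "Lebesgue points of $x \mapsto {\bf D}^l f(x)$" is off: your density argument needs only that ${\bf D}^l f(x)$ is a finite vector at $\mu$-a.e.\ $x$, which requires no differentiation theorem; the place where (\ref{prop_MDSderivs}.A) genuinely earns its keep is the coordinate selection you skipped. You should also say a word about uniqueness of ${\bf D}^l f$, which follows by the same countable-density device applied to $w \cdot \xi^l$ together with nonsingularity of ${\bf d}^l\xi^l$.
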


%
%

We begin by noting that the Vitali covering theorem (and hence the Lebesgue differentiation theorem) also holds for pointwise doubling measures $\mu$ on $X$.  Indeed, similarly as in \cite[p.\ 45]{Bate} one subdivides $X$ into countably many subsets
\begin{equation} \label{eq_doublingpieces}
X_{m,n} \;:=\;
\{
x \in X \,;\,
\mu(B(x,2r)) \;\leq\; 2^n \mu(B(x,r)) \text{ for all } r \in (0,2^{-m})
\}
\end{equation}
for $n,m \in \N$; indeed, by hypothesis almost every $x \in X$ satisfies
$$
\limsup_{r \to 0} \frac{\mu(B(x,2r))}{\mu(B(x,r))} \;\leq\; 2^{n-1}
$$
for some $n \in \N$, so $x \in X_{n,m}$ holds for sufficiently large $m \in \N$.  The same proofs for doubling measures therefore apply to $X_{n,m}$ and hence to $X$.


In a similar spirit, the next result asserts that metric spaces supporting pointwise doubling measures are countable unions of subsets, each of which is a doubling metric space, in the sense of (\ref{rmk_doubling}.B).  

\begin{lemma} \label{lemma_ptdoubling}
If $\mu$ is Radon and pointwise doubling on a metric space $X$, then there is a collection of $\mu$-measurable subsets $\{Z_{n,l}\}_{n,l=1}^\infty$ of $X$ with $\mu(X \setminus \bigcup_{n,l} Z_{n,l}) = 0$ and where, for each $(n,l) \in \N \times \N$,
\begin{enumerate}
\item[(\ref{lemma_ptdoubling}.A)] the subset $Z_{n,l}$ is $N$-doubling in the sense of (\ref{rmk_doubling}.A), for some $N \in \N$;
\item[(\ref{lemma_ptdoubling}.B)] the restricted measure $\mu\lfloor_{Z_{N,L}}$ satisfies the doubling condition \eqref{eq_doubling} for all radii $r \in (0,2^{-l})$ with constant $\kappa = 2^n$.
\end{enumerate}
\end{lemma}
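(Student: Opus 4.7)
The plan is to start from the local-doubling decomposition already furnished by \eqref{eq_doublingpieces} and refine each piece via a Lebesgue density argument so that the restricted measure is doubling in its own right. First I would apply the pointwise doubling hypothesis to write $X$ as a countable union $\bigcup_{n_0,l_0} X_{n_0,l_0}$, modulo a $\mu$-null set, where each $X_{n_0,l_0}$ satisfies $\mu(B(x,2r)) \leq 2^{n_0}\mu(B(x,r))$ for all $x \in X_{n_0,l_0}$ and $r \in (0, 2^{-l_0})$. As observed in the paragraph following \eqref{eq_doublingpieces}, the Lebesgue differentiation theorem holds on each such $X_{n_0,l_0}$, since the Vitali covering property is available for locally doubling measures.

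Next I would refine each $X_{n_0,l_0}$ by a density-point argument. Applying Lebesgue differentiation to $\chi_{X_{n_0,l_0}}$ gives $\mu(X_{n_0,l_0}\cap B(x,r))/\mu(B(x,r)) \to 1$ for $\mu$-a.e.\ $x \in X_{n_0,l_0}$. Partitioning $X_{n_0,l_0}$ by the scale below which this ratio first exceeds $\tfrac{1}{2}$ produces a countable measurable cover by subsets on which the density of $X_{n_0,l_0}$ is uniformly at least $\tfrac{1}{2}$ for every $r$ in some range $(0,2^{-l_1})$. A second application of the same density partition to each such piece -- now yielding density of that piece in itself rather than of the ambient $X_{n_0,l_0}$ -- followed by a diagonal enumeration of the indices, produces a final collection $\{Z_{n,l}\}$ covering $X$ up to null, on which
$$
\mu(Z_{n,l} \cap B(x,r)) \;\geq\; \tfrac{1}{2}\,\mu(B(x,r))
$$
holds for every $x \in Z_{n,l}$ and every $r \in (0,2^{-l})$. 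Chaining this with the inherited bound $\mu(B(x,2r)) \leq 2^{n-1}\mu(B(x,r))$ (where $n = n_0+1$ absorbs the density factor of $2$) then yields condition (\ref{lemma_ptdoubling}.B):
$$
\mu\lfloor_{Z_{n,l}}\!(B(x,2r)) \;\leq\; \mu(B(x,2r)) \;\leq\; 2^{n-1}\mu(B(x,r)) \;\leq\; 2^n\,\mu\lfloor_{Z_{n,l}}\!(B(x,r)).
$$

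Condition (\ref{lemma_ptdoubling}.A) then follows from (B) via the standard observation recorded in (\ref{rmk_doubling}.B): a measure satisfying a genuine doubling condition at sufficiently small scales with constant $\kappa$ forces the underlying metric space to be $N$-doubling with some $N = N(\kappa) \in \N$, by a straightforward packing argument.

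The principal obstacle is the density iteration itself. A single density partition only controls the density of the ambient $X_{n_0,l_0}$ inside the refined pieces, whereas (B) demands control of the density of each refined piece inside itself. Handling this via a second density refinement plus a diagonal enumeration, while arranging that the final doubling constants collapse exactly to the prescribed $2^n$ (rather than to $2^{n+C}$ for some constant loss), is the most delicate step of the proof.
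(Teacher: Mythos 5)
The paper's own proof establishes only part (\ref{lemma_ptdoubling}.A) and explicitly defers part (\ref{lemma_ptdoubling}.B) to Bate's Lemma 8.3; the argument for (\ref{lemma_ptdoubling}.A) is a single Lebesgue--density refinement of the pieces $X_{n,m}$ from \eqref{eq_doublingpieces}, followed by a stratification in the density scale $r(x)$.  Your proposal inverts this structure: you attempt a direct proof of (\ref{lemma_ptdoubling}.B) and then derive (\ref{lemma_ptdoubling}.A) from it via the packing argument of (\ref{rmk_doubling}.B).  Deriving (A) from (B) is a perfectly valid and arguably cleaner route, so there is nothing to object to on that score.

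The gap is in the proposed proof of (\ref{lemma_ptdoubling}.B).  You correctly identify the obstruction --- a density refinement of $X_{n_0,l_0}$ controls the density of the \emph{ambient} $X_{n_0,l_0}$ in the refined piece, not the density of the refined piece in itself --- but the proposed fix does not close the circle.  After a second density partition of a first-stage piece $Y$, one obtains subpieces $Y'$ on which $\mu(Y \cap B(x,r)) \geq \tfrac12\mu(B(x,r))$ for small $r$: that is, density of $Y$ in $Y'$.  What \eqref{eq_doubling} for $\mu\lfloor_{Y'}$ requires is a lower bound on $\mu(Y' \cap B(x,r))$, and the second refinement delivers no such bound, only a bound on the intermediate set.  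Each iteration shifts the control one level back but never lands on the final piece, and a finite chain of refinements therefore never terminates in the desired self-density; diagonal enumeration does not repair this.  This is precisely the subtlety the paper avoids by invoking Bate's Lemma 8.3, whose proof uses additional structure (boundedness and a restriction trick tying the two levels together) rather than bare iteration.  To complete your route, you would need to supply that argument --- or cite it --- rather than appeal to a second refinement.

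One smaller remark: the chained estimate $\mu\lfloor_{Z_{n,l}}(B(x,2r)) \leq 2^{n-1}\mu(B(x,r)) \leq 2^n\mu\lfloor_{Z_{n,l}}(B(x,r))$ is the right shape once the self-density bound is in hand, and the derivation of (\ref{lemma_ptdoubling}.A) from (\ref{lemma_ptdoubling}.B) by packing is standard and correct (provided one either restricts to small radii or first cuts $Z_{n,l}$ into bounded pieces, as in \eqref{eq_doublingpieces}).
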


In particular, (\ref{lemma_ptdoubling}.B) is precisely \cite[Lem 8.3]{Bate}, so we prove only (\ref{lemma_ptdoubling}.A).

\begin{proof}
Let $X_{n,m}$ be the subsets defined in \eqref{eq_doublingpieces}; without loss, assume that each has positive $\mu$-measure.  By the Lebesgue differentiation theorem for $f = \chi_{X_{n,m}}$, almost every $x \in X_{n,m}$ therefore satisfies
$$
\frac{\mu(B(x,r) \cap X_{n,m})}{\mu(B(x,r))} \;\geq\; \frac{1}{2}
$$
for small enough $r = r(x) > 0$.  So by subdividing each $X_{n,m}$ into further subsets
$$
\{ x \in X_{n,m} \setminus X_{n+1,m} \,;\, 2^{-l-1} \,\leq\, r(x) \,<\, 2^{-l}\}
$$
and re-indexing as necessary, the lemma follows.
\end{proof}

To prove Proposition \ref{prop_MDSderivs}, we will use the necessity of the pointwise doubling condition, which has already been established by Bate and Speight \cite[Cor 2.6]{Bate:Speight}. 

\begin{lemma}[Bate-Speight] \label{lemma_MDSptdoubling}
Let $X = (X,d)$ be a metric space with a locally finite Borel measure $\mu$.  If $(X,d,\mu)$ supports a nondegenerate measurable differentiable structure, then $\mu$ must be pointwise doubling.
\end{lemma}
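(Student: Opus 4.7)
The plan is to argue by contradiction using the tangent-derivation formalism of Section \ref{sect_derivs}. Suppose $\mu$ fails to be pointwise doubling on a Borel set $E$ with $\mu(E)>0$, and seek to contradict the existence of a chart $(Y,\xi)$ of the MDS with $\mu(E\cap Y)>0$ and $\dim\xi = N_Y \geq 1$, which is guaranteed by non-degeneracy. I would first refine $E$ to a Borel subset $E'\subseteq E\cap Y$ of positive $\mu$-measure on which the following hold simultaneously: the differentiability property (\ref{defn_MDS}.A) is in force for a countable dense family $\mathcal{F}\subseteq \Lip(X)$; a Borel-measurable selection of scales $r_j(x)\searrow 0$ witnesses $\mu(B(x,r_j(x)))/\mu(B(x,r_j(x)/2))\to\infty$; and every point of $E'$ is a $\mu$-density point of $E'$. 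Because the full Vitali machinery is unavailable without doubling, this refinement depends on Egorov cutoffs together with the Radon property of $\mu$.

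Next I would push forward along $\xi$ and examine tangent measures of $\xi_\#\mu$ at $a:=\xi(x)$ along the bad subsequence. Since $\xi(B(x,r_j))\subseteq B(a,L(\xi)r_j)$ while $\xi(B(x,r_j/2))$ carries $\xi_\#\mu$-mass at most $\mu(B(x,r_j/2))$, the rescalings $\nu_j := c_j(T_{a,r_j})_\#\xi_\#\mu$, normalized to be probability measures on a ball of fixed radius, assign asymptotically zero mass to a neighborhood of $0$. Any weak-$*$ subsequential limit $\nu\in\Tan(\xi_\#\mu,a)$ therefore satisfies $0\notin\mathrm{spt}(\nu)$. Applying Theorem \ref{thm_tanderiv} and Corollary \ref{cor_tanderivrank} to the pushforward basis furnished by Lemma \ref{lemma_MDSpushfwd} produces a derivation module of full rank $N_Y\geq 1$ over $L^\infty(\R^{N_Y},\nu)$, and the goal is to read off a contradiction from $\mathrm{spt}(\nu)$ missing the origin.

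The main obstacle is converting the geometric fact $0\notin\mathrm{spt}(\nu)$ into an algebraic collapse of the derivation module, since \emph{a priori} nothing rules out rank-$N_Y$ derivations on a measure whose support avoids a point. The idea is to exploit the distinguished role of the origin as the image of a chart-density point: the local-to-global inequality \eqref{eq_lipderiv} must persist through the blow-up (by tracking it carefully through the push-forward/tangent construction in Theorem \ref{thm_tanderiv}), so at the origin any basis derivation would be forced to have non-trivial action on coordinate functions arbitrarily close to $0$, contradicting the estimate \eqref{eq_derivop} applied on balls disjoint from $\mathrm{spt}(\nu)$. A more elementary alternative would avoid tangent measures entirely and apply Remark (\ref{rmk_lipderiv1side}.B) directly to the distance functions $f_z(y):=d(z,y)$: from the annular concentration of $\mu$ one would extract a sequence of base points $z\to x$ whose differentials ${\bf D}f_z(x)\in\R^{N_Y}$ would need to point in mutually incompatible directions, violating uniqueness of the chart differential.
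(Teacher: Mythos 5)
There is no internal proof in the paper to compare against: Lemma \ref{lemma_MDSptdoubling} is imported verbatim from Bate--Speight, and the paper only cites their result. Judged on its own merits, your proposal has genuine gaps. The central one is the pushforward step. You claim that $\xi(B(x,r_j/2))$ ``carries $\xi_\#\mu$-mass at most $\mu(B(x,r_j/2))$''; the inequality goes the other way, since $\xi_\#\mu\bigl(\xi(B(x,r_j/2))\bigr)=\mu\bigl(\xi^{-1}(\xi(B(x,r_j/2)))\bigr)\ge\mu(B(x,r_j/2))$, and because $\xi$ is emphatically \emph{not} assumed injective (a point the paper stresses), the preimage of a small ball about $a=\xi(x)$ can pick up mass from parts of $X$ far from $x$. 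Consequently the failure of pointwise doubling of $\mu$ at $x$ leaves no visible trace in $\xi_\#\mu$ at $a$: indeed, by the Besicovitch differentiation theory every Radon measure on $\R^{N}$ is asymptotically doubling at almost every point, so for $\xi_\#\mu$-a.e.\ $a$ every tangent measure contains $0$ in its support, and no contradiction can be extracted from the doubling behaviour of the pushforward or of its tangents. Even if one granted $0\notin\mathrm{spt}(\nu)$, there is no ``algebraic collapse'' to be had: derivations act on $L^\infty(\nu)$ and are local (Lemma \ref{lemma_locality}), so a rank-$N_Y$ module over a measure supported in, say, an annulus is perfectly consistent, and applying \eqref{eq_derivop} on balls disjoint from $\mathrm{spt}(\nu)$ yields nothing — you correctly identify this as the main obstacle but do not close it. There is also a circularity worry: Lemma \ref{lemma_MDSpushfwd} is proved in the paper for doubling $\mu$ (and its extension in the appendix already uses the pointwise doubling you are trying to establish), so it cannot be invoked when the hypothesis is only a locally finite Borel measure.

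Your ``more elementary alternative'' is much closer to how the result is actually proved by Bate and Speight: one shows that on a differentiability space porous sets are null, via the distance function to a set $E$, which vanishes identically on $E$ (so its difference quotients along $E$ vanish, forcing the putative differential to be degenerate at density points) yet has $\Lip$ bounded below at points where nearby holes of size comparable to the scale exist; failure of pointwise doubling is then shown to produce such porosity-type behaviour on a set of positive measure. As written, though, your version of this step is only a gesture: annular concentration of $\mu$ around $x$ does not by itself produce base points $z\to x$ whose differentials ${\bf D}d(z,\cdot)(x)$ point in ``incompatible directions,'' and uniqueness of the measurable differential is an almost-everywhere statement about the vector field, not something one contradicts at a single point without a quantitative construction. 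So the proposal, in both variants, falls short of a proof.
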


\begin{proof}[Proof of Proposition \ref{prop_MDSderivs}]
($\Leftarrow$) Let $X_l$ be one of the subsets from Condition (\ref{prop_MDSderivs}.B) with positive $\mu$-measure.  Since $\mu$ is pointwise doubling on $X$, the Lebesgue differentiation theorem implies that $\mu\lfloor_{X_l}$ is also pointwise doubling.  
So by taking $\e$-nets $Y_{m,l}^\e$ of each $Y_{m,l}$, for all $\e > 0$, as well as ``piecewise-distance'' approximations \cite[Defn 4.1]{Gong_diffstruct} of each $f \in \Lip(X)$, defined as
\begin{equation} \label{eq_piecewise}
f_\e(x) \;:=\; \inf\{ f(y) + L(f) \, d(x,y) \,;\, y \in Y^\e_{m,l} \},
\end{equation}
the remainder of the proof follows that of Theorem \ref{thm_MDSchar} in \cite[p.\ 21-23]{Gong_diffstruct}.

\vspace{.05in}

($\Rightarrow$) Assume that $(X,d,\mu)$ supports an MDS; without loss, $X$ is a single chart with coordinates $\xi : X \to \R^N$.  Applying Lemma \ref{lemma_MDSptdoubling}, let $\{Y_{n,l}\}_{n,l=1}^\infty$ be the collection of subsets from Lemma \ref{lemma_ptdoubling}.  Assume that each $Y_{n,l}$ has positive $\mu$-measure, so each $(Y_{n,l},d,\mu)$ also supports an MDS.  By Remark \ref{rmk_lipderiv}, inequality \eqref{eq_lipderiv} holds $\mu$-a.e.\ on $Y_{n,l}$ with the same differential map $f \mapsto {\bf D}f$.

To show that the components of $f \mapsto {\bf D}f$ are derivations, the same argument as in the proof of \cite[Thm 1.6]{Gong_diffstruct} uses only functional analytic techniques, so it runs as before with one modification:\ the doubling condition was used to invoke \cite[Thm 9]{Franchi:Hajlasz:Koskela}, which asserts that for $p \geq 1$ the Haj{\l}asz-Sobolev space $M^{1,p}(X,\mu)$ is contained in $H^{1,p}(X,\mu)$, the completion of the linear space of functions
$$
\tilde{H}^{1,p}(X,\mu) \;:=\;
\{ f \in {\Lip}_{\rm loc}(X) \cap L^p(X,\mu) \,;\, |{\bf D}f| \in L^p(X,\mu) \}
$$
with respect to the norm 
$$
\|f\|_{H^{1,p}(X,\mu)} \;:=\; \|f\|_{L^p(X,\mu)} \,+\, \| |{\bf D}f| \|_{L^p(X,\mu)}.
$$
However, a close reading of that proof shows that the  doubling condition for $\mu$ is used only in two cases:
\begin{itemize}
\item in \cite[p.\ 1908]{Franchi:Hajlasz:Koskela} the doubling space property (\ref{rmk_doubling}.B) of $X$ is used to obtain coverings of $X$ by balls of small uniform radius $\e > 0$ and uniformly bounded overlap, which in turn gives rise to approximations via Lipschitz partitions of unity.  Lemma \ref{lemma_countdoubling} can therefore be used for each $Z_{n,l} \cap Y_m$;
\vspace{.05in}
\item the estimates in \cite[p.\ 1916-1918]{Franchi:Hajlasz:Koskela} only use the covering balls of fixed radius $\e > 0$, as before, so Lemma \ref{lemma_ptdoubling} applies instead: $\mu\lfloor_{Y_{n,l}}$ is doubling for sufficiently small radii, so it suffices to take $\e \in (0,2^{-l})$.
\end{itemize}
This settles the remaining implication.
\end{proof}

\begin{rmk}
For doubling measures, Lemma \ref{lemma_doublingrank} is used to fix a dimension bound for measurable differentiable structures on the underlying space.  This is not always possible for the non-doubling case, however, and therefore not needed (or used) for the proof of Proposition \ref{prop_MDSderivs}.

As an example, fix an infinite-dimensional Hilbert space $H$ with an orthonormal basis $({\bf e}_i)_{i=1}^\infty$ and write ${\bf 0} \in H$ for the zero element.  For the subsets
$$
X_m \;:=\; [0,1] \times \R^{m-1} \times \{ {\bf 0} \}
$$
the union $X := \bigcup_{m=1}^\infty (X_m + m{\bf e}_1)$ supports an MDS with $N = \infty$ and with respect to the sum of $m$-dimensional Hausdorff measures
$$
d\mu \;:=\; \sum_{m=1}^\infty \chi_{X_m}\,d\mathcal{H}^m
$$
which is pointwise doubling in the sense of (\ref{prop_MDSderivs}.A) but fails \eqref{eq_doubling}.  It is moreover clear that each $\U(X_m,\mu)$ has rank-$m$.
\end{rmk}

Before proceeding to Theorem \ref{thm_bate}, we will need a more general version of (\ref{rmk_doubling}.C).  An alternate argument can be found in \cite[Cor 6.28]{Schioppa}.

\begin{cor} \label{cor_distcoords}
If $(X,d,\mu)$ supports a measurable differentiable structure, then there is an atlas $\{(X_m,\xi^m)\}_{m=1}^\infty$ on $X$ so that each chart coordinate $\xi^m : X \to \R^{N_m}$ consists of distance functions.
\end{cor}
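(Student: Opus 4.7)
The plan is to combine the derivation-theoretic characterization of Proposition \ref{prop_MDSderivs} with the locally doubling decomposition from Lemma \ref{lemma_ptdoubling}, and then apply a localized version of Schioppa's distance-coordinate construction \cite[Cor 6.30]{Schioppa}.

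First, because $(X,d,\mu)$ supports an MDS, Lemma \ref{lemma_MDSptdoubling} forces $\mu$ to be pointwise doubling, and Lemma \ref{lemma_ptdoubling} then decomposes $X$, modulo a $\mu$-null set, into countably many Borel pieces $\{Y_{n,l}\}_{n,l \geq 1}$ on which $\mu\lfloor_{Y_{n,l}}$ satisfies the doubling condition \eqref{eq_doubling} with constant $2^n$ for all radii at most $2^{-l}$. Starting from any atlas $\{(X_m,\xi^m)\}_{m\geq 1}$ provided by the MDS hypothesis, I would refine it by intersecting each chart $X_m$ with every $Y_{n,l}$, producing a countable atlas in which each chart is contained in a single piece $Y_{n,l}$; after re-indexing we may therefore assume $\mu\lfloor_{X_m}$ itself is doubling on all balls of some uniform positive radius.

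Next, on each such chart $X_m$ I would replace the original coordinates $\xi^m$ by distance functions. By Proposition \ref{prop_MDSderivs}, take a basis of derivations ${\bf d} = (\d_i)_{i=1}^{N_m}$ for $\U(X_m,\mu)$ satisfying the local-to-global inequality \eqref{eq_lipderiv}. For $\mu$-a.e.\ density point $x_0 \in X_m$, I would select $N_m$ points $z_1(x_0), \ldots, z_{N_m}(x_0) \in X$ so that the matrix $\big[\d_i \, d(\cdot, z_j(x_0))\big](x_0)$ is nonsingular; by Lemma \ref{lemma_jacobian}, this implies that the tuple $(d(y,z_j(x_0)))_{j=1}^{N_m}$ spans the derivation module near $x_0$, and combined with \eqref{eq_lipderiv} it serves as a coordinate chart in the sense of Definition \ref{defn_MDS}. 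A standard Borel selection against a countable dense subset of $X$ then yields a countable partition of $X_m$ into subsets, on each of which a single fixed $N_m$-tuple of base points defines distance-function coordinates.

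The main obstacle is the selection step: verifying that for $\mu$-a.e.\ $x_0$ one can choose $N_m$ base points whose distance-function gradients (in the derivation sense) are linearly independent at $x_0$. For doubling measures this is Schioppa's technical core and proceeds via a Vitali-type density argument, which invokes the doubling hypothesis only through local density estimates. Since the Vitali covering and Lebesgue differentiation theorems both hold for pointwise doubling measures (as noted before Lemma \ref{lemma_ptdoubling}), and since all balls involved can be taken of radius at most $2^{-l}$ within each $Y_{n,l}$, Schioppa's argument adapts with only notational changes.
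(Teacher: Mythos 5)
Your proposal has a genuine gap at precisely the step you flag as ``the main obstacle.'' The whole content of the corollary is showing that distance-function differentials span the derivation module $\mu$-a.e., and appealing to ``Schioppa's argument adapts with only notational changes'' is not a proof of that---especially since the paper explicitly frames this corollary as the extension of Remark (\ref{rmk_doubling}.C) beyond the doubling setting, so appealing to \cite[Cor 6.30]{Schioppa} (stated for doubling measures) would be circular in spirit. Moreover, your invocation of Lemma \ref{lemma_jacobian} is a misapplication: that lemma concerns a tuple of linearly independent derivations in $\U(\R^n,\mu)$ and the identity coordinates $x_1,\dots,x_n$, and its conclusion (non-singularity of $[\d_i x_k]$) runs in the opposite direction from what you need. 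It does not say that a non-singular matrix $[\d_i\, d(\cdot,z_j)](x_0)$ for distance functions on a general metric space implies that $(d(\cdot,z_j))_{j}$ is a valid coordinate chart in the sense of Definition \ref{defn_MDS}; that implication is exactly the ``change of variables'' content that needs an argument.

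The paper's proof avoids this selection step entirely. It works with the piecewise-distance approximations $f_\e$ from \eqref{eq_piecewise}: each $f_\e$ is, on a countable partition $\{E_n^j\}$, equal to a constant plus a distance function, so $\d_i^m f_{\e_j}$ is an $L^\infty$-linear combination of $\{\d_i^m d_{y_n^j}\}$. Weak-star continuity of $\d_i^m$ together with a Mazur-lemma argument (as in Step (\textbf{II}) of Lemma \ref{lemma_MDSpushfwd}) then shows $\d_i^m f$ itself is an $L^\infty$-linear combination of $\{\d_i^m d_{y_n^j}\}_{j,n}$, and only at that point does the finite linear-algebra extraction and change-of-variables argument (from \cite[Lem 2.12, Thm 3.2]{Gong_diffstruct}) come in. In short: the paper derives the spanning property from the structure of piecewise-distance approximations, whereas you try to \emph{select} a full-rank tuple of base points directly, which is the harder and here unproved step. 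Your decomposition into pieces $Y_{n,l}$ and the final Borel-selection/re-indexing are sound scaffolding, but they do not close the gap.
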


\begin{proof}
Assume all the notation from the proof of Proposition \ref{prop_MDSderivs}.

For $(\e_j)_{j=1}^\infty$ in $\R^+$ with $\e_j \to 0$, the functions from \eqref{eq_piecewise} satisfy $f_{\e_j} \wsto f$ in $\Lip_b(X)$, so by weak continuity we obtain
$\d_i^mf_{\e_j} \wsto \d_i^mf$
in $L^\infty(Y_{m,l},\mu)$ for each $i$, $m$.  In particular, enumerating $Y^{\e_j}_{m,l} = \{y_n^j\}_{n=1}^\infty$ and putting
$$
E_n^j \,:=\, \{ x \in X \,;\, f_\e(x) = f(y_n^j) + L(f) \, d_{y_n^j}(x) \}
$$
the function $\d_i^mf_{\e_j}$ then takes the form
$$
\d_i^mf_{\e_j} \,=\, 
L(f) \sum_{n=1}^\infty \chi_{E_n^j} \d_i^md_{y_n^j}.
$$
Similarly as Step ({\bf II}) in the proof of Lemma \ref{lemma_MDSpushfwd}, a Mazur's lemma argument shows that $\d_i^mf$ is an $L^\infty(X,\mu)$-linear combination of the functions $\{\d_i^md_{y_n^j}\}_{j,n=1}^\infty$.  The rest of the proof follows with the same linear algebra argument as in the proof of \cite[Lem 2.12]{Gong_diffstruct} as well as the ``change of variables'' trick in the proof of \cite[Thm 3.2]{Gong_diffstruct}.
\end{proof}

We conclude with an outline of the modifications to the proof of Theorem \ref{thm_converse}, so that Theorem \ref{thm_bate} follows:

\begin{proof}[Sketch of Proof for Theorem \ref{thm_bate}]
Assume $(X,d,\mu)$ has a nondegenerate MDS, so $\mu$ is pointwise doubling by Lemma \ref{lemma_MDSptdoubling}.
Lemma \ref{lemma_pullback} applies to this setting, since Corollary \ref{cor_distcoords} implies the existence of an atlas on $X$ with distance functions as coordinates $\xi^m$ on each chart $X_m$.

The proof of Lemma \ref{lemma_MDSpushfwd} also relies on distance functions as coordinates to ensure that $\xi^m_\#\mu$ is locally finite, so Corollary \ref{cor_distcoords} also applies here in place of the doubling condition.  The only other use of doubling comes from Lemma \ref{lemma_pushfwd}, which uses the fact that doubling measures satisfy Vitali's Covering Theorem and are used to build Lipschitz partitions of unity, as from \cite{Franchi:Hajlasz:Koskela}. The first property follows from the use of the subsets $X_{n,m}$ in \eqref{eq_doublingpieces}; for the second, the same observation as for ($\Rightarrow$) in Proposition \ref{prop_MDSderivs} works.

The remainder of the proof of Theorem \ref{thm_converse} only uses differentiability, pointwise Lipschitz constants, and Theorem \ref{thm_tanderiv} and Proposition \ref{prop_euclconverse}, which only require the underlying measure to be Radon, so the argument runs as before.
\end{proof}

\bibliographystyle{alpha}
\bibliography{lip}
\end{document}